\newtheorem{proposition}{Proposition}
\newtheorem{corollary}{Corollary}
\newtheorem{remark}{Remark}
\newtheorem{definition}{Definition}
\newtheorem{theorem}{Theorem}
\newtheorem{lemma}{Lemma}
\newenvironment{proof}[1][Proof:]{\begin{trivlist}
\item[\hskip \labelsep {\bfseries #1}]}{\end{trivlist}}
\newcommand{\id}{{\mathbf 1}}
\newcommand{\supp}{\operatorname{supp}}
\newcommand{\diag}{\mbox{diag}}
\newcommand{\Hom}{\operatorname{Hom}}
\newcommand{\Aut}{\mbox{\normalfont Aut}}
\newcommand{\ad}{\mbox{\normalfont ad}}
\newcommand{\End}{\mbox{\normalfont End}}
\newcommand{\tr}{\mbox{tr}}
\newcommand{\Der}{\mbox{\normalfont Der}}
\newcommand{\aA}{{{\scriptstyle{}^{\scriptstyle a}}\!\!\mathcal{A}}}
\newcommand{\F}{\mathbb{F}}
\newcommand{\Fc}{{\overline{\mathbb{F}}}}
\newcommand{\Fp}{{\mathbb{F}_{\scriptscriptstyle\!\!/\!p}}}
\newcommand{\Dil}{\mbox{\normalfont Dil}}
\renewcommand*\env@matrix[1][*\c@MaxMatrixCols c]{%
  \hskip -\arraycolsep
  \let\@ifnextchar\new@ifnextchar
  \array{#1}}
\title{Jordanable almost Abelian Lie algebras}
\author{Zhirayr Avetisyan\\ Department of Mathematics, UCSB}
\begin{document}
\maketitle
\abstract{We call a linear operator on a vector space over a field Jordanable if it has a Jordan canonical form. A Lie algebra will be called Jordanable if its adjoint representation is by Jordanable operators. The existence of canonical form allows to describe important structural properties of these Lie algebras in explicit terms. Lie subalgebras, ideals, automorphisms and derivations, as well as quadratic Casimir elements are given explicitly in a suitable basis.}




\section{Introduction}\label{Intro}

An almost Abelian Lie algebra is a non-Abelian Lie algebra $\mathbf{L}$ over a field $\F$ that contains a codimension one Abelian Lie subalgebra. Such an algebra can always be written as a semidirect product $\F e_0\rtimes\mathbf{V}$ of a one-dimensional Lie algebra $\F e_0$ with a codimension one Abelian ideal $\mathbf{V}$. Among the most prominent representatives of this class are the Heisenberg algebra $\mathbf{H}_\F$, the algebra of generators of affine transformations on the real line $\mathbf{ax+b}_\F$, and the algebra of Killing vector fields on the plane $\mathbf{E}_\F(2)$. In a recent paper \cite{Avetisyan2016} we motivated the introduction of general almost Abelian Lie algebras, studied their structure and obtained a classification up to isomorphism. More precisely, it was shown that these isomorphism classes correspond to the projective similarity classes of the linear operators $\ad_{e_0}\in\End(\mathbf{V})$. In order to obtain more explicit formulae that are important in practice we need to fix a concrete representative from each class, and to work as far as possible with projective similarity invariants of linear operators. This paper is devoted to those almost Abelian Lie algebras for which the operator $\ad_{e_0}$ admits a Jordan canonical form, i.e., is similar to a possibly infinite direct sum of ordinary finite dimensional Jordan blocks. We call such algebras Jordanable and use the convenient canonical forms to solve explicitly some operator equations appearing in the more general results of \cite{Avetisyan2016}. Projective multiplicity functions are introduced as invariant signatures of Jordanable almost Abelian Lie algebras, and all results are expressed in terms of them.

The paper is structured as follows. In Section \ref{FldSpMultF} we establish notation and remind the reader of some elementary facts about fields, polynomials and algebraic extensions. We introduce multiplicity functions and a few operations with them. Section \ref{JordOp} is devoted to Jordanable linear operators on a given vector space. First we discuss conventional Jordan blocks corresponding to various roots $x_*$ and algebraic extensions $\F(x_*)$, and introduce flexible notation for switching between $\F$-linear and $\F(x_*)$-linear objects. A Jordan canonical form $\operatorname{J}(\aleph)$ corresponding to a multiplicity function $\aleph$ is defined as the direct sum of different Jordan blocks with multiplicities given by $\aleph$. Note that no restriction is posed on the cardinality of these multiplicities. A linear operator is defined to be Jordanable if it is similar to a Jordan canonical form. A Jordanable operator can be similar to many Jordan canonical forms, but all these forms differ only by the order of Jordan blocks and have the same multiplicity function, therefore a multiplicity function is a unique similarity invariant in the class of Jordanable operators. The operation of multiplication of a multiplicity function by a scalar introduced in Section \ref{FldSpMultF} is used to establish a bijective correspondence between projective classes of multiplicity functions and projective similarity classes of Jordanable operators.

In Section \ref{InvSubspJordOp} the invariant vector subspaces of a given Jordanable operator are described in terms of a basis adapted to the structure of the multiplicity function. It is shown that the restriction to an invarianr subspace of a Jordanable operator is again Jordanable. The Section \ref{LinEqJordOp} begins with certain technicalities regarding the passage from $\F$-matrices to $\F(x_*)$-matrices. Then we proceed to explicitly describe the solutions of the following operator equations: $\operatorname{X}\operatorname{T}=\lambda\operatorname{T}\operatorname{X}$ and $\operatorname{Y}\operatorname{T}-\operatorname{T}\operatorname{Y}=\operatorname{T}$ for a given $\lambda\in\F^*=\F\setminus\{0\}$ and Jordanable operator $\operatorname{T}$, as well as $\operatorname{Z}\operatorname{J}(\aleph)+\operatorname{J}(\aleph)^\top\operatorname{Z}=0$ for a given nonzero multiplicity function $\aleph$ in the particular (Jordan) basis. Finally, in Section \ref{JordAALieAlg} we introduce Jordanable almost Abelian Lie algebras as those for which the adjoint representation $\ad$ is by Jordanable operators. We show that isomorphism classes of Jordanable almost Abelian Lie algebras correspond to projective classes of multiplicity functions, and for a given multiplicity function $\aleph$ we choose a representative algebra $\aA(\aleph)$. The centre and the lower central series of $\aA(\aleph)$ are described, and it is shown that $\aA(\aleph)$ is nilpotent if and only if $\aleph$ is supported only at the zero root $x_*=0$. It is further shown that a Lie subalgebra of a Jordanable almost Abelian Lie algebra is again Jordanable, and all subalgebras and ideals of $\aA(\aleph)$ are given in terms of $\ad_{e_0}$-invariant subspaces. At the end explicit matrix representations are given for automorphisms $\Aut(\aA(\aleph))$, derivations $\Der(\aA(\aleph))$ and quadratic Casimir elements $Q(x)\in\mathcal{Z}(\mathrm{U}(\aA(\aleph)))$. The terminal Section \ref{Examples} demonstrates the utilization of the above results in studying three examples of concrete almost Abelian Lie algebras: the Bianchi Lie algebra $\mathrm{Bi}(\mathrm{VI}_1)$, the Lie algebra of the Mautner group (the lowest dimensional non-type-I Lie group), and the almost Abelian Lie algebra $\aA_\mathbb{Q}(1\times\sqrt[3]{2}\,^2,q\times0^1)$
over rationals. This choice of examples demonstrates a variety of different structures that trace different trajectories in the flow of arguments leading to the final results.





\section{Fields, spectra and multiplicity functions}\label{FldSpecMultF}

Let us start with establishing notations regarding fields and their spectra. Let $\F$ be a field of scalars, and $\F[X]$ the ring (actually $\F$-algebra) of polynomials $P(X)$ with coefficients in $\F$. For each polynomial $P=P(X)\in\F[X]$ we denote by $\deg P\in\mathbb{N}_0$ its degree. Let $\sigma_\F\subset\F[X]$ be the set of monic irreducible polynomials which is called the \textit{spectrum} of $\F$. It has a natural grading according to the degrees,
$$
\sigma_\F=\bigsqcup_{n\in\mathbb{N}}\sigma_{\F,n},\quad\sigma_{\F,n}\doteq\left\{p\in\sigma_\F|\quad\deg p=n\right\},\quad\forall n\in\mathbb{N}.
$$
For a $p\in\sigma_\F$ there two possibilities. Either $\deg p=1$ and then $p(X)=X-\mu$ for some $\mu\in\F$, or $\deg p>1$ and the equation $p(x_*)=0$ has no roots in $\F$. In the latter case the roots $x_1,...,x_{\deg p}$ are in the algebraic closure $\Fc$ of $\F$. In both cases for every $n\in\mathbb{N}$ the map
$$
\sigma_{\F,n}\ni p(X)=(X-x_1)...(X-x_n)\mapsto\{x_1,...,x_n\}\in\Fc^n/\mathfrak{S}_n,\quad n\in\mathbb{N}
$$
is injective and can be used to embed $\sigma_{\F,n}$ into $\Fc^n/\mathfrak{S}_n$ (here $\mathfrak{S}_n$ is the symmetric group on $n$ symbols).

Let $\F^*\doteq\F\setminus\{0\}$ be the group of invertible elements of $\F$. Define the following action of $\F^*$ on $\F[X]$,
$$
[\lambda\star P](X)=\lambda^{\deg P}P(\frac{X}{\lambda}),\quad\forall\lambda\in\F^*.
$$
\begin{remark}\label{DilInvRemark}The following properties of this action are easily verified,
$$
P(X)=\sum_{k=0}^{\deg P}a_kX^k\quad\Leftrightarrow\quad\lambda\star P(X)=\sum_{k=0}^{\deg P}\lambda^{\deg P-k}a_kX^k,
$$
$$
P(x_*)=0\quad\Leftrightarrow\quad\lambda\star P(\lambda x_*)=0,
$$
$$
\lambda\star(P(X)Q(X))=\lambda\star P(X)\lambda\star Q(X),\quad\forall P(X),Q(X)\in\F[X],\quad\forall\lambda\in\F^*.
$$
\end{remark}
\begin{proposition} For every $\lambda\in\F^*$ and $n\in\mathbb{N}$ we have $\lambda\star\sigma_{\F,n}\subset\sigma_{\F,n}$. More precisely, $\F^*$ acts on $\sigma_{\F,n}\subset\Fc^n/\mathfrak{S}_n$ according to its natural action on $\Fc^n$ by dilations, $\lambda\{x_1,...,x_n\}=\{\lambda x_1,...,\lambda x_n\}$.
\end{proposition}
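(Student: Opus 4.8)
The plan is to treat $\star$ first as a genuine action of the group $\F^*$ and then read off both assertions from the properties collected in Remark \ref{DilInvRemark}. First I would record that $\star$ is an action: the first property of Remark \ref{DilInvRemark} shows that the coefficient of $X^{\deg P}$ in $\lambda\star P$ is $\lambda^{0}a_{\deg P}=a_{\deg P}$, so $\lambda\star$ preserves both the degree and the leading coefficient; in particular it sends monic polynomials of degree $n$ to monic polynomials of degree $n$. Combined with the multiplicativity $\lambda\star(PQ)=(\lambda\star P)(\lambda\star Q)$ and the identities $1\star P=P$ and $(\lambda\mu)\star P=\lambda\star(\mu\star P)$, which are immediate from the definition, this exhibits $\star$ as an action of $\F^*$ on $\F[X]$ by degree- and leading-coefficient-preserving multiplicative bijections; in particular $\lambda^{-1}\star$ is a two-sided inverse of $\lambda\star$.

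Next I would establish that irreducibility is preserved, which together with the previous step yields the inclusion $\lambda\star\sigma_{\F,n}\subset\sigma_{\F,n}$. Suppose $p\in\sigma_{\F,n}$ but $\lambda\star p=QR$ with $\deg Q,\deg R\geq 1$. Applying $\lambda^{-1}\star$ and using multiplicativity gives $p=(\lambda^{-1}\star Q)(\lambda^{-1}\star R)$; since $\lambda^{-1}\star$ preserves degrees, both factors still have positive degree, contradicting the irreducibility of $p$. Hence $\lambda\star p$ is monic, of degree $n$, and irreducible, i.e.\ $\lambda\star p\in\sigma_{\F,n}$ (and, since $\lambda\star$ is invertible, the inclusion is in fact a bijection).

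Finally, for the sharper statement about roots I would compute the image of a factored polynomial directly. If $p(X)=(X-x_1)\cdots(X-x_n)$ with $x_i\in\Fc$, then $\lambda\star p(X)=\lambda^{n}p(X/\lambda)=\prod_{i=1}^{n}\lambda\bigl(\tfrac{X}{\lambda}-x_i\bigr)=\prod_{i=1}^{n}(X-\lambda x_i)$; equivalently, this is precisely the content of the second property of Remark \ref{DilInvRemark}, namely $p(x_*)=0\Leftrightarrow(\lambda\star p)(\lambda x_*)=0$. Thus the multiset of roots of $\lambda\star p$ is $\{\lambda x_1,\dots,\lambda x_n\}$, so under the embedding $\sigma_{\F,n}\hookrightarrow\Fc^{n}/\mathfrak{S}_n$ the operation $\lambda\star$ corresponds exactly to the dilation $\{x_1,\dots,x_n\}\mapsto\{\lambda x_1,\dots,\lambda x_n\}$, as claimed.

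I do not anticipate a serious obstacle here; the only points demanding care are the bookkeeping that $\lambda\star$ is invertible, so that a factorization of $\lambda\star p$ can be pulled back to contradict irreducibility, and the check that the root computation reproduces the chosen embedding rather than introducing a spurious scalar factor. Both are settled by the degree- and leading-coefficient-preservation noted in the first step.
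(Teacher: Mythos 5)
Your proposal is correct and follows essentially the same route as the paper: degree, monicity, and the dilation of roots are read off from Remark \ref{DilInvRemark}, and irreducibility is preserved by pulling back a putative factorization of $\lambda\star p$ via $\lambda^{-1}\star$ to contradict the irreducibility of $p$. Your extra bookkeeping (verifying the group-action axioms and computing $\lambda\star p=\prod_i(X-\lambda x_i)$ explicitly) only makes explicit what the paper leaves as clear from the remark.
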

\begin{proof} That $\F^*$ acts by dilations of the set of roots $\{x_1,...,x_n\}$ is already clear from Remark \ref{DilInvRemark}. We only need to show that $p\in\sigma_{\F,n}$ entails $\lambda\star p\in\sigma_{\F,n}$. That $\deg\lambda\star p=n$ and that $\lambda\star p$ is monic follows again from Remark \ref{DilInvRemark}. Suppose that $\lambda\star p(X)=Q(X)R(X)$ for some $Q,R\in\F[X]$ with $\deg Q,\deg R>0$, i.e., $\lambda\star p$ is reducible. $\star$-multiplying both sides by $\lambda^{-1}$ we get $p(X)=\lambda^{-1}\star Q(X)\lambda^{-1}\star R(X)$ by Remark \ref{DilInvRemark} which contradicts the irreducibility of $p$. $\Box$
\end{proof}
For a subset $\sigma\subset\sigma_\F$ the action of $\F^*$ is defined as pointwise, and the orbit
$$
\F^*\star\sigma\doteq\left\{\sigma'\subset\sigma_\F|\quad\sigma'=\lambda\star\sigma,\quad\lambda\in\F^*\right\}
$$
is the set of all possible subsets of the same cardinality related to $\sigma$ by a uniform dilation.

We proceed to introduce multiplicity functions. Let $\mathcal{C}$ be the class of cardinals as identified with their representative sets up to isomorphism.
\begin{definition} An $\mathbb{N}$-graded multiplicity function is a map $\aleph:\sigma_\F\times\mathbb{N}\mapsto\mathcal{C}$.
\end{definition}
Denote by
$$
\supp\aleph\doteq\left\{p\in\sigma_\F|\quad\aleph(p,\cdot)\not\equiv0\right\}
$$
the \textit{support} of $\aleph$ and by
$$
\dim_\F\aleph\doteq\sum_{p\in\sigma_\F}\sum_{n=1}^\infty n\aleph(p,n)\deg p=\sum_{p\in\supp\aleph}\sum_{n=1}^\infty n\aleph(p,n)\deg p
$$
its \textit{dimension}. The action of $\F^*$ on multiplicity functions can be defined by the pushforward,
$$
[\lambda\star\aleph](\lambda\star p,n)=\aleph(p,n),\quad\forall p\in\sigma_\F,\quad\forall n\in\mathbb{N},\quad\forall\lambda\in\F^*.
$$
\begin{proposition} The $\F^*$-action on multiplicity functions $\aleph$ has the following properties,
$$
\supp\lambda\star\aleph=\lambda\star\supp\aleph,\quad\dim_\F\lambda\star\aleph=\dim_\F\aleph,\quad\forall\lambda\in\F^*.
$$
\end{proposition}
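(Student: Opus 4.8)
The plan is to reduce both identities to a single structural observation: that $p\mapsto\lambda\star p$ is a bijection of $\sigma_\F$ onto itself that preserves the degree grading. First I would record this fact. By the preceding Proposition $\lambda\star\sigma_{\F,n}\subset\sigma_{\F,n}$ for every $n$, and since $p\mapsto\lambda\star p$ and $p'\mapsto\lambda^{-1}\star p'$ are mutually inverse (the formula defines a group action of $\F^*$), this inclusion is in fact an equality and the map is a degree-respecting bijection of $\sigma_\F$; in particular $\deg(\lambda\star p)=\deg p$, which is also immediate from the explicit formula in Remark \ref{DilInvRemark}. Rewriting the defining relation $[\lambda\star\aleph](\lambda\star p,n)=\aleph(p,n)$ with $p$ replaced by $\lambda^{-1}\star p'$ then yields the equivalent pulled-back form
$$
[\lambda\star\aleph](p',n)=\aleph(\lambda^{-1}\star p',n),\quad\forall p'\in\sigma_\F,\ \forall n\in\mathbb{N},
$$
which I would note simultaneously confirms that $\lambda\star\aleph$ is well defined on all of $\sigma_\F$.

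For the support identity I would simply chase the definition through this pulled-back form: $p'\in\supp\lambda\star\aleph$ means $[\lambda\star\aleph](p',n)\neq0$ for some $n$, hence $\aleph(\lambda^{-1}\star p',n)\neq0$ for some $n$, i.e. $\lambda^{-1}\star p'\in\supp\aleph$, which after applying $\lambda\star$ is exactly $p'\in\lambda\star\supp\aleph$. For the dimension identity I would reindex the defining sum along the bijection $p\mapsto\lambda\star p$: writing $p'=\lambda\star p$, as $p$ ranges over $\sigma_\F$ so does $p'$, so that
$$
\dim_\F\lambda\star\aleph=\sum_{p\in\sigma_\F}\sum_{n=1}^\infty n\,[\lambda\star\aleph](\lambda\star p,n)\,\deg(\lambda\star p)=\sum_{p\in\sigma_\F}\sum_{n=1}^\infty n\,\aleph(p,n)\,\deg p=\dim_\F\aleph,
$$
where the middle equality uses the defining relation together with the degree preservation $\deg(\lambda\star p)=\deg p$ recorded above.

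The one point requiring care, and the mildest of obstacles, is that the entries $\aleph(p,n)$ are cardinals and the displays above are therefore cardinal sums; the reindexing is legitimate only because a cardinal sum depends solely on the multiset of its summands and is insensitive to a bijective relabelling of the index set. This is the sole place where the argument appeals to general cardinal arithmetic rather than to a finite manipulation, and once it is invoked both identities follow formally from the bijectivity and degree-preservation of $\lambda\star$.
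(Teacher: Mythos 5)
Your proposal is correct and follows essentially the same route as the paper's own proof: both reindex along the bijection $p\mapsto\lambda\star p$ of $\sigma_\F$, apply the defining pushforward relation $[\lambda\star\aleph](\lambda\star p,n)=\aleph(p,n)$, and invoke $\deg\lambda\star p=\deg p$ from Remark \ref{DilInvRemark}. Your only additions are to spell out points the paper leaves implicit --- that $\lambda\star$ is invertible (with inverse $\lambda^{-1}\star$) and that cardinal sums are invariant under bijective relabelling --- which are sound but do not change the argument.
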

\begin{proof} To justify the first statement write
$$
\supp\lambda\star\aleph=\left\{p\in\sigma_\F|\quad[\lambda\star\aleph](p,\cdot)\not\equiv0\right\}=\left\{\lambda\star p|\quad[\lambda\star\aleph](\lambda\star p,\cdot)\not\equiv0,\quad p\in\sigma_\F\right\}=
$$
$$
\left\{\lambda\star p|\quad\aleph(p,\cdot)\not\equiv0,\quad p\in\sigma_\F\right\}=\left\{\lambda\star p|\quad p\in\supp\aleph\right\}=\lambda\star\supp\aleph.
$$
For the second statement we observe that
$$
\dim_\F\lambda\star\aleph=\sum_{p\in\sigma_\F}\sum_{n=1}^\infty n[\lambda\star\aleph](p,n)\deg p=\sum_{p\in\sigma_\F}\sum_{n=1}^\infty n[\lambda\star\aleph](\lambda\star p,n)\deg\lambda\star p=
$$
$$
\sum_{p\in\sigma_\F}\sum_{n=1}^\infty n\aleph(p,n)\deg\lambda\star p=\dim_\F\aleph,
$$
where we used $\deg\lambda\star p=\deg p$ from Remark \ref{DilInvRemark} in the last step. $\Box$
\end{proof}
We denote the orbit of the $\F^*$-action on $\aleph$ by $\F^*\star\aleph$ and call it a \textit{projective} multiplicity function. The action of $\F^*$ on the class of all multiplicity functions $\mathcal{C}^{\sigma_\F\times\mathbb{N}}$ is not regular in the sense that different orbits may have different ranks. In other words, the isotropy subgroups of $\F^*$ for different multiplicity functions may be different.
\begin{definition} A scalar $\lambda\in\F^*$ will be called a dilation symmetry of the multiplicity function $\aleph$ if $\lambda\star\aleph=\aleph$.
\end{definition}
\begin{remark} It is clear that the set $\Dil(\aleph)$ of all dilation symmetries of a given multiplicity function is the isotropy subgroup of $\F^*$ at $\aleph$.
\end{remark}

In low dimensions when there are finitely many nonzero entries $\{\aleph(p_i,n_i)\}_{i=1}^N$, $N\in\mathbb{N}$ in $\aleph$ the following notation may be more convenient in practice,
\begin{equation}
\aleph\leftrightarrow(m_1\times p_1^{n_1},...,m_N\times p_N^{n_N}),\quad m_1=\aleph(p_1,n_1),...,m_N=\aleph(p_N,n_N).\label{AlephShort}
\end{equation}





\section{Jordanable operators and their multiplicity functions}\label{JordOp}

In this section we are interested in linear operators $\mbox{T}$ on $\F$-vector space $\mathbf{V}$ which are Jordanable, i.e., admit a Jordan canonical form. We refer to Chapter XIV of \cite{Lang2002} for the relevant background and for the theory of finite dimensional operators. In particular, Corollary 2.5 states that every finite dimensional linear operator is Jordanable. The situation is not as clear in infinite dimensions, and below we will try to recover some of the familiar facts in this new realm.

For a given $p\in\sigma_\F$ let $x_p$ be any one root of $p$. Denote by $\Fp\doteq\F(x_p)$ the extension field, which regardless of the choice of the root $x_p$ is isomorphic to $\F[X]/p(X)\F[X]$. Then $[\Fp:\F]=\dim_\F\Fp=\deg p$ and $\Fp\simeq\F^{\deg p}$ as an $\F$-vector space. The left action of $\Fp$ on itself defines the embedding $\Fp\subset\End_\F(\Fp)\simeq\End_\F(\F^{\deg p})$, and the element $x_p\in\Fp$ regarded as an $\F$-matrix takes the form
\begin{equation}
x_p=\begin{pmatrix}
0 & 0 & \ldots & 0 & -a_0\\
1 & 0 & \ldots & 0 & -a_1\\
0 & 1 & \ldots & 0 & -a_2\\
\ldots & \ldots & \ldots & \ldots & \ldots\\
0 & 0 & \ldots & 1 & -a_{\deg p-1}
\end{pmatrix},\label{xpMatrixForm}
\end{equation}
which can be found on page 557 of \cite{Lang2002}. However, in the special case $\F=\mathbb{R}$ it is customary to adopt the form
\begin{equation}
x_p=\begin{pmatrix}
a & -b\\
b & a
\end{pmatrix},\quad p(X)=X^2+a_1X+a_0=(X-a)^2+b^2,\quad b\ge0\label{xpMatrixFormR}
\end{equation}
($b\ge0$ means that of the two complex roots we choose the one in the upper half plane), and in order to remain in contact with the literature on real Jordan forms we will consider this representation as well. We will choose (\ref{xpMatrixForm}) in general and will comment on those results in the sequel which would be affected had we given preference to (\ref{xpMatrixFormR}). In order to distinguish between two conventions we introduce the indicator
\begin{equation}
\epsilon=\begin{cases}
1\quad\mbox{if (\ref{xpMatrixForm}) is adopted,}\\
0\quad\mbox{if $\F=\mathbb{R}$ and (\ref{xpMatrixFormR}) is adopted.}
\end{cases}\label{epsilonDef}
\end{equation}
In both cases, for $\deg p=1$ when $p(X)=X+a_0$ we have $x_p=-a_0$.

Given an $\Fp$-vector space $\mathbf{V}$ we can consider it as an $\F$-vector space $\mathbf{V}\otimes_\F\Fp$ (tensor product of $\F$-vector spaces), so that $\Fp$-matrices $\operatorname{T}\in\End_\Fp(\mathbf{V})$ become $\F$-block-matrices $\operatorname{T}\in\End_\F(\mathbf{V}\otimes_\F\Fp)$ with $\Fp\subset\End_\F(\Fp)$-valued blocks. Hereafter we will freely switch between $\Fp$-matrices and $\F$-block matrices emphasizing the ground field whenever not obvious from the context. For instance, let us for $n\in\mathbb{N}$ denote by $\id_n$ the $n\times n$ identity matrix, where the ground field will be explicitly specified or clear from the context. If the field is $\Fp$ then as an $\Fp$-matrix $\id_n$ is $n\times n$, whereas as an $\F$-matrix it is the $n\deg p\times n\deg p$ matrix $\id_n\otimes\id_{\deg p}$. More generally, by tensor product $\operatorname{A}\otimes\operatorname{B}$ we will mean the block matrix obtained by multiplying blocks $\operatorname{B}$ with the entries of the $n\times m$ matrix $\operatorname{A}$,
\begin{equation}
\operatorname{A}\otimes\operatorname{B}=\begin{pmatrix}
A_{1;1}\operatorname{B} & A_{1;2}\operatorname{B} & \ldots & A_{1;m}\operatorname{B}\\
\ldots & \ldots & \ldots & \ldots\\
A_{n;1}\operatorname{B} & A_{n;2}\operatorname{B} & \ldots & A_{n;m}\operatorname{B}
\end{pmatrix}.\label{MatrixTensorProd}
\end{equation}

For every $p\in\sigma_\F$ and $n\in\mathbb{N}$ let $\mbox{\normalfont J}(p,n)$ be the $\Fp$-linear operator (Jordan block) on the vector space (block space) $\Fp^n$ given by the matrix form
\begin{equation}
\operatorname{J}(p,n)=x_p\id_n+\mathrm{N}_n\in\End_\Fp(\Fp^n),\label{JpnDef}
\end{equation}
where $x_p\in\Fp$ is as in (\ref{xpMatrixForm}) and $\mathrm{N}_n$ is the nilpotent $\Fp$-matrix
$$
\mathrm{N}_n=\begin{pmatrix}
0 && 1 && 0 && ... && 0\\
0 && 0 && 1 && ... && 0\\
&& && ... && &&\\
0 && 0 && 0 && ... && 1\\
0 && 0 && 0 && ... && 0
\end{pmatrix}
$$
(note that our convention is the transpose of Theorem 2.4 in Chapter XIV of \cite{Lang2002}). 
Each such block $\Fp^n$ is an indecomposable $\Fp[\operatorname{J}(p,n)]$-module, i.e., it cannot be written as a direct sum of two $\operatorname{J}(p,n)$-invariant $\Fp$-vector subspaces.

Let $\operatorname{T}_i\in\End_\F(\mathbf{V}_i)$ for $i=1,2$. The operators $\operatorname{T}_1$ and $\operatorname{T}_2$ are called \textit{similar} (written $\operatorname{T}_1\sim\operatorname{T}_2$) if there exists an invertible linear operator (intertwiner) $\operatorname{S}:\mathbf{V}_1\to\mathbf{V}_2$ such that $\operatorname{S}\operatorname{T}_1=\operatorname{T}_2\operatorname{S}$. The two operators are called \textit{projectively} similar if $\lambda\operatorname{T}_1\sim\operatorname{T}_2$ for some $\lambda\in\F^*$.

\begin{proposition}\label{DilSimProp} For every $p,q\in\sigma_\F$, $m,n\in\mathbb{N}$ and $\lambda\in\F^*$, $\lambda\operatorname{J}(p,m)\sim\operatorname{J}(q,n)$ as $\F$-matrices if and only if $q=\lambda\star p$ and $n=m$.
\end{proposition}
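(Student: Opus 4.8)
The plan is to prove both implications by comparing similarity invariants: I would use the $\F$-characteristic polynomial for the necessity and two explicit block conjugations for the sufficiency. As a preliminary I would record the relevant characteristic polynomials. Written as an $\F$-block matrix, $\operatorname{J}(p,m)=\id_m\otimes x_p+\mathrm{N}_m\otimes\id_{\deg p}$ is block upper triangular with every diagonal block equal to the companion matrix $x_p$ of (\ref{xpMatrixForm}), whose $\F$-characteristic polynomial is $p$; hence $\det_\F(X\id-\operatorname{J}(p,m))=p(X)^m$. Scaling by $\lambda$ replaces the diagonal blocks by $\lambda x_p$, and since $\det_\F(X\id-\lambda x_p)=\lambda^{\deg p}p(X/\lambda)=[\lambda\star p](X)$ by Remark \ref{DilInvRemark}, I obtain $\det_\F(X\id-\lambda\operatorname{J}(p,m))=[\lambda\star p](X)^m$. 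I would also recall that $\lambda\star p$ is again monic irreducible of degree $\deg p$ (as shown earlier), and that $\det_\F(X\id-\operatorname{J}(q,n))=q(X)^n$.

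For necessity, similar $\F$-matrices share the same characteristic polynomial, so $[\lambda\star p]^m=q^n$ in $\F[X]$. Since $\lambda\star p$ and $q$ are both monic irreducible, unique factorization in $\F[X]$ forces $\lambda\star p=q$, and comparing degrees then gives $m=n$.

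For sufficiency, assume $q=\lambda\star p$ and $n=m$. The matrix $\lambda x_p$ has the irreducible characteristic polynomial $q$, so its minimal polynomial equals $q$; being cyclic, it is similar to the companion matrix $x_q$, say $S_0(\lambda x_p)S_0^{-1}=x_q$. Conjugating $\lambda\operatorname{J}(p,m)=\id_m\otimes(\lambda x_p)+\lambda\mathrm{N}_m\otimes\id_{\deg p}$ by $\id_m\otimes S_0$ turns the diagonal part into $\id_m\otimes x_q$ while fixing the nilpotent part $\lambda\mathrm{N}_m\otimes\id_{\deg p}$. I would then conjugate by $D\otimes\id_{\deg p}$ with $D=\diag(1,\lambda,\dots,\lambda^{m-1})$: this commutes with $\id_m\otimes x_q$ and rescales $\lambda\mathrm{N}_m$ to $\mathrm{N}_m$, producing $\id_m\otimes x_q+\mathrm{N}_m\otimes\id_{\deg p}=\operatorname{J}(q,m)$. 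Under the real convention (\ref{xpMatrixFormR}) the matrix $x_q$ is the rotation--scaling form rather than a companion matrix, but it is still the $\F$-matrix of multiplication by a root of $q$ and hence still similar to $\lambda x_p$ (for $\lambda<0$ one absorbs an extra $\diag(1,-1)$), so the argument is unaffected. The main obstacle I expect lies precisely here: establishing $\lambda x_p\sim x_q$ from irreducibility and, more delicately, decoupling the two rescalings, so that the diagonal conjugation acts at the block level as $D\otimes\id_{\deg p}$ and normalizes the superdiagonal while leaving the $x_q$-blocks intact.
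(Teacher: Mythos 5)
Your proof is correct, but it takes a genuinely different route from the paper's in both directions. For necessity the paper never computes a determinant: it evaluates annihilating polynomials, noting that $[\lambda\star p]^m$ annihilates $\lambda\operatorname{J}(p,m)$ and hence, by similarity, also $\operatorname{J}(q,n)$; if $\lambda\star p\neq q$ then $[\lambda\star p]^m$ and $q^n$ are coprime, and Bezout's identity forces a contradiction, after which equality of dimensions gives $m=n$. Your characteristic-polynomial argument (block-triangularity gives $\det_\F(X\id-\lambda\operatorname{J}(p,m))=[\lambda\star p](X)^m$, then unique factorization in $\F[X]$) reaches the same conclusion and is equally valid, arguably more standard. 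For sufficiency the paper argues abstractly: $\F[\lambda\operatorname{J}(p,m)]=\F[\operatorname{J}(p,m)]$ acts indecomposably on $\Fp^m$, so $\lambda\operatorname{J}(p,m)$, being finite dimensional and hence Jordanable, is similar to a single Jordan block, and the already-proved necessity identifies that block as $\operatorname{J}(\lambda\star p,m)$; the explicit intertwiner is deferred to Lemma \ref{lambdaJLemma}. Your two-step conjugation by $\id_m\otimes S_0$ and $D\otimes\id_{\deg p}$ constructs that intertwiner on the spot; it is essentially the operator $\operatorname{V}_n(\lambda^{-1})\otimes\operatorname{V}_{\deg p}(\lambda|\lambda|^{\epsilon-1})$ of Lemma \ref{lambdaJLemma}, with your $S_0$ left abstract where the paper's $\operatorname{V}_{\deg p}$ is explicit, and your Kronecker-product computations (conjugation fixing $\mathrm{N}_m\otimes\id_{\deg p}$, and $D\mathrm{N}_mD^{-1}=\lambda^{-1}\mathrm{N}_m$) are exactly right. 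What each approach buys: the paper's is shorter at this point and handles both conventions (\ref{xpMatrixForm}) and (\ref{xpMatrixFormR}) uniformly, since annihilators and indecomposability are basis-free, whereas yours delivers the explicit similarity immediately, at the cost of the separate treatment of $\epsilon=0$ (your $\diag(1,-1)$ correction for $\lambda<0$ is precisely the needed fix, and it is correct).
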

\begin{proof} First assume $\lambda\operatorname{J}(p,m)\sim\operatorname{J}(q,n)$ as $\F$-matrices. Then 
$$
[\lambda\star p]^m(\operatorname{J}(q,n))\sim[\lambda\star p]^m(\lambda\operatorname{J}(p,m))=p^m(\operatorname{J}(p,m))=0.
$$
But also $q^n(\operatorname{J}(q,n))=0$. If $\lambda\star p\neq q$ then $[\lambda\star p]^m$ and $q^n$ are mutually prime, and by Bezout theorem it follows that $\operatorname{J}(q,n)=0$ which is not true. Therefore $\lambda\star p=q$ and by Remark \ref{DilInvRemark} $\deg q=\deg p$. But similar matrices have the same dimensions, hence $m\deg p=n\deg q$ and thus $m=n$. Conversely, $\F[\lambda\operatorname{J}(p,m)]=\F[\operatorname{J}(p,m)]$ acts indecomposably on $\Fp^m$, therefore the matrix $\lambda\operatorname{J}(p,m)$ has only a single Jordan block, i.e., $\lambda\operatorname{J}(p,n)\sim\operatorname{J}(q,n)$. But then we have already shown that $q=\lambda\star p$ and $n=m$. $\Box$
\end{proof}
Later in Lemma \ref{lambdaJLemma} we will find the corresponding intertwiner explicitly.

\begin{definition} A canonical form $\operatorname{J}(\aleph)\in\End_\F(\F^{\dim_\F\aleph})$ associated with a multiplicity function $\aleph$ is a linear operator with matrix form
\begin{equation}
\operatorname{J}(\aleph)=\bigoplus_{p\in\sigma_\F}\bigoplus_{n=1}^\infty\bigoplus_{\aleph(p,n)}\operatorname{J}(p,n).\label{CanFormDef}
\end{equation}
\end{definition}
Note that this definition is intrinsically ambiguous: with no a priori order on the set $\sigma_\F$, different orders of direct sum will in general yield different matrix forms. One can fix this freedom by introducing an arbitrary order on $\sigma_\F$. But we will not need to do so as we will be interested in canonical forms only up to similarity. Different orders in the direct sum of blocks correspond to different choices of basis in the same vector space. A change of basis is a similarity transformation, and all possible orders of summands fall into the same similarity class. Thus, if we denote by $[\operatorname{T}]$ the similarity class containing the linear operator $\operatorname{T}$, the map $\aleph\mapsto[\operatorname{J}](\aleph)$ from multiplicity functions to similarity classes is well defined.

\begin{definition} A linear operator $\operatorname{T}\in\End_\F(\mathbf{V})$ on an $\F$-vector space $\mathbf{V}$ will be called Jordanable if there exists a multiplicity function $\aleph_{\operatorname{T}}$ such that $\operatorname{T}\sim\operatorname{J}(\aleph_{\operatorname{T}})$.
\end{definition}
In this case $\aleph_{\operatorname{T}}$ is referred to as the multiplicity function of $\operatorname{T}$, and $\operatorname{J}(\aleph_{\operatorname{T}})$ is called a Jordan canonical form of $\operatorname{T}$. Below we make sure that $\aleph_{\operatorname{T}}$ is unique by showing how to extract it from $\operatorname{T}$.

\begin{proposition}\label{MultFromOpProp} For a Jordanable operator $\operatorname{T}$, $\forall p\in\sigma_\F$, $\forall n\in\mathbb{N}$  we have
$$
\aleph_{\operatorname{T}}(p,n)=\dim_\Fp\ker p^n(\operatorname{T})/p(\operatorname{T})\ker p^{n+1}(\operatorname{T})-\dim_\Fp\ker p^{n-1}(\operatorname{T})/p(\operatorname{T})\ker p^n(\operatorname{T}).
$$
\end{proposition}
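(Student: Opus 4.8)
The plan is to exploit that every quantity in the statement is a similarity invariant, so I may replace $\operatorname{T}$ by its canonical form $\operatorname{J}(\aleph_{\operatorname{T}})$. Writing $\aleph=\aleph_{\operatorname{T}}$ and recalling that $\operatorname{J}(\aleph)$ is block diagonal (cf. \eqref{CanFormDef}), for any polynomial $f$ the operator $f(\operatorname{J}(\aleph))$ is block diagonal as well; hence $\ker f(\operatorname{J}(\aleph))$, the images $p(\operatorname{J}(\aleph))\ker p^{n+1}(\operatorname{J}(\aleph))$, and the quotients in the statement all split as direct sums over the constituent blocks $\operatorname{J}(q,m)$ (each occurring $\aleph(q,m)$ times). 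Since $p(\operatorname{T})\ker p^{n+1}(\operatorname{T})\subseteq\ker p^n(\operatorname{T})$, the quotients are well defined, and it suffices to evaluate each $\Fp$-quotient on a single block and add up dimensions. The first reduction is to discard the blocks with $q\neq p$: because $q$ is the minimal polynomial of $x_q$ over $\F$ and $p\neq q$ is irreducible, $p(x_q)\neq 0$, so $p(\operatorname{J}(q,m))=p(x_q)\id_m+(\text{nilpotent})$ is a unit plus a nilpotent, hence invertible, and $\ker p^n(\operatorname{J}(q,m))=0$ for all $n$.

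For a block $\operatorname{J}(p,m)=x_p\id_m+\mathrm{N}_m$ I would expand $p(x_p\id_m+\mathrm{N}_m)$ in powers of the nilpotent $\mathrm{N}_m$ (a finite sum). The constant term is $p(x_p)\id_m=0$ and the coefficient of $\mathrm{N}_m$ is $p'(x_p)\id_m$, so $p(\operatorname{J}(p,m))=\mathrm{N}_m\bigl(p'(x_p)\id_m+(\text{higher powers of }\mathrm{N}_m)\bigr)=\mathrm{N}_m\,U$ with $U$ invertible, provided $p'(x_p)\neq 0$. As $U$ is a polynomial in $\mathrm{N}_m$ it commutes with $\mathrm{N}_m$, whence $p^n(\operatorname{J}(p,m))=\mathrm{N}_m^n U^n$ and $\ker p^n(\operatorname{J}(p,m))=\ker\mathrm{N}_m^n$, the span of the first $\min(n,m)$ basis vectors, of $\Fp$-dimension $\min(n,m)$. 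Likewise $p(\operatorname{J}(p,m))\ker p^{n+1}(\operatorname{J}(p,m))=\mathrm{N}_m\ker\mathrm{N}_m^{\min(n+1,m)}$, of dimension $\min(n+1,m)-1$. A short case check ($n<m$ versus $n\geq m$) then shows the single-block quotient has $\Fp$-dimension $0$ when $n<m$ and $1$ when $n\geq m$.

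Summing over the blocks with their multiplicities, the blocks with $q\neq p$ dropping out, I obtain $\dim_\Fp\ker p^n(\operatorname{J}(\aleph))/p(\operatorname{J}(\aleph))\ker p^{n+1}(\operatorname{J}(\aleph))=\sum_{m=1}^{n}\aleph(p,m)$. The right-hand side of the proposition is the difference of this expression evaluated at $n$ and at $n-1$, which telescopes to $\aleph(p,n)$; for $n=1$ the $(n-1)$-term vanishes because $\ker p^0(\operatorname{T})=\ker\id=\{0\}$, consistent with the empty sum convention.

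The main obstacle is the single-block identity $p(\operatorname{J}(p,m))=\mathrm{N}_m\,U$ with $U$ invertible, since it hinges on $p'(x_p)\neq 0$, i.e. on $x_p$ being a simple root of $p$. This is precisely separability of the irreducible polynomial $p$, which holds automatically over a perfect field (in particular over $\mathbb{R}$, $\mathbb{Q}$, finite fields, and any field of characteristic zero, covering all cases of interest here). Over an imperfect field an inseparable $p$ would make $p(\operatorname{J}(p,m))$ vanish to the order of the inseparability index of $x_p$, the per-block quotient dimensions would no longer be $0$ or $1$, and the telescoping identity would fail; I would therefore either restrict to separable $p$ or flag the modification required in the inseparable case.
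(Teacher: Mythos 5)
Your proof is correct, and its skeleton is the same as the paper's: pass to $\operatorname{J}(\aleph_{\operatorname{T}})$ by similarity invariance, show blockwise that $\dim_\Fp\ker p^k(\operatorname{J}(\aleph))/p(\operatorname{J}(\aleph))\ker p^{k+1}(\operatorname{J}(\aleph))=\sum_{m=1}^{k}\aleph(p,m)$, and take the difference of consecutive values. Where you genuinely add something is in the key blockwise step. The paper simply \emph{asserts} that one can choose an $\Fp$-basis $\{\xi^m_\alpha(p,n)\}$ of $\sum_k\ker p^k(\operatorname{J}(\aleph))$ on which $p(\operatorname{J}(\aleph))$ acts as the shift $\xi^m_\alpha\mapsto\xi^{m-1}_\alpha$, with no justification; your factorization $p(\operatorname{J}(p,m))=\mathrm{N}_m U$ with $U$ invertible and commuting with $\mathrm{N}_m$ is exactly what is needed to construct such a basis (set $\xi^{m-k}=(\mathrm{N}_mU)^k e_m$ for the cyclic vector $e_m$ of the block), so your argument proves the step that the paper only states. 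Moreover, the caveat you flag is genuine, and it afflicts the paper's proof and even the statement itself, not just your route: everything hinges on $p'(x_p)\neq0$, i.e.\ on separability of $p$. If $\F$ is imperfect and $p$ is irreducible but inseparable (e.g.\ $p(X)=X^{\ell}-t$ over $\mathbb{F}_{\ell}(t)$ with $\ell$ the characteristic), then $p'\equiv0$, so on the single block $\operatorname{J}(p,2)$ one gets $p(\operatorname{J}(p,2))=p(x_p)\id_2+p'(x_p)\mathrm{N}_2=0$; the right-hand side of the proposition at $n=1$ then equals $\dim_\Fp\bigl(\Fp^2/0\bigr)-0=2$ while $\aleph(p,1)=0$, so the formula fails and the paper's asserted shift basis cannot exist. (The same unstated assumption $p'(x_p)\neq0$ appears in the proof of Lemma \ref{FpSelfNormLem}.) Over perfect fields --- which covers every example in the paper --- the issue vanishes, as you note; restricting to separable $p$, exactly as you propose, is the correct fix.
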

\begin{proof} Let us first deal with the case $\operatorname{T}=\operatorname{J}(\aleph)$ (we drop the index $\operatorname{T}$ of $\aleph_{\operatorname{T}}$ for brevity). For every fixed $p\in\sigma_\F$ the subspace
$$
\mathbf{V}_p\doteq\sum_{n=1}^\infty\ker p^n(\operatorname{J}(\aleph))=\bigoplus_{n=1}^\infty\bigoplus_{\aleph(p,n)}\Fp^n
$$
is actually an $\Fp$-vector space. Choose an $\Fp$-basis $\{\xi_\alpha^m(p,n)\}$ with $\alpha\in\aleph(p,n)$ and $1\le m\le n$ such that
$$
\mathbf{V}_p=\bigoplus_{n=1}^\infty\bigoplus_{\alpha\in\aleph(p,n)}\bigoplus_{m=1}^n\Fp\xi_\alpha^m(p,n),\quad p(\operatorname{J}(\aleph))\xi_\alpha^m(p,n)=\begin{cases}
\xi_\alpha^{m-1}(p,n),\quad m>1\\
0,\quad m=1
\end{cases}.
$$
Now it is not difficult to see that
$$
\ker p^k(\operatorname{J}(\aleph))=\bigoplus_{n=1}^\infty\bigoplus_{\alpha\in\aleph(p,n)}\bigoplus_{m=1}^{\min\{k,n\}}\Fp\xi_\alpha^m(p,n),
$$
$$
p(\operatorname{J}(\aleph))\ker p^{k+1}(\operatorname{J}(\aleph))=\bigoplus_{n=1}^\infty\bigoplus_{\alpha\in\aleph(p,n)}\bigoplus_{m=1}^{\min\{k+1,n\}-1}\Fp\xi_\alpha^m(p,n),
$$
whence
$$
\ker p^k(\operatorname{J}(\aleph))/p(\operatorname{J}(\aleph))\ker p^{k+1}(\operatorname{J}(\aleph))\simeq\bigoplus_{n=1}^k\bigoplus_{\alpha\in\aleph(p,n)}\Fp\xi_\alpha^n(p,n),
$$
and therefore
$$
\dim_\Fp\ker p^k(\operatorname{J}(\aleph))/p(\operatorname{J}(\aleph))\ker p^{k+1}(\operatorname{J}(\aleph))=\sum_{n=1}^k\aleph(p,n).
$$
Now come back to the general case with $\operatorname{S}$ being the intertwiner so that $\operatorname{S}\operatorname{T}\operatorname{S}^{-1}=\operatorname{J}(\aleph)$. Since $\operatorname{S}$ is an isometry,
$$
\dim_\Fp\ker p^k(\operatorname{T})/p(\operatorname{T})\ker p^{k+1}(\operatorname{T})=\dim_\Fp\operatorname{S}\ker p^k(\operatorname{T})/\operatorname{S}p(\operatorname{T})\ker p^{k+1}(\operatorname{T})
$$
$$
=\dim_\Fp\ker p^k(\operatorname{J}(\aleph))/p(\operatorname{J}(\aleph))\ker p^{k+1}(\operatorname{J}(\aleph))=\sum_{n=1}^k\aleph(p,n).
$$
The result easily follows. $\Box$ 
\end{proof}
\begin{corollary}\label{MultMapInjCor} The map $\aleph\mapsto[\operatorname{J}(\aleph)]$ from multiplicity functions to similarity classes is injective.
\end{corollary}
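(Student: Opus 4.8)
The plan is to read the result off directly from Proposition \ref{MultFromOpProp}, which already does all the substantive work. That proposition exhibits, for any Jordanable operator $\operatorname{T}$, the explicit formula
$$
\aleph_{\operatorname{T}}(p,n)=\dim_\Fp\ker p^n(\operatorname{T})/p(\operatorname{T})\ker p^{n+1}(\operatorname{T})-\dim_\Fp\ker p^{n-1}(\operatorname{T})/p(\operatorname{T})\ker p^n(\operatorname{T}),
$$
whose right-hand side is manufactured solely out of $\operatorname{T}$. The point I would emphasise is that every quantity appearing there is a similarity invariant: if $\operatorname{S}\operatorname{T}_1\operatorname{S}^{-1}=\operatorname{T}_2$ then, since $\operatorname{S}p^k(\operatorname{T}_1)=p^k(\operatorname{T}_2)\operatorname{S}$, the intertwiner $\operatorname{S}$ carries $\ker p^k(\operatorname{T}_1)$ onto $\ker p^k(\operatorname{T}_2)$ and $p(\operatorname{T}_1)\ker p^{k+1}(\operatorname{T}_1)$ onto $p(\operatorname{T}_2)\ker p^{k+1}(\operatorname{T}_2)$, hence induces bijections of the corresponding quotients. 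This is exactly the invariance computation already performed at the end of the proof of Proposition \ref{MultFromOpProp}.

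Granting this, injectivity is immediate. Suppose multiplicity functions $\aleph_1$ and $\aleph_2$ satisfy $[\operatorname{J}(\aleph_1)]=[\operatorname{J}(\aleph_2)]$, i.e.\ $\operatorname{J}(\aleph_1)\sim\operatorname{J}(\aleph_2)$. Each $\operatorname{J}(\aleph_i)$ is a Jordanable operator whose multiplicity function equals $\aleph_i$; indeed the first half of the proof of Proposition \ref{MultFromOpProp} verifies precisely that the formula evaluated on a canonical form $\operatorname{J}(\aleph)$ returns $\aleph$. Applying the formula to the two similar operators $\operatorname{J}(\aleph_1)$ and $\operatorname{J}(\aleph_2)$ and invoking the similarity invariance just recalled, I obtain $\aleph_1(p,n)=\aleph_2(p,n)$ for all $p\in\sigma_\F$ and all $n\in\mathbb{N}$, that is $\aleph_1=\aleph_2$.

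Because both the evaluation of the formula on a canonical form and its similarity invariance already reside in Proposition \ref{MultFromOpProp}, there is no genuine obstacle and the corollary is a short bookkeeping consequence. The one spot I would inspect twice is that the dimensions are taken over the extension field $\Fp$ rather than over $\F$, whereas the intertwiner $\operatorname{S}$ is a priori only $\F$-linear. This causes no trouble: each quotient is an $\Fp$-vector space $W$ with $\dim_\F W=\deg p\cdot\dim_\Fp W$, so an $\F$-linear bijection between two such quotients preserves $\dim_\F$ and therefore preserves $\dim_\Fp=\dim_\F/\deg p$ as well. With this remark the plan closes the argument.
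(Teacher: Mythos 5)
Your proof is correct and is essentially the paper's own argument: the corollary appears there with no separate proof precisely because, as you observe, Proposition \ref{MultFromOpProp} shows the multiplicity function is recoverable from any Jordanable operator by a formula built only out of similarity-invariant data, so $\operatorname{J}(\aleph_1)\sim\operatorname{J}(\aleph_2)$ forces $\aleph_1=\aleph_2$. Your closing remark about the intertwiner being only $\F$-linear while the dimensions are taken over $\Fp$ tightens a point the paper glosses over (its ``$\operatorname{S}$ is an isometry'' step), and it is sound — alternatively one can note that the intertwiner is an $\F[X]$-module isomorphism, so the induced maps on the quotients are automatically $\Fp$-linear, which also covers the case of infinite cardinal multiplicities without invoking division by $\deg p$.
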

Finally we show how the multiplicity function captures projective similarity.
\begin{proposition}\label{DilSimMultProp} For two multiplicity functions $\aleph,\aleph'$ and a scalar $\lambda\in\F^*$, $\operatorname{J}(\aleph')\sim\lambda\operatorname{J}(\aleph)$ if and only if $\aleph'=\lambda\star\aleph$.
\end{proposition}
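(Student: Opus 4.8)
The plan is to reduce the whole equivalence to a single identity of similarity classes, namely $\lambda\operatorname{J}(\aleph)\sim\operatorname{J}(\lambda\star\aleph)$, from which both implications follow immediately. The heart of the matter is that scalar multiplication by $\lambda$ respects the block-diagonal decomposition, so $\lambda\operatorname{J}(\aleph)$ is just the direct sum of the scaled blocks $\lambda\operatorname{J}(p,n)$ taken with the same multiplicities $\aleph(p,n)$.

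First I would invoke Proposition \ref{DilSimProp} block by block: for each $p\in\sigma_\F$ and $n\in\mathbb{N}$ we have $\lambda\operatorname{J}(p,n)\sim\operatorname{J}(\lambda\star p,n)$ as $\F$-matrices, say via an invertible block intertwiner. Since $\lambda\operatorname{J}(\aleph)$ is block diagonal with these blocks, the direct sum of the corresponding block intertwiners is again invertible (its inverse is the direct sum of the inverses, which is legitimate even for infinite direct sums), and it conjugates $\lambda\operatorname{J}(\aleph)$ termwise. Thus
$$
\lambda\operatorname{J}(\aleph)=\bigoplus_{p\in\sigma_\F}\bigoplus_{n=1}^\infty\bigoplus_{\aleph(p,n)}\lambda\operatorname{J}(p,n)\sim\bigoplus_{p\in\sigma_\F}\bigoplus_{n=1}^\infty\bigoplus_{\aleph(p,n)}\operatorname{J}(\lambda\star p,n).
$$

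Next comes the bookkeeping step, which I expect to be the only place requiring care. The map $p\mapsto\lambda\star p$ is a bijection of $\sigma_\F$ with inverse $q\mapsto\lambda^{-1}\star q$, so reindexing the outer sum by $q=\lambda\star p$ merely reorders the summands and hence does not change the similarity class. Under this relabelling the block $\operatorname{J}(q,n)$ acquires multiplicity $\aleph(\lambda^{-1}\star q,n)=[\lambda\star\aleph](q,n)$, the last equality being exactly the defining relation of the pushforward action. Therefore the right-hand side above is, up to reordering, precisely $\operatorname{J}(\lambda\star\aleph)$, establishing $\lambda\operatorname{J}(\aleph)\sim\operatorname{J}(\lambda\star\aleph)$.

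Finally I would close both directions. If $\aleph'=\lambda\star\aleph$, then $\operatorname{J}(\aleph')=\operatorname{J}(\lambda\star\aleph)\sim\lambda\operatorname{J}(\aleph)$, which is the easy implication. Conversely, if $\operatorname{J}(\aleph')\sim\lambda\operatorname{J}(\aleph)\sim\operatorname{J}(\lambda\star\aleph)$, then the injectivity of $\aleph\mapsto[\operatorname{J}(\aleph)]$ from Corollary \ref{MultMapInjCor} forces $\aleph'=\lambda\star\aleph$. The main obstacle is thus purely the index juggling in the middle step, ensuring that scalar multiplication passes through the direct sum and that the substitution $q=\lambda\star p$ reproduces the defining relation of $\lambda\star\aleph$, rather than any deeper structural difficulty.
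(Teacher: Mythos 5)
Your proposal is correct and follows essentially the same route as the paper: both establish the key similarity $\lambda\operatorname{J}(\aleph)\sim\operatorname{J}(\lambda\star\aleph)$ by applying Proposition \ref{DilSimProp} blockwise, assembling the block intertwiners into a direct-sum intertwiner, and reindexing via $p\mapsto\lambda\star p$ (which the paper does implicitly in its displayed chain of equalities), and both then finish the converse direction by transitivity together with the injectivity from Corollary \ref{MultMapInjCor}. The only difference is that you spell out the reindexing bookkeeping more explicitly, which is a matter of presentation rather than substance.
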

\begin{proof}First let $\aleph'=\lambda\star\aleph$ so that
$$
\operatorname{J}(\aleph')=\operatorname{J}(\lambda\star\aleph)=\bigoplus_{p\in\sigma_\F}\bigoplus_{n=1}^\infty\bigoplus_{\lambda\star\aleph(p,n)}\operatorname{J}(p,n)=\bigoplus_{p\in\sigma_\F}\bigoplus_{n=1}^\infty\bigoplus_{\aleph(p,n)}\operatorname{J}(\lambda\star p,n).
$$
By Proposition \ref{DilSimProp} there exists an invertible intertwiner $\operatorname{S}_{p,n}\in\End_\F(\Fp^n)$ such that $\operatorname{S}_{p,n}\operatorname{J}(\lambda\star p,n)=\lambda\operatorname{J}(p,n)\operatorname{S}_{p,n}$. It follows that $\operatorname{S}\operatorname{J}(\lambda\star\aleph)=\lambda\operatorname{J}(\aleph)\operatorname{S}$, where
$$
\operatorname{S}=\bigoplus_{p\in\sigma_\F}\bigoplus_{n=1}^\infty\bigoplus_{\aleph(p,n)}\operatorname{S}_{p,n}.
$$
For the converse let us assume that $\operatorname{J}(\aleph')\sim\lambda\operatorname{J}(\aleph)$ for some multiplicity functions $\aleph,\aleph'$ and a scalar $\lambda\in\F^*$. Above we have shown that $\lambda\operatorname{J}(\aleph)\sim\operatorname{J}(\lambda\star\aleph)$, so that by transitivity $\operatorname{J}(\aleph')\sim\operatorname{J}(\lambda\star\aleph)$. The assertion follows directly from Corollary \ref{MultMapInjCor}. $\Box$
\end{proof}
\begin{corollary}\label{ProjMultProjSimCor} The map $\F^*\aleph\mapsto[\F^*\operatorname{J}](\aleph)$ from projective multiplicity functions to projective similarity classes of Jordanable operators is also injective.
\end{corollary}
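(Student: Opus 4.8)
The plan is to read this off directly from Proposition \ref{DilSimMultProp}, which already carries all the arithmetic content; what remains is purely a translation between the two quotient constructions. Before anything else I would record that projective similarity is genuinely an equivalence relation on Jordanable operators — it is reflexive via $\lambda=1$, symmetric since $\lambda\operatorname{T}_1\sim\operatorname{T}_2$ forces $\operatorname{T}_1\sim\lambda^{-1}\operatorname{T}_2$, and transitive by composing the two intertwiners and multiplying the two scalars — so that the phrase ``projective similarity class'' is meaningful, exactly as $\F^*\star\aleph$ is a bona fide orbit. This reduces the corollary to matching up these two equivalence relations under $\aleph\mapsto\operatorname{J}(\aleph)$.

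To see the map is even well defined, I would take $\F^*\star\aleph=\F^*\star\aleph'$, so that $\aleph'=\mu\star\aleph$ for some $\mu\in\F^*$; the forward implication of Proposition \ref{DilSimMultProp} then gives $\operatorname{J}(\aleph')\sim\mu\operatorname{J}(\aleph)$, placing $\operatorname{J}(\aleph)$ and $\operatorname{J}(\aleph')$ in one projective similarity class. Hence the assigned class $[\F^*\operatorname{J}](\aleph)$ depends only on the orbit $\F^*\star\aleph$ and not on the chosen representative.

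For injectivity I would suppose that two projective multiplicity functions $\F^*\star\aleph$ and $\F^*\star\aleph'$ are sent to the same projective similarity class, i.e.\ that $\operatorname{J}(\aleph)$ and $\operatorname{J}(\aleph')$ are projectively similar. By definition there is a $\lambda\in\F^*$ with $\lambda\operatorname{J}(\aleph)\sim\operatorname{J}(\aleph')$, and now the converse implication of Proposition \ref{DilSimMultProp} yields at once $\aleph'=\lambda\star\aleph$. Thus $\aleph'$ lies in the $\F^*$-orbit of $\aleph$, whence $\F^*\star\aleph'=\F^*\star\aleph$, and the two projective multiplicity functions coincide.

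I do not expect a genuine obstacle: once Proposition \ref{DilSimMultProp} is available, the corollary is a formal consequence of it (together with the injectivity of $\aleph\mapsto[\operatorname{J}(\aleph)]$ from Corollary \ref{MultMapInjCor}, which is what Proposition \ref{DilSimMultProp} itself rests on). The only step needing a moment's care is the bookkeeping that the two equivalence relations — lying in the same $\F^*$-orbit of multiplicity functions, and being projectively similar as operators — correspond in \emph{both} directions, which is precisely the ``if and only if'' of Proposition \ref{DilSimMultProp}; the forward direction secures well-definedness and the backward direction secures injectivity.
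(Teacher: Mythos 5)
Your proposal is correct and takes essentially the same route as the paper, which states this as an immediate corollary of Proposition \ref{DilSimMultProp} without further argument: the forward implication gives well-definedness and the converse gives injectivity, exactly as you spell out. Your extra bookkeeping (verifying projective similarity is an equivalence relation and that the map respects orbits) is sound and simply makes explicit what the paper leaves implicit.
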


We end this section by putting the question of Jordanability into the context of general commutative algebra, where it truly belongs.
\begin{remark}\label{JordModRemark} The linear operator $\mathrm{T}$ on an $\F$-vector space $\mathbf{V}$ is Jordanable if and only if the $\F[X]$-module $\mathbf{V}$ is a direct sum of finite length submodules.
\end{remark}
Indeed, it is easy to see that a finite length $\F[X]$-submodule of $\mathbf{V}$ is a finite direct sum of Jordan blocks spaces.





\section{Invariant subspaces of a Jordanable operator}\label{InvSubspJordOp}

In this section we will describe the invariant subspaces $\mathbf{W}\subset\mathbf{V}$ of a Jordanable operator $\mathrm{T}$ on a vector space $\mathbf{V}$ over a field $\F$. This will later be used to describe subalgebras and ideals of an almost Abelian Lie algebra. Aside from that, invariant subspaces are of considerable importance in the context of linear dynamical systems. A very detailed exposition of invariant subspaces for linear operators on finite dimensional real and complex vector spaces can be found, for instance, in \cite{Gohberg2006}. Here we will touch the subject very briefly, but in a larger context of arbitrary vector spaces.

In order to do so let us give an adaptation to our setting of a theorem by Kulikov on commutative groups.
\begin{theorem}\label{KulikovTheorem} For $p\in\sigma_\F$, an $\Fp$-module $\mathbf{V}_p$ is a direct sum of cyclic submodules if and only if $\mathbf{V}_p$ is the union of an ascending chain of submodules,
$$
\mathbf{V}_p^1\subset\mathbf{V}_p^2\subset\ldots,\quad\mathbf{V}=\bigcup_{n=1}^\infty\mathbf{V}_p^n,
$$
such that for every $n\in\mathbb{N}$, the heights of nonzero elements in $\mathbf{V}_p^n$ are bounded by natural numbers $k_n\in\mathbb{N}$.
\end{theorem}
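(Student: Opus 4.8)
The statement is Kulikov's theorem adapted to modules over the principal ideal domain $\Fp$ (or more precisely over the localization/completion setting where $p(\operatorname{T})$ acts as a "prime" element). Let me think about this carefully.

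We have $p \in \sigma_\F$, and $\mathbf{V}_p$ is a module over... what exactly? Looking at the context, $\mathbf{V}_p = \sum_n \ker p^n(\operatorname{T})$ from the earlier proposition. This is an $\Fp$-vector space, but the relevant module structure is as an $\F[X]$-module where $X$ acts via $\operatorname{T}$, localized at $p$. Actually, the "height" terminology and "cyclic submodules" strongly suggest this is the classical Kulikov theorem for primary abelian groups / modules over a DVR or PID.

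The classical Kulikov criterion: A primary abelian $p$-group $G$ is a direct sum of cyclic groups iff $G$ is the union of an ascending chain $G_1 \subseteq G_2 \subseteq \cdots$ where the heights of nonzero elements of each $G_n$ are bounded.

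**The module-theoretic translation.** Here $p(\operatorname{T})$ plays the role of multiplication by the prime $p$. A "cyclic submodule" is a Jordan block $\Fp^k$ (generated by one element under the action). The "height" of an element $v$ is the largest $j$ such that $v \in p(\operatorname{T})^j \mathbf{V}_p$ (i.e., $v = p(\operatorname{T})^j w$ for some $w$).

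**My proof sketch.**

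*Forward direction (easy):* If $\mathbf{V}_p = \bigoplus_\lambda C_\lambda$ with each $C_\lambda$ cyclic (a Jordan block of some finite length $n_\lambda$), define $\mathbf{V}_p^n = \bigoplus_{\lambda: n_\lambda \le n} C_\lambda$. These form an ascending chain whose union is all of $\mathbf{V}_p$. In a cyclic module of length $n$, every nonzero element has height $< n$. So elements of $\mathbf{V}_p^n$ have height bounded by $n-1$. Set $k_n = n-1$ (or $n$). Done.

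*Reverse direction (the hard part):* This is the genuine content. Given the ascending chain with bounded heights, we must construct a direct sum decomposition into cyclics. The classical proof proceeds by:

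1. **Reduction to bounded-height pieces.** The hypothesis lets us filter by the $\mathbf{V}_p^n$, each of which has bounded-height elements.

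2. **The key combinatorial lemma.** One builds a basis by a transfinite/inductive selection process, choosing elements that are "pure-independent" — this is where one must be careful. The standard approach uses the fact that an element of bounded height can be extended to a cyclic direct summand.

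3. **Prüfer/Kulikov machinery.** One shows that a module generated by elements of bounded height, with the chain condition, decomposes. The trick is to choose a maximal family of independent cyclic summands via Zorn's lemma, then show it exhausts the module using the height bounds to rule out "unbounded" obstructions.

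**The main obstacle.**

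The reverse direction is the crux. The difficulty is the infinite-dimensional/cardinality aspect — without the height-bound hypothesis, a primary group need NOT decompose into cyclics (e.g., the $p$-adic integers, or more relevantly, groups with elements of infinite height). The height-boundedness is precisely what prevents pathological elements of "infinite height" (divisible parts) and allows the inductive construction to terminate properly.

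For the adaptation to $\Fp$-modules, I'd need to verify that:
- The notions of height, purity, and independence transfer correctly from abelian $p$-groups to $\F[X]$-modules localized at the prime ideal $(p)$,
- The DVR $\F[X]_{(p)}$ behaves analogously to $\mathbb{Z}_{(p)}$,
- No characteristic-specific phenomena interfere (the classical proof is characteristic-free for the PID setting).

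**In summary:** The forward direction is a direct construction from the Jordan decomposition. The reverse direction is essentially Kulikov's original theorem transported to modules over the DVR $\F[X]_{(p)}$; the proof uses purity, the height function, and a Zorn's-lemma construction of a maximal independent system of cyclic summands, with the height-bound hypothesis being exactly the condition that makes this system exhaustive.
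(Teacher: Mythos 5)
Your proposal is correct and takes essentially the same route as the paper: the paper gives no independent proof of this theorem, but simply cites Kulikov's criterion for primary abelian groups (Fuchs, \textit{Infinite Abelian Groups}, Theorem 17.1) and remarks that the argument extends to modules over a PID with only trivial modifications --- which is exactly your strategy of transporting the classical height/purity/Zorn machinery to the DVR $\F[X]_{(p)}$, with cyclic submodules as Jordan blocks and height measured by divisibility by $p(\mathrm{T})$. Your explicit treatment of the forward direction and of the terminological translation is in fact slightly more detailed than what the paper records.
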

The original proof can be found in (\cite{Fuchs1970}, Theorem 17.1 on page 87), and it can be extended to modules over PID with only trivial modifications.

Let $\mathrm{T}$ be a Jordanable operator on the $\F$-vector space $\mathbf{V}$, and let $\mathbf{W}\subset\mathbf{V}$ be a $\mathrm{T}$-invariant vector subspace.
\begin{proposition}\label{JordOpRestrProp} The restriction $\mathrm{T}|_\mathbf{W}$ of the Jordanable linear operator $\mathrm{T}$ to an invariant vector subspace $\mathbf{W}$ is again Jordanable.
\end{proposition}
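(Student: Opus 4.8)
The plan is to translate the statement into module language via Remark \ref{JordModRemark} and then reduce to a single prime, where Theorem \ref{KulikovTheorem} is exactly the tool required. Regard $\mathbf{V}$ as an $\F[X]$-module with $X$ acting by $\mathrm{T}$. Since $\mathrm{T}$ is Jordanable, $\mathbf{V}$ is a direct sum of finite length (hence torsion) submodules, so $\mathbf{V}$ is a torsion $\F[X]$-module and splits into its primary components $\mathbf{V}=\bigoplus_{p\in\sigma_\F}\mathbf{V}_p$, where $\mathbf{V}_p=\{v\,|\,p^n(\mathrm{T})v=0\text{ for some }n\}$. The subspace $\mathbf{W}$ is $\mathrm{T}$-invariant, hence an $\F[X]$-submodule, hence itself torsion; thus $\mathbf{W}=\bigoplus_{p}\mathbf{W}_p$ with $\mathbf{W}_p=\mathbf{W}\cap\mathbf{V}_p$. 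It therefore suffices to show that each $\mathbf{W}_p$ is a direct sum of cyclic submodules, because a cyclic submodule of $\mathbf{V}_p$ is annihilated by a power $p^n$ and so is exactly a Jordan block space $\operatorname{J}(p,n)$, and a direct sum of such over all $p$ is again a Jordan canonical form in the sense of Remark \ref{JordModRemark}.

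First I would fix $p$ and view $\mathbf{V}_p$ as in Proposition \ref{MultFromOpProp}, with $\mathrm{N}=p(\mathrm{T})|_{\mathbf{V}_p}$ playing the role of the prime, so that the height of a nonzero $v\in\mathbf{V}_p$ is the largest $k$ with $v\in\mathrm{N}^k\mathbf{V}_p$. Because $\mathbf{V}_p$ is a direct sum of cyclic submodules (Jordan blocks), the ``only if'' direction of Theorem \ref{KulikovTheorem} furnishes an ascending chain $\mathbf{V}_p^1\subset\mathbf{V}_p^2\subset\cdots$ with $\mathbf{V}_p=\bigcup_n\mathbf{V}_p^n$ such that the heights of nonzero elements of $\mathbf{V}_p^n$ are bounded by some $k_n\in\mathbb{N}$. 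I would then restrict this chain by setting $\mathbf{W}_p^n=\mathbf{W}_p\cap\mathbf{V}_p^n$: these are submodules, they ascend, and $\bigcup_n\mathbf{W}_p^n=\mathbf{W}_p\cap\mathbf{V}_p=\mathbf{W}_p$, so the chain hypothesis of the ``if'' direction of Theorem \ref{KulikovTheorem} is met.

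The hard part will be verifying the bounded-height hypothesis for this restricted chain, and here the key observation is that heights can only \emph{decrease} under passage to a submodule: for $w\in\mathbf{W}_p$ one has $\mathrm{N}^k\mathbf{W}_p\subseteq\mathrm{N}^k\mathbf{V}_p$, so the height of $w$ computed in $\mathbf{W}_p$ is at most its height computed in $\mathbf{V}_p$. Consequently, for nonzero $w\in\mathbf{W}_p^n\subseteq\mathbf{V}_p^n$ the height in $\mathbf{W}_p$ is bounded by the same $k_n$, and the ``if'' direction of Theorem \ref{KulikovTheorem} then shows that $\mathbf{W}_p$ is a direct sum of cyclic submodules. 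Assembling over all $p$ exhibits $\mathbf{W}=\bigoplus_p\mathbf{W}_p$ as a direct sum of Jordan block spaces, whence $\mathrm{T}|_{\mathbf{W}}$ is Jordanable. I expect the only genuinely delicate points to be the legitimacy of the primary splitting of the submodule $\mathbf{W}$ in the infinite-dimensional setting and this monotonicity of heights; both become routine once the torsion-module framework is in place, which is precisely why Theorem \ref{KulikovTheorem} is the natural instrument here.
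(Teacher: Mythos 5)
Your proof is correct and follows essentially the same route as the paper's: primary (spectral) decomposition of $\mathbf{V}$ and $\mathbf{W}$, Kulikov's criterion (Theorem \ref{KulikovTheorem}) applied to the intersected chain $\mathbf{W}_p^n=\mathbf{W}_p\cap\mathbf{V}_p^n$ with the height bound inherited from $\mathbf{V}_p^n$, then reassembly via Remark \ref{JordModRemark}. The only cosmetic difference is at the end: you deduce finite length of the cyclic pieces directly from the fact that elements of $\mathbf{W}_p$ are $p$-power torsion, whereas the paper embeds a cyclic submodule $\F[\mathrm{T}]v$ into the finite direct sum $\F[\mathrm{T}]v_1\oplus\ldots\oplus\F[\mathrm{T}]v_k$; your version is slightly cleaner, and you also spell out the height monotonicity under passage to a submodule that the paper dismisses as obvious.
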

\begin{proof} Let
$$
\mathbf{V}=\bigoplus_{p\in\sigma_\F}\mathbf{V}_p,\quad\mathbf{W}=\bigoplus_{p\in\sigma_\F}\mathbf{W}_p
$$
be the corresponding spectral decompositions. The $\Fp$-module $\mathbf{V}_p$ is obviously a direct sum of cyclic submodules (Jordan block spaces), so that by Theorem \ref{KulikovTheorem} we obtain a chain of submodules $\{\mathbf{V}_p^n\}_{n=1}^\infty$ with the above mentioned properties. Denote $\mathbf{W}_p^n\doteq\mathbf{V}_p^n\cap\mathbf{W}_p$, which is an $\Fp$-submodule of $\mathbf{W}_p$. The heights of non-zero elements in $\mathbf{W}_p^n\subset\mathbf{V}_p^n$ are obviously bounded by the same $k_n$ as above. Moreover,
$$
\mathbf{W}_p=\mathbf{W}_p\cap\mathbf{V}_p=\mathbf{W}_p\cap\left(\bigcup_{n=1}^\infty\mathbf{V}_p^n\right)=\bigcup_{n=1}^\infty\left(\mathbf{W}_p\cap\mathbf{V}_p^n\right)=\bigcup_{n=1}^\infty\mathbf{W}_p^n.
$$
Thus again by Theorem \ref{KulikovTheorem} we learn that each $\mathbf{W}_p$, and thereby also $\mathbf{W}$ is a direct sum of cyclic $\F[X]$-submodules of $\mathbf{V}$. Every such cyclic submodule is of finite length. Indeed, if $v\in\mathbf{V}$ is a cyclic vector then the submodule equals $\F[\mathrm{T}]v$. Write $v=v_1+\ldots v_k$, where $v_1,\ldots,v_k$ are cyclic vectors in the original Jordan decomposition. Then our cyclic submodule $\F[\mathrm{T}]v$ is a submodule of the direct sum $\F[\mathrm{T}]v_1\oplus\ldots\oplus\F[\mathrm{T}]v_k$, which is of finite length by itself. Therefore by Remark \ref{JordModRemark} $\mathrm{T}|_\mathbf{W}$ is Jordanable. $\Box$
\end{proof}
Thus the description of an invariant subspace of a Jordanable operator reduces to that of an irreducible invariant subspace, i.e., one with a single Jordan block space.
\begin{corollary}\label{InvSubspCorr} An invariant subspace of a Jordanable operator on a vector space $\mathbf{V}$ with Jordan decomposition
$$
\mathbf{V}=\bigoplus_{p\in\sigma_\F}\bigoplus_{n=1}^\infty\bigoplus_{\alpha\in\aleph(p,n)}\bigoplus_{m=1}^n\Fp\xi^m_\alpha(p,n)
$$
is any subspace of the form
\begin{equation}
\mathbf{W}=\bigoplus_{p\in\sigma_\F}\bigoplus_{n=1}^\infty\bigoplus_{\beta\in\beth(p,n)}\bigoplus_{m=1}^n\Fp\eta^m_\beta(p,n),\label{InvSubspForm}
\end{equation}
where $\beth$ is a multiplicity function and
\begin{equation}
\eta^m_\beta(p,n)=\sum_{k=1}^\infty\sum_{\alpha\in\aleph(p,k)}\sum_{l=1}^{\min\{k,m\}}\mu_p(n,\beta;k,\alpha,m-l)\xi^l_\alpha(p,k),\quad\forall p\in\sigma_\F,\quad\forall n\in\mathbb{N},\quad m=1,\ldots,n.\label{InvSubspTransform}
\end{equation}
For every $\beta\in\beth(p,n)$ there exists an $\alpha\in\aleph(p,\bar n)$ with $\bar n\ge n$ such that the $\F$-valued coefficient function $\mu_p(n,\beta;\bar n,\alpha,0)\neq0$. The support of $\mu_p(n,\beta;.,.,.)$ is finite.
\end{corollary}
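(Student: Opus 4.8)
\section*{Proof proposal}

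The plan is to obtain the overall shape of $\mathbf{W}$ for free from Proposition~\ref{JordOpRestrProp} and then extract the coefficient structure by index bookkeeping. Since $\mathbf{W}$ is $\operatorname{T}$-invariant, Proposition~\ref{JordOpRestrProp} guarantees that $\operatorname{T}|_\mathbf{W}$ is again Jordanable, so $\mathbf{W}$ carries its own Jordan basis $\{\eta^m_\beta(p,n)\}$ together with a multiplicity function $\beth$, which is exactly the decomposition (\ref{InvSubspForm}). The prime labels match those of $\mathbf{V}$ because the primary decomposition $\mathbf{V}=\bigoplus_p\mathbf{V}_p$ is canonical: each generator $\eta^n_\beta(p,n)$ is annihilated by $p^n(\operatorname{T})$, hence lies in $\mathbf{V}_p$, so the $p$-component of $\mathbf{W}$ sits inside $\mathbf{V}_p$. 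The one point that needs care is that the lines $\Fp\eta^m_\beta(p,n)$ genuinely lie in $\mathbf{W}$, i.e.\ that $\mathbf{W}\cap\mathbf{V}_p$ is an $\Fp$-submodule and not merely an $\F$-subspace; this rests on the fact that on $\mathbf{V}_p$ the scalar action of $\Fp$ (equivalently the semisimple part of $\operatorname{T}$) is a polynomial in $\operatorname{T}$, so that every $\operatorname{T}$-invariant subspace is automatically $\Fp$-invariant. This is the same fact already used implicitly in the proof of Proposition~\ref{JordOpRestrProp}.

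The crucial structural observation is that, in the adapted basis of Proposition~\ref{MultFromOpProp}, the operator $p(\operatorname{T})$ acts on \emph{both} families as the one-step lowering, $p(\operatorname{T})\xi^l_\alpha(p,k)=\xi^{l-1}_\alpha(p,k)$ and $p(\operatorname{T})\eta^m_\beta(p,n)=\eta^{m-1}_\beta(p,n)$, with the convention that index $0$ returns $0$. Fixing $p$, $n$ and $\beta$, I would first expand the top generator $\eta^n_\beta(p,n)=\sum_{k,\alpha,l'}c^{k,\alpha,l'}\,\xi^{l'}_\alpha(p,k)$ in the Jordan basis of $\mathbf{V}_p$; as an element of a direct sum this expansion is finite, which already furnishes the finiteness of $\supp\mu_p(n,\beta;\cdot,\cdot,\cdot)$. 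Since $\eta^n_\beta(p,n)\in\ker p^n(\operatorname{T})$, the description of $\ker p^n(\operatorname{T})$ from the proof of Proposition~\ref{MultFromOpProp} forces $l'\le\min\{n,k\}$, so only such terms survive. I then obtain every lower generator by the lowering operator, $\eta^m_\beta(p,n)=p(\operatorname{T})^{n-m}\eta^n_\beta(p,n)$, which shifts each $\xi^{l'}_\alpha(p,k)$ down to $\xi^{l'-(n-m)}_\alpha(p,k)$. Reading off the coefficient of $\xi^l_\alpha(p,k)$ shows it equals $c^{k,\alpha,l+(n-m)}$, i.e.\ it depends on $m$ and $l$ only through $m-l$; putting $\mu_p(n,\beta;k,\alpha,j):=c^{k,\alpha,n-j}$ reproduces (\ref{InvSubspTransform}), while the bound $l'\le\min\{n,k\}$ translates into the summation range $1\le l\le\min\{k,m\}$.

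It remains to verify the non-degeneracy clause, which simply encodes that the chain through $\eta^n_\beta(p,n)$ has length exactly $n$. Indeed $p(\operatorname{T})^{n-1}\eta^n_\beta(p,n)\neq0$, and by the shift computation this vector equals $\sum_{k\ge n,\,\alpha}c^{k,\alpha,n}\,\xi^1_\alpha(p,k)$; its non-vanishing yields some $\bar n\ge n$ and $\alpha\in\aleph(p,\bar n)$ with $c^{\bar n,\alpha,n}=\mu_p(n,\beta;\bar n,\alpha,0)\neq0$, as required. For the converse direction I would check that any data of this form defines an invariant subspace: the prescribed $\eta$'s are lowered into one another by $p(\operatorname{T})$ by construction, so $\mathbf{W}$ is closed under $p(\operatorname{T})$ and under the $\Fp$-scalars, hence under $\operatorname{T}$ (again because $\operatorname{T}$ lies in the $\Fp$-algebra generated by $p(\operatorname{T})$ on $\mathbf{V}_p$), while the non-degeneracy ensures the chains really have length $n$, so that $\mathbf{W}$ has multiplicity function $\beth$. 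I expect the main obstacle to be not the bookkeeping but precisely the justification underlying the first paragraph, namely that $\operatorname{T}$-invariance forces $\Fp$-invariance on each primary component; everything else follows from the single fact that $p(\operatorname{T})$ is the common lowering operator for the two Jordan bases, combined with the kernel description from Proposition~\ref{MultFromOpProp}.
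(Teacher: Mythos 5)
Your proposal is correct and follows essentially the same route as the paper: both reduce the problem via Proposition \ref{JordOpRestrProp} to describing a single Jordan chain sitting inside $\mathbf{V}$, and both characterize such a chain by the lowering relations $p(\operatorname{T})\eta^m_\beta(p,n)=\eta^{m-1}_\beta(p,n)$ for $1<m\le n$ and $p(\operatorname{T})\eta^1_\beta(p,n)=0$, from which formula (\ref{InvSubspTransform}) is read off. The coefficient bookkeeping you spell out --- expanding the top generator inside $\ker p^n(\operatorname{T})$ as described in Proposition \ref{MultFromOpProp}, pushing it down with $p(\operatorname{T})^{n-m}$, and deducing the non-degeneracy and finiteness clauses --- is exactly what the paper's proof compresses into ``the rest follows easily.''
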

\begin{proof} In view of Proposition \ref{JordOpRestrProp} it suffices to prove that a subspace of the form
$$
\bigoplus_{m=1}^n\Fp\eta^m_\beta(p,n)
$$
is a Jordan block if and only if (\ref{InvSubspTransform}) holds. Being a Jordan block amounts to $p(\mathrm{T})\eta^m_\beta(p,n)=\eta^{m-1}_\beta(p,n)$ for $1<m\le n$ and $p(\mathrm{T})\eta^1_\beta(p,n)=0$. The rest follows easily. $\Box$
\end{proof}





\section{Linear equations with Jordanable operators}\label{LinEqJordOp}

In \cite{Avetisyan2016} the explicit representations of several properties of an almost Abelian Lie algebra (e.g., automorphisms and derivations) were reduced to the solution of a few linear operator equations. These equations will be solved in this section under the assumption that the coefficient operators are Jordanable. For the case of finite dimensional matrices over $\F\in\{\mathbb{R},\mathbb{C}\}$ the general solution of linear matrix equations can be found in literature (e.g., \cite{Gantmacher1959}), but here we will obtain more explicit answers for the particular cases at hand.

Let us start by establishing the following simple fact.
\begin{lemma}\label{FpSelfNormLem} For every $p\in\sigma_\F$, the subring $\Fp\subset\End_\F(\Fp)$ is self-normalizing, i.e., $[\operatorname{T},\Fp]\subset\Fp$ implies $\operatorname{T}\in\Fp$ for every $\operatorname{T}\in\End_\F(\Fp)$.
\end{lemma}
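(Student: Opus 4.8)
The plan is to reduce the stated idealizer condition to a centralizer condition, and then to identify the centralizer of $\Fp$ with $\Fp$ itself. First I would record that, because $\Fp$ is a field, the multiplication operators $L_a\in\End_\F(\Fp)$ (left multiplication by $a\in\Fp$) commute with one another. Setting $D\doteq\ad_{\operatorname{T}}|_{\Fp}=[\operatorname{T},\cdot\,]|_{\Fp}$, the hypothesis $[\operatorname{T},\Fp]\subset\Fp$ says exactly that $D$ maps $\Fp$ into itself, and the Leibniz identity $[\operatorname{T},ab]=[\operatorname{T},a]\,b+a\,[\operatorname{T},b]$, valid since $a,b\in\Fp$ commute as operators, shows that $D$ is an $\F$-linear derivation of the field $\Fp$. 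Moreover $D$ annihilates $\F$, because scalars are central in $\End_\F(\Fp)$, so $D$ is completely determined by the single value $D(x_p)\in\Fp$, as $\Fp=\F[x_p]$ is generated over $\F$ by $x_p$.

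The crux is to prove $D(x_p)=0$. Applying $D$ to the defining relation $p(x_p)=0$ and using the Leibniz rule for polynomials yields $0=D\bigl(p(x_p)\bigr)=p'(x_p)\,D(x_p)$, where $p'$ is the formal derivative of $p$. Since $\Fp$ is a field and hence has no zero divisors, either $D(x_p)=0$ or $p'(x_p)=0$. The anticipated main obstacle is precisely to exclude the second alternative: $p'(x_p)\neq 0$ is the assertion that the monic irreducible $p$ is \emph{separable}, equivalently that $\Fp/\F$ is a separable extension. Because $\deg p'<\deg p$, the relation $p'(x_p)=0$ in $\Fp=\F[X]/p(X)\F[X]$ forces $p\mid p'$ in $\F[X]$ and thus $p'\equiv 0$; consequently $p'(x_p)\neq 0$ holds automatically whenever $p'\not\equiv 0$, in particular whenever $\operatorname{char}\F=0$ (and more generally over any perfect field). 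Under this separability input I conclude $D(x_p)=0$, whence $D\equiv 0$.

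Once $D\equiv 0$, the operator $\operatorname{T}$ commutes with every $L_a$, $a\in\Fp$. Evaluating at $1\in\Fp$ gives, for all $a\in\Fp$, $\operatorname{T}(a)=(\operatorname{T}\,L_a)(1)=(L_a\,\operatorname{T})(1)=a\,\operatorname{T}(1)$, so that $\operatorname{T}=L_{\operatorname{T}(1)}$ is multiplication by the element $\operatorname{T}(1)\in\Fp$; that is, $\operatorname{T}\in\Fp\subset\End_\F(\Fp)$, as claimed. In summary the argument is a two-step reduction — from the idealizer condition to the centralizer condition via the vanishing of an induced $\F$-derivation of $\Fp$, and from the centralizer condition to membership in $\Fp$ via the regular action on $\Fp$ — the one genuinely nontrivial ingredient being the separability of $p$, which is where the characteristic-zero setting underlying the paper's examples enters and without which the statement can in fact fail.
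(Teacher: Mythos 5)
Your proof is correct and follows essentially the same route as the paper's: your derivation identity $0=D\bigl(p(x_p)\bigr)=p'(x_p)D(x_p)$ is exactly the paper's inductively proven relation $\operatorname{T}P(x_p)=P(x_p)\operatorname{T}+P'(x_p)[\operatorname{T},x_p]$ specialized to $P=p$, both invoke minimality of $p$ together with $\deg p'<\deg p$ to conclude $p'(x_p)\neq0$, and both finish with the same evaluation-at-$1$ (self-centralizing) argument showing that an operator commuting with $\Fp$ is multiplication by its value at $1$. Your separability caveat is a genuine sharpening rather than a divergence: the paper's step ``$\deg p'<\deg p$, therefore $p'(x_p)\neq0$'' silently assumes $p'\not\equiv0$, and over an imperfect field of positive characteristic, where $p$ may be inseparable, a nonzero $\F$-derivation $D$ of $\Fp$ satisfies $[D,\Fp]\subset\Fp$ while $D\notin\Fp$ (since $D(1)=0$), so the lemma and the paper's proof do implicitly require the separability you identified.
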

\begin{proof} If $\deg p=1$ then $\F=\Fp=\End_\F(\Fp)$ and the claim is trivial, so we will concentrate on the case $\deg p>1$. Let us first note that $\Fp\subset\End_\F(\Fp)$ is self-centralizing, that is, $[\operatorname{T},\Fp]=0$ implies $\operatorname{T}\in\Fp$ for every $\operatorname{T}\in\End_\F(\Fp)$. Indeed, since $x=x1$ for every $x\in\Fp$, we have
$$
\operatorname{T}x=\operatorname{T}(x1)=(\operatorname{T}x)1=(x\operatorname{T})1=x(\operatorname{T}1)=x((\operatorname{T}1)1)=(x(\operatorname{T}1))1=((\operatorname{T}1)x)1=(\operatorname{T}1)x,\quad\forall x\in\Fp,
$$
which shows that $\operatorname{T}=\operatorname{T}1\in\Fp$. Now let $\Fp=\F(x_p)$ and let by the assumption $[\operatorname{T},x_p]=y$ for some $y\in\Fp$. This can be written as $\operatorname{T}x_p=x_p\mbox{T}+y$. One can prove inductively that
$$
\operatorname{T}x_p^m=x_p^m\operatorname{T}+mx_p^{m-1}y,
$$
which in its turn implies
$$
\operatorname{T}P(x_p)=P(x_p)\operatorname{T}+P'(x_p)y,\quad\forall P(X)\in\F[X].
$$
Choosing $P(X)=p(X)$ so that $p(x_p)=0$ gives $p'(x_p)y=0$. But $p(X)$ is the minimal polynomial of $x_p$ and $\deg p'<\deg p$, therefore $p'(x_p)\neq0$. This shows that $y=0$, and thus $[\operatorname{T},\Fp]=[\operatorname{T},\F(x_p)]=0$, which by the self-centralization of $\Fp$ proves that $\operatorname{T}\in\Fp$. $\Box$
\end{proof}
For every $m,n\in\mathbb{N}$, $p\in\sigma_\F$ and rectangular matrix $\operatorname{T}\in\Hom_\F(\Fp^m,\Fp^n)$ denote
$$
[\operatorname{T},x_p\id]=\operatorname{T}(\id_m\otimes x_p)-(\id_n\otimes x_p)\operatorname{T},
$$
were the tensor product is understood in the same sense as in (\ref{MatrixTensorProd}).
\begin{corollary}\label{xpCommFptoFpCor} For every $p\in\sigma_\F$, $m,n\in\mathbb{N}$, and for every rectangular matrix $\operatorname{T}\in\Hom_\F(\Fp^m,\Fp^n)$, the statement $[\operatorname{T},x_p\id]\in\Hom_\Fp(\Fp^m,\Fp^n)$ implies $\operatorname{T}\in\Hom_\Fp(\Fp^m,\Fp^n)$.
\end{corollary}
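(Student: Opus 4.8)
The plan is to reduce this rectangular statement to the scalar case already settled in Lemma \ref{FpSelfNormLem} by decomposing $\operatorname{T}$ into blocks and treating each block separately. First I would regard $\operatorname{T}\in\Hom_\F(\Fp^m,\Fp^n)$ as an $n\times m$ array of blocks $\operatorname{T}_{ij}\in\End_\F(\Fp)$, the $(i,j)$ block being the $\F$-linear map from the $j$-th copy of $\Fp$ in the domain to the $i$-th copy of $\Fp$ in the codomain. With respect to this decomposition the scalar action $x_p\id=\id\otimes x_p$ is block-diagonal, with the companion matrix (\ref{xpMatrixForm}) of $x_p$ sitting in every diagonal block, so a direct computation shows that the $(i,j)$ block of $[\operatorname{T},x_p\id]$ is exactly the scalar commutator $[\operatorname{T}_{ij},x_p]$.

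Next I would translate both the hypothesis and the desired conclusion into conditions on these blocks. The crucial point, which is essentially the self-centralizing property proved inside Lemma \ref{FpSelfNormLem}, is that a block matrix $\operatorname{S}$ lies in $\Hom_\Fp(\Fp^m,\Fp^n)$ if and only if each of its blocks $\operatorname{S}_{ij}$ lies in $\Fp\subset\End_\F(\Fp)$: being $\Fp$-linear means commuting with the $\Fp$-scalar action, which block-wise is precisely $[\operatorname{S}_{ij},\Fp]=0$, forcing $\operatorname{S}_{ij}\in\Fp$. Applied to $\operatorname{S}=[\operatorname{T},x_p\id]$, the hypothesis $[\operatorname{T},x_p\id]\in\Hom_\Fp(\Fp^m,\Fp^n)$ becomes the family of scalar conditions $[\operatorname{T}_{ij},x_p]\in\Fp$ for all $i,j$.

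Each of these is precisely the situation of Lemma \ref{FpSelfNormLem} for the single operator $\operatorname{T}_{ij}\in\End_\F(\Fp)$: from $[\operatorname{T}_{ij},x_p]\in\Fp$ one first deduces $[\operatorname{T}_{ij},P(x_p)]=P'(x_p)[\operatorname{T}_{ij},x_p]\in\Fp$ for every $P\in\F[X]$, hence $[\operatorname{T}_{ij},\Fp]\subset\Fp$, and the self-normalizing conclusion of the Lemma then yields $\operatorname{T}_{ij}\in\Fp$. As this holds for every block, $\operatorname{T}$ itself belongs to $\Hom_\Fp(\Fp^m,\Fp^n)$. I do not expect a genuine obstacle here, since all the real content is carried by the Lemma; the only step demanding care is the block/tensor bookkeeping — verifying that $\id\otimes x_p$ acts block-diagonally and that $\Fp$-linearity of a block matrix is equivalent to each block being an element of $\Fp$ — after which the reduction to the scalar case is immediate.
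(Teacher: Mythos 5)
Your proof is correct and follows essentially the same route as the paper's: decompose $\operatorname{T}$ into an $n\times m$ array of $\End_\F(\Fp)$-blocks, note that the hypothesis says exactly that each blockwise commutator $[\operatorname{T}_{ij},x_p]$ lies in $\Fp$, and apply Lemma \ref{FpSelfNormLem} block by block to conclude $\operatorname{T}_{ij}\in\Fp$, i.e.\ $\operatorname{T}\in\Hom_\Fp(\Fp^m,\Fp^n)$. The only difference is that you explicitly bridge the gap between $[\operatorname{T}_{ij},x_p]\in\Fp$ and the lemma's stated hypothesis $[\operatorname{T}_{ij},\Fp]\subset\Fp$ via the identity $[\operatorname{T}_{ij},P(x_p)]=P'(x_p)[\operatorname{T}_{ij},x_p]$, a detail the paper's proof leaves implicit.
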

\begin{proof}Every matrix $\operatorname{T}\in\Hom_\F(\Fp^m,\Fp^n)$ can be considered as a block matrix of dimension $n\times m$ with blocks being elements of $\End_\F(\Fp)$. Therefore the commutator
$$
\operatorname{R}\doteq\operatorname{T}(\id_m\otimes x_p)-(\id_n\otimes x_p)\operatorname{T}\in\Hom_\F(\Fp^m,\Fp^n)
$$
is a block matrix with blocks being $\operatorname{R}_{k,l}=[\operatorname{T}_{k,l},x_p]\in\End_\F(\Fp)$, $k=1,...,n$, $l=1,...,m$. If we know that $\operatorname{R}\in\Hom_\Fp(\Fp^m,\Fp^n)$, which means that each block entry satisfies $\operatorname{R}_{k,l}=[\operatorname{T}_{k,l},x_p]\in\Fp$, then by Lemma \ref{FpSelfNormLem} we get $\operatorname{T}_{k,l}\in\Fp$, meaning that $\operatorname{T}\in\Hom_\Fp(\Fp^m,\Fp^n)$. $\Box$
\end{proof}

For two $\F$-vector spaces $\mathbf{V}_1$, $\mathbf{V}_2$ and two linear operators $\operatorname{T}_1\in\End_\F(\mathbf{V}_1)$ and $\operatorname{T}_2\in\End_\F(\mathbf{V}_2)$, an \textit{intertwiner} of $\operatorname{T}_1$ and $\operatorname{T}_2$ is a linear operator $\Delta\in\Hom_\F(\mathbf{V}_1,\mathbf{V}_2)$ such that $\Delta\operatorname{T}_1=\operatorname{T}_2\Delta$. We denote by $\mathrm{C}_\F(\operatorname{T}_1,\operatorname{T}_2)$ the $\F$-vector space of all intertwiners between $\operatorname{T}_1$ and $\operatorname{T}_2$, and we set $\mathrm{C}_\F(\operatorname{T}_1)\doteq\mathrm{C}_\F(\operatorname{T}_1,\operatorname{T}_1)$. For a square matrix $\operatorname{T}\in\End_\F(\F^n)$ we will denote by $\mathrm{0}\llcorner\operatorname{T}\in\Hom_\F(\F^{n+l},\F^{n+m})$ the rectangular matrix with $\operatorname{T}$ occupying its top right corner and zeros elsewhere.

First we find all intertwiners between two Jordan blocks.
\begin{lemma}\label{NCommutantLem}For every $m,n\in\mathbb{N}$,
$$
\mathrm{C}_\F(\operatorname{N}_m,\operatorname{N}_n)=\left\{\mathrm{0}\llcorner\Delta\in\Hom_\F(\F^m,\F^n)\,\vline\quad\Delta\in\F[\operatorname{N}_{\min\{m,n\}}]\right\}.
$$
\end{lemma}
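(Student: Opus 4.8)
The plan is to reduce everything to the observation that a single nilpotent Jordan block is a cyclic module. With the convention fixed by the matrix of $\operatorname{N}_n$, the operator $\operatorname{N}_m$ acts on the standard basis $e_1,\ldots,e_m$ of $\F^m$ as the lowering shift, $\operatorname{N}_m e_j=e_{j-1}$ and $\operatorname{N}_m e_1=0$, so that $e_m$ is a cyclic generator with $e_{m-r}=\operatorname{N}_m^r e_m$ for $0\le r\le m-1$ and $\operatorname{N}_m^m e_m=0$. The core of the proof will be to establish the linear bijection $\mathrm{C}_\F(\operatorname{N}_m,\operatorname{N}_n)\to\ker\operatorname{N}_n^m$ given by $\Delta\mapsto\Delta e_m$, and then to translate its inverse into the matrix form asserted in the statement.

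First I would show that an intertwiner $\Delta\in\mathrm{C}_\F(\operatorname{N}_m,\operatorname{N}_n)$ is completely determined by $w\doteq\Delta e_m$, since $\Delta e_{m-r}=\Delta\operatorname{N}_m^r e_m=\operatorname{N}_n^r w$ for all $0\le r\le m-1$. Applying $\Delta$ to $\operatorname{N}_m^m e_m=0$ forces $\operatorname{N}_n^m w=0$, so $w\in\ker\operatorname{N}_n^m=\Span\{e_1,\ldots,e_{\min\{m,n\}}\}$ (the shift $\operatorname{N}_n^m$ lowers indices by $m$). Conversely, for any such $w$ the prescription $\Delta e_{m-r}\doteq\operatorname{N}_n^r w$ defines a linear map, and checking the intertwining relation on each basis vector reduces, at the bottom step $e_1$, to exactly the condition $\operatorname{N}_n^m w=0$; hence $\Delta\mapsto\Delta e_m$ is a bijection onto $\ker\operatorname{N}_n^m$, a space of dimension $k\doteq\min\{m,n\}$.

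It then remains to read off the matrix of the reconstructed $\Delta$. Writing $w=\sum_{l=1}^{k}w_l e_l$ and expanding the columns $\Delta e_{m-r}=\operatorname{N}_n^{r}w$, one finds that $\Delta_{i,j}$ depends only on the diagonal index $j-i$, that $\Delta_{i,j}=0$ whenever $j-i<m-k$ (so that $\Delta$ vanishes off the top right $k\times k$ corner of the $n\times m$ array), and that within that corner $\Delta$ is upper-triangular Toeplitz. Collecting the common diagonal values as coefficients exhibits the corner block as $\sum_{d=0}^{k-1}c_d\operatorname{N}_k^d\in\F[\operatorname{N}_k]$ while all other entries vanish, which is precisely a matrix of the form $\mathrm{0}\llcorner\Delta$ with $\Delta\in\F[\operatorname{N}_{\min\{m,n\}}]$. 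This identifies the image of the bijection with the right-hand side of the claim.

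The step I expect to be the main obstacle is purely bookkeeping rather than conceptual: keeping the indices straight across the two regimes $m\le n$ (where $\Delta$ is tall and the corner is the top $m\times m$ block) and $m>n$ (where $\Delta$ is wide, its first $m-n$ columns vanish, and the corner sits in columns $m-n+1,\ldots,m$). Routing the whole argument through the single generator $e_m$ and the identity $w\in\ker\operatorname{N}_n^m$ is meant precisely to treat both regimes uniformly and to confirm, without a case split, that $\dim\ker\operatorname{N}_n^m=\min\{m,n\}$ and that the surviving block lands in the top right corner.
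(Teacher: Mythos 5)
Your proof is correct, and it takes a genuinely different route from the paper's. The paper argues entirely at the level of matrix entries: the relation $\Delta\operatorname{N}_m=\operatorname{N}_n\Delta$ is spelled out as the recurrence $\Delta_{k,l-1}=\Delta_{k+1,l}$ (with the boundary conventions $\Delta_{k,0}=\Delta_{n+1,l}=0$), which immediately forces $\Delta$ to be constant along the diagonal and superdiagonals of its top right largest square minor and zero elsewhere. You instead use the module-theoretic fact that $\F^m$ is cyclic over $\F[\operatorname{N}_m]$ with generator $e_m$: an intertwiner is determined by $w=\Delta e_m$, the relation $\operatorname{N}_m^m e_m=0$ forces $w\in\ker\operatorname{N}_n^m$, and conversely every $w$ in that kernel reconstructs an intertwiner, the only nontrivial check being at $e_1$, where exactly the condition $\operatorname{N}_n^m w=0$ is needed; this gives the linear bijection $\mathrm{C}_\F(\operatorname{N}_m,\operatorname{N}_n)\simeq\ker\operatorname{N}_n^m$, after which the Toeplitz corner form is read off from the columns $\operatorname{N}_n^r w$. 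What your route buys: the dimension $\min\{m,n\}$ falls out at once, the tall ($m\le n$) and wide ($m>n$) regimes need no case split, and the argument is really the computation of $\Hom_{\F[X]}\bigl(\F[X]/(X^m),\F[X]/(X^n)\bigr)$, so it generalizes verbatim to intertwiners of cyclic torsion modules over a PID — consonant with how the paper itself frames Jordanability via $\F[X]$-modules in Remark \ref{JordModRemark}. What the paper's route buys: it is shorter, needs no reconstruction step, and produces the explicit matrix shape in a single pass, which is the form actually consumed downstream in Proposition \ref{CJpnJqmProp} and in the automorphism and derivation descriptions.
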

\begin{proof}This can be proven by direct computation. The commutation relation $\Delta\operatorname{N}_m=\operatorname{N}_n\Delta$ for a matrix $\Delta\in\Hom_\F(\F^m,\F^n)$ literarily means $\Delta_{k,l-1}=\Delta_{k+1,l}$ for all $1\le k\le n$, $1\le l\le m$ if we set for consistency $\Delta_{k,0}=\Delta_{n+1,l}=0$. This forces $\Delta$ to be constant on the diagonal and all superdiagonals of its top right largest square minor, and zero everywhere else, which is equivalent to the assertion. $\Box$
\end{proof}
\begin{lemma}\label{DeltaxpNestedCommLemma} For every $\Delta\in\mathrm{C}_\F(\operatorname{J}(p,m),\operatorname{J}(p,n))$ and $k\in\mathbb{N}$, the $k$-fold nested commutator satisfies
$$
\Delta_k\doteq[...[\Delta,x_p\id],...,x_p\id]=\sum_{l=0}^k(-1)^l\binom{k}{l}(\operatorname{N}_n\otimes\id_{\deg p})^{k-l}\Delta(\operatorname{N}_m\otimes\id_{\deg p})^l.
$$
\end{lemma}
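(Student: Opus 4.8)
The plan is to observe that, after passing to $\F$-block form, the whole statement is nothing but the binomial expansion of a $k$-th power of a difference of two \emph{commuting} operations, whose base case $k=1$ is the intertwining hypothesis itself. Recall from (\ref{JpnDef}) and the conventions of (\ref{MatrixTensorProd}) that as $\F$-matrices
$$\operatorname{J}(p,m)=\id_m\otimes x_p+\operatorname{N}_m\otimes\id_{\deg p},\qquad\operatorname{J}(p,n)=\id_n\otimes x_p+\operatorname{N}_n\otimes\id_{\deg p}.$$
First I would rewrite the hypothesis $\Delta\operatorname{J}(p,m)=\operatorname{J}(p,n)\Delta$ by separating the $x_p$-part from the nilpotent part, which yields
$$[\Delta,x_p\id]=\Delta(\id_m\otimes x_p)-(\id_n\otimes x_p)\Delta=(\operatorname{N}_n\otimes\id_{\deg p})\Delta-\Delta(\operatorname{N}_m\otimes\id_{\deg p}).$$
This is exactly the asserted formula for $k=1$, so the base case requires nothing beyond reading off the intertwining relation. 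From now on I abbreviate $\operatorname{N}_n\otimes\id_{\deg p}$ and $\operatorname{N}_m\otimes\id_{\deg p}$ by $\operatorname{N}_n$ and $\operatorname{N}_m$.

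The observation that drives the induction is that the single commutator slides past powers of $\operatorname{N}_n$ on the left and of $\operatorname{N}_m$ on the right. Indeed, by the mixed-product rule $(\operatorname{N}\otimes\id_{\deg p})(\id\otimes x_p)=\operatorname{N}\otimes x_p=(\id\otimes x_p)(\operatorname{N}\otimes\id_{\deg p})$, so left (resp. right) multiplication by $\id_n\otimes x_p$ (resp. $\id_m\otimes x_p$) commutes with left multiplication by $\operatorname{N}_n$ and with right multiplication by $\operatorname{N}_m$; left and right multiplications always commute. Consequently, for all exponents $a,b\ge0$,
$$[\,\operatorname{N}_n^{\,a}\Delta\operatorname{N}_m^{\,b}\,,\,x_p\id\,]=\operatorname{N}_n^{\,a}\,[\Delta,x_p\id]\,\operatorname{N}_m^{\,b}=\operatorname{N}_n^{\,a+1}\Delta\operatorname{N}_m^{\,b}-\operatorname{N}_n^{\,a}\Delta\operatorname{N}_m^{\,b+1},$$
where the last equality uses the base case. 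With this sliding identity the induction on $k$ is routine: applying $[\,\cdot\,,x_p\id]$ termwise to $\Delta_k=\sum_{l=0}^k(-1)^l\binom{k}{l}\operatorname{N}_n^{\,k-l}\Delta\operatorname{N}_m^{\,l}$ produces two sums which, after shifting the index in one of them, combine through the Pascal relation $\binom{k}{l}+\binom{k}{l-1}=\binom{k+1}{l}$ into the formula for $\Delta_{k+1}$.

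Equivalently, and perhaps more transparently, I would phrase this operator-theoretically on $\Hom_\F(\Fp^m,\Fp^n)$: writing $L_A$ and $R_A$ for left and right multiplication by a matrix $A$, the map $[\,\cdot\,,x_p\id]=R_{\id_m\otimes x_p}-L_{\id_n\otimes x_p}$ commutes with both $L_{\operatorname{N}_n}$ and $R_{\operatorname{N}_m}$, while agreeing with $L_{\operatorname{N}_n}-R_{\operatorname{N}_m}$ \emph{when evaluated at $\Delta$}. A one-line induction then gives $\Delta_k=(L_{\operatorname{N}_n}-R_{\operatorname{N}_m})^k\Delta$, since at each step $[\,\cdot\,,x_p\id]$ passes through the accumulated power $(L_{\operatorname{N}_n}-R_{\operatorname{N}_m})^{k}$ before hitting $\Delta$; because $L_{\operatorname{N}_n}$ and $R_{\operatorname{N}_m}$ commute, the binomial theorem expands this into the stated sum. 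I do not expect a genuine obstacle: the only real content is the base case together with the commutation of the $x_p$- and $\operatorname{N}$-multiplications, and the step demanding most care is simply the bookkeeping between $\Fp$-matrices and their $\F$-block realisations, so that $x_p$ consistently appears as $\id\otimes x_p$ and $\operatorname{N}$ as $\operatorname{N}\otimes\id_{\deg p}$ and the two genuinely commute.
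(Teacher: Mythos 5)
Your proposal is correct and follows essentially the same route as the paper: induction on $k$ with the base case read off from the intertwining relation $\Delta\operatorname{J}(p,m)=\operatorname{J}(p,n)\Delta$, the commutator $[\,\cdot\,,x_p\id]$ slid through powers of $\operatorname{N}_n\otimes\id_{\deg p}$ and $\operatorname{N}_m\otimes\id_{\deg p}$, and the two resulting sums recombined via the Pascal identity $\binom{k}{l}+\binom{k}{l-1}=\binom{k+1}{l}$. Your explicit justification of the sliding step (commutativity of $\id\otimes x_p$ with $\operatorname{N}\otimes\id_{\deg p}$), which the paper uses tacitly, and your $L_A$/$R_A$ reformulation are welcome clarifications but not a different argument.
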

\begin{proof} The proof is based on induction by $k$. From $\Delta\operatorname{J}(p,m)-\operatorname{J}(p,n)\Delta=0$ and 
$$
\operatorname{J}(p,n)=\id_n\otimes x_p+\operatorname{N}_n\otimes\id_{\deg p},\quad\forall n\in\mathbb{N},
$$
we verify the statement for $k=1$,
$$
\Delta_1=[\Delta,x_p\id]=\Delta(\id_m\otimes x_p)-(\id_n\otimes x_p)\Delta=(\operatorname{N}_n\otimes\id_{\deg p})\Delta-\Delta(\operatorname{N}_m\otimes\id_{\deg p}).
$$
Now suppose that the statement is true for some $k\in\mathbb{N}$. We establish that
$$
\Delta_{k+1}=\Delta_k(\id_m\otimes x_p)-(\id_n\otimes x_p)\Delta_k
$$
$$
=\sum_{l=0}^k(-1)^l\binom{k}{l}(\operatorname{N}_n\otimes\id_{\deg p})^{k-l}[\Delta,x_p\id](\operatorname{N}_m\otimes\id_{\deg p})^l
$$
$$
=\sum_{l=0}^k(-1)^l\binom{k}{l}(\operatorname{N}_n\otimes\id_{\deg p})^{k+1-l}\Delta(\operatorname{N}_m\otimes\id_{\deg p})^l
$$
$$
+\sum_{l=1}^{k+1}(-1)^l\binom{k}{l}(\operatorname{N}_n\otimes\id_{\deg p})^{k+1-l}\Delta(\operatorname{N}_m\otimes\id_{\deg p})^l
$$
$$
=\sum_{l=0}^{k+1}(-1)^l\binom{k+1}{l}(\operatorname{N}_n\otimes\id_{\deg p})^{k+1-l}\Delta(\operatorname{N}_m\otimes\id_{\deg p})^l,
$$
where in the last step we used the familiar properties of binomial coefficients,
$$
\binom{k}{l}+\binom{k}{l-1}=\binom{k+1}{l},\quad\binom{k}{k+1}=\binom{k}{-1}=0.
$$
The proof is complete. $\Box$
\end{proof}
\begin{proposition}\label{CJpnJqmProp} For every $p,q\in\sigma_\F$ and $m,n\in\mathbb{N}$,
$$
\mathrm{C}_\F(\operatorname{J}(q,m),\operatorname{J}(p,n))=\delta_{p,q}\mathrm{C}_\Fp(\operatorname{N}_m,\operatorname{N}_n).
$$
\end{proposition}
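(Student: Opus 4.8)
The plan is to separate the cases $p\neq q$ and $p=q$, the Kronecker symbol $\delta_{p,q}$ reflecting that blocks attached to distinct irreducibles admit no nonzero intertwiner. For $p\neq q$ I would reuse the Bezout argument from the proof of Proposition \ref{DilSimProp}. Any $\Delta\in\mathrm{C}_\F(\operatorname{J}(q,m),\operatorname{J}(p,n))$ satisfies $\Delta\,f(\operatorname{J}(q,m))=f(\operatorname{J}(p,n))\,\Delta$ for every $f\in\F[X]$; taking $f=q^m$ annihilates $\operatorname{J}(q,m)$ and leaves $q^m(\operatorname{J}(p,n))\Delta=0$. Since $p\neq q$ are distinct monic irreducibles the polynomials $q^m$ and $p^n$ are coprime, and as $p^n(\operatorname{J}(p,n))=0$ a Bezout identity exhibits $q^m(\operatorname{J}(p,n))$ as invertible, forcing $\Delta=0$.

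For $p=q$ I would prove the two inclusions of the asserted equality. The inclusion $\mathrm{C}_\Fp(\operatorname{N}_m,\operatorname{N}_n)\subset\mathrm{C}_\F(\operatorname{J}(p,m),\operatorname{J}(p,n))$ is a one-line computation: using $\operatorname{J}(p,n)=\id_n\otimes x_p+\operatorname{N}_n\otimes\id_{\deg p}$ and the fact that an $\Fp$-linear $\Delta$ commutes with $\id\otimes x_p$, the hypothesis $\Delta\operatorname{N}_m=\operatorname{N}_n\Delta$ propagates directly to $\Delta\operatorname{J}(p,m)=\operatorname{J}(p,n)\Delta$.

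The reverse inclusion is the heart of the matter, and the key is to show that any $\Delta\in\mathrm{C}_\F(\operatorname{J}(p,m),\operatorname{J}(p,n))$ is automatically $\Fp$-linear, i.e. that the first nested commutator $\Delta_1=[\Delta,x_p\id]$ vanishes. Lemma \ref{DeltaxpNestedCommLemma} writes the $k$-fold nested commutator $\Delta_k$ as an alternating sum of terms $(\operatorname{N}_n\otimes\id)^{k-l}\Delta(\operatorname{N}_m\otimes\id)^l$; because $\operatorname{N}_n$ and $\operatorname{N}_m$ are nilpotent of orders $n$ and $m$, each term dies once $k\ge m+n-1$, so $\Delta_k=0$ for all sufficiently large $k$. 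Now I would descend: since $\Delta_{k+1}=[\Delta_k,x_p\id]$, whenever $\Delta_{k+1}$ is $\Fp$-linear Corollary \ref{xpCommFptoFpCor} applied to $\Delta_k$ forces $\Delta_k$ to be $\Fp$-linear as well. Descending from the vanishing top commutators down to $k=0$ yields that $\Delta=\Delta_0$ is $\Fp$-linear, hence $\Delta_1=0$; the $k=1$ instance of the lemma then reads $(\operatorname{N}_n\otimes\id)\Delta=\Delta(\operatorname{N}_m\otimes\id)$, which over $\Fp$ is exactly the statement $\Delta\in\mathrm{C}_\Fp(\operatorname{N}_m,\operatorname{N}_n)$.

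I expect the descent establishing $\Fp$-linearity to be the only genuine obstacle. Its subtlety is that Corollary \ref{xpCommFptoFpCor}, which repackages the self-normalizing property of $\Fp$, can only be triggered once one already knows that some higher nested commutator is $\Fp$-linear; the nilpotency bound coming from Lemma \ref{DeltaxpNestedCommLemma} is precisely what supplies this starting point by making the top commutators vanish. Everything else reduces to bookkeeping with the tensor-product block structure.
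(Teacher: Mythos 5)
Your proposal is correct and follows essentially the same route as the paper: the annihilation argument via $q^m(\operatorname{J}(p,n))$ for $p\neq q$, and for $p=q$ the vanishing of high nested commutators from Lemma \ref{DeltaxpNestedCommLemma} followed by the downward iteration of Corollary \ref{xpCommFptoFpCor} to force $\Fp$-linearity of $\Delta$ and hence $\Delta\in\mathrm{C}_\Fp(\operatorname{N}_m,\operatorname{N}_n)$. The only cosmetic differences are that you certify invertibility of $q^m(\operatorname{J}(p,n))$ by a Bezout identity where the paper inspects its upper block-triangular form with nonzero diagonal blocks $q^m(x_p)$, and you note the slightly sharper nilpotency threshold $k\ge m+n-1$ where the paper takes $k=m+n$.
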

\begin{proof} Let $\Delta\operatorname{J}(q,m)=\operatorname{J}(p,n)\Delta$. Then $\Delta q^m(\operatorname{J}(q,m))=0=q^m(\operatorname{J}(p,n))\Delta$. But if $p\neq q$ then $q^m(\operatorname{J}(p,n))$ is invertible because it is upper block-triangular with all diagonal blocks being equal to $0\neq q^m(x_p)\in\Fp$, which forces $\Delta=0$. This explains the factor $\delta_{p,q}$ in the statement. Now assume $p=q$ and let $\Delta\operatorname{J}(p,m)=\operatorname{J}(p,n)\Delta$. Then by Lemma \ref{DeltaxpNestedCommLemma} we have
\begin{equation}
\Delta_k\doteq[...[\Delta,x_p\id],...,x_p\id]=\sum_{l=0}^k(-1)^l\binom{k}{l}(\operatorname{N}_n\otimes\id_{\deg p})^{k-l}\Delta(\operatorname{N}_m\otimes\id_{\deg p})^l.\label{NestedCommDeltaxp}
\end{equation}
If we set $k=m+n$ then the right hand side of (\ref{NestedCommDeltaxp}) vanishes, because each summand contains a factor $\mbox{N}_n^k$ with $k\ge n$. This gives $\Delta_{m+n}=[\Delta_{m+n-1},x_p\id]=0$. Now if we apply Corollary \ref{xpCommFptoFpCor} to $\Delta_k$ iteratively from $k=m+n-1$ to $k=1$ we will eventually establish that $\Delta\in\Hom_\Fp(\Fp^m,\Fp^n)$. This will also imply that
$$
\operatorname{N}_n\Delta-\Delta\operatorname{N}_m=[\Delta,x_p\id]=0\in\End_\Fp(\Fp^m,\Fp^n),
$$
so that $\Delta\in\mathrm{C}_\Fp(\operatorname{N}_m,\operatorname{N}_n)$, as desired. Conversely, every $\Delta\in\mathrm{C}_\Fp(\operatorname{N}_m,\operatorname{N}_n)$ satisfies $\Delta\mbox{J}(p,m)=\mbox{J}(p,n)\Delta$. $\Box$
\end{proof}

Next we will describe the explicit solutions of certain linear equations involving Jordanable operators. Let us first introduce the set of invertible matrices
$$
\operatorname{V}_n(\lambda)\doteq\diag(\lambda,\ldots,\lambda^n)=\begin{pmatrix}
\lambda & 0 & \ldots & 0\\
0 & \lambda^2 & \ldots & 0\\
\ldots & \ldots & \ldots & \ldots\\
0 & 0 & \ldots & \lambda^n
\end{pmatrix},\quad\forall\lambda\in\F^*,\quad\forall n\in\mathbb{N}.
$$
These matrices have a few useful properties.
\begin{lemma}\label{xlambdapLemma} For all $p\in\sigma_\F$ and $\lambda\in\F^*$ we have
$$
\operatorname{V}_{\deg p}(\lambda|\lambda|^{\epsilon-1})x_{\lambda\star p}=\lambda x_p\operatorname{V}_{\deg p}(\lambda|\lambda|^{\epsilon-1})\in\End_\F(\Fp),
$$
that is, $\operatorname{V}_{\deg p}(\lambda|\lambda|^{\epsilon-1})\in\mathrm{C}_\F(x_{\lambda\star p},\lambda x_p)$, where $\epsilon$ is per (\ref{epsilonDef}).
\end{lemma}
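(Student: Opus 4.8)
The plan is to rewrite the asserted identity $\operatorname{V}_{\deg p}(\mu)\,x_{\lambda\star p}=\lambda x_p\,\operatorname{V}_{\deg p}(\mu)$, where $\mu\doteq\lambda|\lambda|^{\epsilon-1}$, as the conjugation statement $\operatorname{V}_{\deg p}(\mu)\,x_{\lambda\star p}\,\operatorname{V}_{\deg p}(\mu)^{-1}=\lambda x_p$. This reformulation is legitimate because $\mu\neq0$ (as $\lambda\neq0$), so $\operatorname{V}_{\deg p}(\mu)=\diag(\mu,\ldots,\mu^{\deg p})$ is invertible. The only computational tool needed is the effect of conjugating by this diagonal matrix: for any $\deg p\times\deg p$ matrix $A$ one has $(\operatorname{V}_{\deg p}(\mu)\,A\,\operatorname{V}_{\deg p}(\mu)^{-1})_{i,j}=\mu^{i-j}A_{i,j}$. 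From here I would split the verification along the two conventions recorded in (\ref{epsilonDef}): the general companion form (\ref{xpMatrixForm}), where $\epsilon=1$ and hence $\mu=\lambda$, and the real rotation-scaling form (\ref{xpMatrixFormR}), where $\F=\mathbb{R}$, $\deg p=2$, $\epsilon=0$ and hence $\mu=\sgn\lambda$.

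For the case $\epsilon=1$, I would first read off the companion form of $x_{\lambda\star p}$. Writing $p(X)=X^{\deg p}+\sum_{k=0}^{\deg p-1}a_kX^k$, Remark \ref{DilInvRemark} gives the coefficients of $\lambda\star p$ as $\lambda^{\deg p-k}a_k$, so the last column of $x_{\lambda\star p}$ has entries $-\lambda^{\deg p-i+1}a_{i-1}$ in row $i$, while its subdiagonal entries are all $1$ and every other entry vanishes. Applying the conjugation formula with $\mu=\lambda$, each subdiagonal $1$ is multiplied by $\mu^{(i+1)-i}=\lambda$, reproducing the subdiagonal $\lambda$ of $\lambda x_p$; and each last-column entry in row $i$ is multiplied by $\mu^{i-\deg p}=\lambda^{i-\deg p}$, turning $-\lambda^{\deg p-i+1}a_{i-1}$ into $-\lambda a_{i-1}$, which is precisely the last column of $\lambda x_p$. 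Since all remaining entries vanish on both sides and conjugation by a diagonal matrix preserves this zero pattern, the identity holds.

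For the case $\epsilon=0$ I would instead track the roots. The roots of $p$ are $a\pm ib$ with $b\ge0$, and by the dilation action of Remark \ref{DilInvRemark} the roots of $\lambda\star p$ are $\lambda a\pm i\lambda b$. Imposing the normalization $b\ge0$ of (\ref{xpMatrixFormR}) on $\lambda\star p$ forces the new parameters to be $a'=\lambda a$ and $b'=|\lambda|b$, regardless of the sign of the real scalar $\lambda$; this is the exact point at which the absolute value enters. Thus $x_{\lambda\star p}=\left(\begin{smallmatrix}\lambda a&-|\lambda|b\\|\lambda|b&\lambda a\end{smallmatrix}\right)$. Conjugating by $\operatorname{V}_2(\sgn\lambda)=\diag(\sgn\lambda,1)$ leaves the diagonal entries $\lambda a$ fixed (multiplied by $\mu^0=1$) and multiplies the $(1,2)$ and $(2,1)$ entries by $\mu^{-1}=\sgn\lambda$ and $\mu=\sgn\lambda$ respectively, converting $-|\lambda|b$ and $|\lambda|b$ into $-\lambda b$ and $\lambda b$. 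The resulting matrix is exactly $\lambda x_p$, as desired.

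The computations themselves are routine; the one place that genuinely requires care — and the reason $\mu$ carries the factor $|\lambda|^{\epsilon-1}$ rather than simply being $\lambda$ — is the bookkeeping in the $\epsilon=0$ case. There the convention $b\ge0$ breaks the naive equivariance of the root map under negative dilations, introducing a $|\lambda|$ that must be absorbed by choosing $\mu=\sgn\lambda$. For every other field, where $\epsilon=1$, the factor $|\lambda|^{\epsilon-1}=1$ disappears and $\mu=\lambda$ suffices, so the two cases are unified by the single expression $\mu=\lambda|\lambda|^{\epsilon-1}$.
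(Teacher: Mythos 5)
Your proposal is correct and is essentially the paper's own argument: the paper disposes of this lemma with ``Proof is by direct computation of matrix products,'' and your entrywise verification via conjugation by $\operatorname{V}_{\deg p}(\mu)$ (with the two cases $\epsilon=1$ and $\epsilon=0$ treated separately) is exactly that computation carried out in full. You also correctly identify the one subtle point the paper leaves implicit, namely that the normalization $b\ge0$ in (\ref{xpMatrixFormR}) is what forces the factor $|\lambda|^{\epsilon-1}$ when $\lambda<0$.
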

\begin{proof} Proof is by direct computation of matrix products. $\Box$
\end{proof}
\begin{lemma}\label{lambdaJLemma} For all $n\in\mathbb{N}$, $p\in\sigma_\F$ and $\lambda\in\F^*$ we have
$$
\lambda\operatorname{J}(p,n)\bigl(\operatorname{V}_n(\lambda^{-1})\otimes\operatorname{V}_{\deg p}(\lambda|\lambda|^{\epsilon-1})\bigr)=\bigl(\operatorname{V}_n(\lambda^{-1})\otimes\operatorname{V}_{\deg p}(\lambda|\lambda|^{\epsilon-1})\bigr)\operatorname{J}(\lambda\star p,n),
$$
that is, $\operatorname{V}_n(\lambda^{-1})\otimes\operatorname{V}_{\deg p}(\lambda|\lambda|^{\epsilon-1})\in\mathrm{C}_\F(\operatorname{J}(\lambda\star p,n),\lambda\operatorname{J}(p,n))$, where $\epsilon$ is per (\ref{epsilonDef}).
\end{lemma}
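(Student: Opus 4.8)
The plan is to verify the intertwining relation directly by expanding both sides in the tensor-product notation of (\ref{MatrixTensorProd}) and splitting it into two independent pieces, each of which is already under control. Write $\operatorname{J}(p,n)=\id_n\otimes x_p+\operatorname{N}_n\otimes\id_{\deg p}$ and likewise $\operatorname{J}(\lambda\star p,n)=\id_n\otimes x_{\lambda\star p}+\operatorname{N}_n\otimes\id_{\deg p}$ as $\F$-block matrices, and abbreviate the candidate intertwiner as $\operatorname{S}=A\otimes B$ with $A\doteq\operatorname{V}_n(\lambda^{-1})$ and $B\doteq\operatorname{V}_{\deg p}(\lambda|\lambda|^{\epsilon-1})$.

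First I would expand both products using the rule $(C\otimes D)(E\otimes F)=CE\otimes DF$. On the left this gives $\lambda\operatorname{J}(p,n)\operatorname{S}=\lambda(A\otimes x_pB)+\lambda(\operatorname{N}_nA\otimes B)$, and on the right $\operatorname{S}\operatorname{J}(\lambda\star p,n)=(A\otimes Bx_{\lambda\star p})+(A\operatorname{N}_n\otimes B)$. Since $A$ is diagonal and $\operatorname{N}_n$ is strictly superdiagonal, the terms carrying $A$ in the first tensor slot are block-diagonal while the terms carrying $\operatorname{N}_n$ there are block-superdiagonal; these occupy disjoint block positions, so the two sides agree if and only if the block-diagonal and block-superdiagonal contributions match separately.

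The block-diagonal identity is $\lambda(A\otimes x_pB)=A\otimes Bx_{\lambda\star p}$, which reduces to $\lambda x_pB=Bx_{\lambda\star p}$ in the $\deg p$-dimensional slot; but this is precisely the content of Lemma \ref{xlambdapLemma}, so nothing remains to be shown there. The block-superdiagonal identity is $\lambda(\operatorname{N}_nA\otimes B)=A\operatorname{N}_n\otimes B$, which reduces to $\lambda\operatorname{N}_nA=A\operatorname{N}_n$. This last relation is a one-line computation: the only nonzero entries of both sides lie on the superdiagonal, and at position $(i,i+1)$ one finds $\lambda\cdot\lambda^{-(i+1)}=\lambda^{-i}$ on the left against $\lambda^{-i}$ on the right, so they coincide.

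In short, the whole statement factors into the already-established Lemma \ref{xlambdapLemma} in the field-extension slot and the trivial diagonal commutation $\lambda\operatorname{N}_n\operatorname{V}_n(\lambda^{-1})=\operatorname{V}_n(\lambda^{-1})\operatorname{N}_n$ in the Jordan-shift slot. I therefore expect no genuine obstacle here; the only point that deserves care is the separation of variables in the preceding paragraph, namely justifying that the diagonal and superdiagonal parts may be matched independently rather than accidentally cancelling against one another — which is guaranteed by the disjointness of their block supports.
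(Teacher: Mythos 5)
Your proof is correct, and it rests on exactly the same two pillars as the paper's: the relation $\operatorname{V}_{\deg p}(\lambda|\lambda|^{\epsilon-1})x_{\lambda\star p}=\lambda x_p\operatorname{V}_{\deg p}(\lambda|\lambda|^{\epsilon-1})$ of Lemma \ref{xlambdapLemma}, and the elementary commutation $\lambda\operatorname{N}_n\operatorname{V}_n(\lambda^{-1})=\operatorname{V}_n(\lambda^{-1})\operatorname{N}_n$. The difference is purely organizational. The paper verifies the identity by one monolithic chain of explicit block-matrix multiplications, in which the superdiagonal commutation appears only implicitly as bookkeeping of the powers $\lambda^{-i}$, and Lemma \ref{xlambdapLemma} is invoked midway to convert the diagonal blocks. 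You instead split $\operatorname{J}(p,n)=\id_n\otimes x_p+\operatorname{N}_n\otimes\id_{\deg p}$ (and likewise for $\operatorname{J}(\lambda\star p,n)$), apply the mixed-product rule $(C\otimes D)(E\otimes F)=CE\otimes DF$, and match the block-diagonal and block-superdiagonal contributions separately. This buys modularity and transparency --- each one-slot identity is isolated, named, and checked once, with no large displays --- at the cost of invoking the mixed-product rule for the Kronecker convention (\ref{MatrixTensorProd}), which is standard but not stated in the paper; strictly speaking you should note it holds here because all factors are genuine $\F$-matrices of compatible sizes. One small logical remark: for the proof you only need the ``if'' direction of your ``if and only if'' (equality of the two pieces implies equality of the sums), so the disjoint-support argument, while correct, is not actually needed; it would matter only if you wanted to argue necessity of the separate matching.
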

\begin{proof} Proof is by direct computation of products of block matrices,
$$
\lambda\operatorname{J}(p,n)\bigl(\operatorname{V}_n(\lambda^{-1})\otimes\operatorname{V}_{\deg p}(\lambda|\lambda|^{\epsilon-1})\bigr)
$$
$$
=\begin{pmatrix}
\lambda x_p & \lambda\id_{\deg p} & 0 & \ldots & 0\\
0 & \lambda x_p & \lambda\id_{\deg p} & \ldots & 0\\
\ldots & \ldots & \ldots & \ldots & \ldots\\
0 & 0 & 0 & \ldots & \lambda x_p
\end{pmatrix}\begin{pmatrix}
\lambda^{-1}\operatorname{V}_{\deg p}(\lambda|\lambda|^{\epsilon-1}) & 0 & 0 & \ldots & 0\\
0 & \lambda^{-2}\operatorname{V}_{\deg p}(\lambda|\lambda|^{\epsilon-1}) & 0 & \ldots & 0\\
\ldots & \ldots & \ldots & \ldots & \ldots\\
0 & 0 & 0 & \ldots & \lambda^{-n}\operatorname{V}_{\deg p}(\lambda|\lambda|^{\epsilon-1})
\end{pmatrix}
$$
$$
=\begin{pmatrix}
x_p\operatorname{V}_{\deg p}(\lambda|\lambda|^{\epsilon-1}) & \lambda^{-1}\operatorname{V}_{\deg p}(\lambda|\lambda|^{\epsilon-1}) & 0 & \ldots & 0\\
0 & \lambda^{-1}x_p\operatorname{V}_{\deg p}(\lambda|\lambda|^{\epsilon-1}) & \lambda^{-2}\operatorname{V}_{\deg p}(\lambda|\lambda|^{\epsilon-1}) & \ldots & 0\\
\ldots & \ldots & \ldots & \ldots & \ldots\\
0 & 0 & 0 & \ldots & \lambda^{-n+1}x_p\operatorname{V}_{\deg p}(\lambda|\lambda|^{\epsilon-1})
\end{pmatrix}
$$
$$
=\begin{pmatrix}
\lambda^{-1}\operatorname{V}_{\deg p}(\lambda|\lambda|^{\epsilon-1})x_{\lambda\star p} & \lambda^{-1}\operatorname{V}_{\deg p}(\lambda|\lambda|^{\epsilon-1}) & 0 & \ldots & 0\\
0 & \lambda^{-2}\operatorname{V}_{\deg p}(\lambda|\lambda|^{\epsilon-1})x_{\lambda\star p} & \lambda^{-2}\operatorname{V}_{\deg p}(\lambda|\lambda|^{\epsilon-1}) & \ldots & 0\\
\ldots & \ldots & \ldots & \ldots & \ldots\\
0 & 0 & 0 & \ldots & \lambda^{-n}\operatorname{V}_{\deg p}(\lambda|\lambda|^{\epsilon-1})x_{\lambda\star p}
\end{pmatrix}
$$
$$
=\begin{pmatrix}
\lambda^{-1}\operatorname{V}_{\deg p}(\lambda|\lambda|^{\epsilon-1}) & 0 & 0 & \ldots & 0\\
0 & \lambda^{-2}\operatorname{V}_{\deg p}(\lambda|\lambda|^{\epsilon-1}) & 0 & \ldots & 0\\
\ldots & \ldots & \ldots & \ldots & \ldots\\
0 & 0 & 0 & \ldots & \lambda^{-n}\operatorname{V}_{\deg p}(\lambda|\lambda|^{\epsilon-1})
\end{pmatrix}\begin{pmatrix}
x_{\lambda\star p} & \id_{\deg p} & 0 & \ldots & 0\\
0 & x_{\lambda\star p} & \id_{\deg p} & \ldots & 0\\
\ldots & \ldots & \ldots & \ldots & \ldots\\
0 & 0 & 0 & \ldots & x_{\lambda\star p}
\end{pmatrix}
$$
$$
=\bigl(\operatorname{V}_n(\lambda^{-1})\otimes\operatorname{V}_{\deg p}(\lambda|\lambda|^{\epsilon-1})\bigr)\operatorname{J}(\lambda\star p,n),
$$
where in the third step we used Lemma \ref{xlambdapLemma}. $\Box$
\end{proof}

We are now ready to solve the equation $\operatorname{X}\operatorname{T}=\lambda\operatorname{T}\operatorname{X}$ for a Jordanable operator $\operatorname{T}$ and a nonzero number $\lambda\in\F^*$.
\begin{proposition}\label{lambdaCommProp} For a Jordanable operator $\operatorname{T}=\operatorname{S}^{-1}\operatorname{J}(\aleph)\operatorname{S}$ and number $\lambda\in\F^*$ it holds
$$
\mathrm{C}_\F(\operatorname{T},\lambda\operatorname{T})=\operatorname{S}^{-1}\mathrm{C}_\F(\operatorname{J}(\aleph),\lambda\operatorname{J}(\aleph))\operatorname{S},
$$
$$
\mathrm{C}_\F(\operatorname{J}(\aleph),\lambda\operatorname{J}(\aleph))=\operatorname{V}(\lambda;\aleph)\cdot\mathrm{C}_\F(\operatorname{J}(\aleph),\operatorname{J}(\lambda\star\aleph)),
$$
$$
\operatorname{V}(\lambda;\aleph)\doteq\bigoplus_{p\in\sigma_\F}\bigoplus_{n=1}^\infty\bigoplus_{\aleph(p,n)}\bigl(\operatorname{V}_n(\lambda^{-1})\otimes\operatorname{V}_{\deg p}(\lambda|\lambda|^{\epsilon-1})\bigr),
$$
where $\epsilon$ is per (\ref{epsilonDef}). Every operator $\operatorname{R}\in\mathrm{C}_\F(\operatorname{J}(\aleph),\operatorname{J}(\lambda\star\aleph))$ is a block operator with arbitrary blocks of the form
$$
\operatorname{R}_{q,n,\beta;p,m,\alpha}\in\delta_{p,\lambda\star q}\mathrm{C}_\Fp(\operatorname{N}_m,\operatorname{N}_n),\quad\forall p,q\in\sigma_\F,\quad\forall m,n\in\mathbb{N},\quad\forall\alpha\in\aleph(p,m),\quad\forall\beta\in\aleph(q,n)
$$
such that there are finitely many nenzero entries in every column of $\operatorname{R}$.
\end{proposition}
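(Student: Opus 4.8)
The plan is to prove the four displayed assertions in the order they are stated, reducing each to a result already established. First I would dispose of the similarity transport. Writing $\operatorname{T}=\operatorname{S}^{-1}\operatorname{J}(\aleph)\operatorname{S}$ and substituting $\operatorname{X}=\operatorname{S}^{-1}\operatorname{Y}\operatorname{S}$ into $\operatorname{X}\operatorname{T}=\lambda\operatorname{T}\operatorname{X}$, the outer factors $\operatorname{S},\operatorname{S}^{-1}$ cancel on both sides, so the equation is equivalent to $\operatorname{Y}\operatorname{J}(\aleph)=\lambda\operatorname{J}(\aleph)\operatorname{Y}$. Because $\operatorname{S}$ is invertible, $\operatorname{X}\leftrightarrow\operatorname{Y}$ is a linear bijection between the two intertwiner spaces, which is exactly $\mathrm{C}_\F(\operatorname{T},\lambda\operatorname{T})=\operatorname{S}^{-1}\mathrm{C}_\F(\operatorname{J}(\aleph),\lambda\operatorname{J}(\aleph))\operatorname{S}$ and reduces the whole problem to the canonical form.

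Next I would treat the factorization and the explicit form of $\operatorname{V}(\lambda;\aleph)$ together. By Lemma \ref{lambdaJLemma} each block $\operatorname{V}_n(\lambda^{-1})\otimes\operatorname{V}_{\deg p}(\lambda|\lambda|^{\epsilon-1})$ intertwines $\operatorname{J}(\lambda\star p,n)$ with $\lambda\operatorname{J}(p,n)$; assembling these over the Jordan decomposition exactly as in the proof of Proposition \ref{DilSimMultProp}, the direct sum $\operatorname{V}(\lambda;\aleph)$ satisfies $\operatorname{V}(\lambda;\aleph)\operatorname{J}(\lambda\star\aleph)=\lambda\operatorname{J}(\aleph)\operatorname{V}(\lambda;\aleph)$, and it is invertible as a direct sum of invertible diagonal blocks. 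I would then check the two inclusions defining the product set $\{\operatorname{V}(\lambda;\aleph)\operatorname{R}\mid\operatorname{R}\in\mathrm{C}_\F(\operatorname{J}(\aleph),\operatorname{J}(\lambda\star\aleph))\}$: given $\operatorname{R}$ with $\operatorname{R}\operatorname{J}(\aleph)=\operatorname{J}(\lambda\star\aleph)\operatorname{R}$, a one-line computation with the relation for $\operatorname{V}(\lambda;\aleph)$ shows $\operatorname{V}(\lambda;\aleph)\operatorname{R}\in\mathrm{C}_\F(\operatorname{J}(\aleph),\lambda\operatorname{J}(\aleph))$; conversely, for $\operatorname{Y}\in\mathrm{C}_\F(\operatorname{J}(\aleph),\lambda\operatorname{J}(\aleph))$ the operator $\operatorname{V}(\lambda;\aleph)^{-1}\operatorname{Y}$ lands in $\mathrm{C}_\F(\operatorname{J}(\aleph),\operatorname{J}(\lambda\star\aleph))$. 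This establishes the middle identity.

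For the block description of $\operatorname{R}\in\mathrm{C}_\F(\operatorname{J}(\aleph),\operatorname{J}(\lambda\star\aleph))$ I would use that both $\operatorname{J}(\aleph)$ and $\operatorname{J}(\lambda\star\aleph)$ are block-diagonal relative to their Jordan decompositions, the latter re-indexed as $\bigoplus_{p,n,\alpha}\operatorname{J}(\lambda\star p,n)$ with $\alpha\in\aleph(p,n)$ as in Proposition \ref{DilSimMultProp}. Writing $\operatorname{R}$ as a block family $(\operatorname{R}_{q,n,\beta;p,m,\alpha})$ and comparing the $(q,n,\beta;p,m,\alpha)$ block of $\operatorname{R}\operatorname{J}(\aleph)=\operatorname{J}(\lambda\star\aleph)\operatorname{R}$ isolates $\operatorname{R}_{q,n,\beta;p,m,\alpha}\operatorname{J}(p,m)=\operatorname{J}(\lambda\star q,n)\operatorname{R}_{q,n,\beta;p,m,\alpha}$, i.e. $\operatorname{R}_{q,n,\beta;p,m,\alpha}\in\mathrm{C}_\F(\operatorname{J}(p,m),\operatorname{J}(\lambda\star q,n))$. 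Proposition \ref{CJpnJqmProp} evaluates the latter to $\delta_{p,\lambda\star q}\mathrm{C}_\Fp(\operatorname{N}_m,\operatorname{N}_n)$, where $\Fx{\lambda\star q}=\Fp$ whenever $p=\lambda\star q$.

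The genuinely non-formal point, and where I expect the only real care to be needed, is the infinite-dimensional bookkeeping: making precise what a block family of an operator between two algebraic direct sums is, and identifying which families assemble to a well-defined operator. Since the codomain is an algebraic direct sum, the image of each domain basis vector has finite support, which is exactly the requirement that every column of $(\operatorname{R}_{q,n,\beta;p,m,\alpha})$ contain only finitely many nonzero blocks; conversely any such column-finite family with blocks in the prescribed spaces defines a genuine intertwiner. I would emphasize that the block-wise comparison above involves no convergence issue, because block-diagonality of $\operatorname{J}(\aleph)$ and $\operatorname{J}(\lambda\star\aleph)$ makes each matrix entry a single finite product; everything else is formal manipulation of the earlier lemmas.
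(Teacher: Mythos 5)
Your proposal is correct and follows essentially the same route as the paper's own proof: the same similarity transport via $\operatorname{S}$, the same blockwise application of Lemma \ref{lambdaJLemma} to build the invertible factor $\operatorname{V}(\lambda;\aleph)$, and the same reduction of the block equation to Proposition \ref{CJpnJqmProp}. The only (welcome) additions are that you make explicit the invertibility of $\operatorname{V}(\lambda;\aleph)$ behind the factorization and the column-finiteness bookkeeping, both of which the paper leaves implicit.
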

\begin{proof} The original equation $\operatorname{X}\operatorname{T}=\lambda\operatorname{T}\operatorname{X}$ with the substitution $\operatorname{T}=\operatorname{S}^{-1}\operatorname{J}(\aleph)\operatorname{S}$ gives $\operatorname{X}=\operatorname{S}^{-1}\Delta\operatorname{S}$ where the operator $\Delta$ satisfies $\Delta\operatorname{J}(\aleph)=\lambda\operatorname{J}(\aleph)\Delta$. A blockwise application of Lemma \ref{lambdaJLemma} gives $\lambda\operatorname{J}(\aleph)\operatorname{V}(\lambda;\aleph)=\operatorname{V}(\lambda;\aleph)\operatorname{J}(\lambda\star\aleph)$ with $\operatorname{V}(\lambda;\aleph)$ as in the statement. This implies that $\Delta=\operatorname{V}(\lambda;\aleph)\operatorname{R}$ where the operator $\operatorname{R}$ satisfies $\operatorname{R}\operatorname{J}(\aleph)=\operatorname{J}(\lambda\star\aleph)\operatorname{R}$. If we break down the operator $\operatorname{R}$ into blocks $\operatorname{R}_{q,n,\beta;p,m,\alpha}$ then the latter equation reduces to
$$
\operatorname{R}_{q,n,\beta;p,m,\alpha}\operatorname{J}(p,m)=\operatorname{J}(\lambda\star q,n)\operatorname{R}_{q,n,\beta;p,m,\alpha},
$$
or
$$
\operatorname{R}_{q,n,\beta;p,m,\alpha}\in\mathrm{C}_\F(\operatorname{J}(p,m),\operatorname{J}(\lambda\star q,n))=\delta_{p,\lambda\star q}\mathrm{C}_\Fp(\operatorname{N}_m,\operatorname{N}_n),
$$
were the second step follows from Proposition \ref{CJpnJqmProp}. $\Box$
\end{proof}
Next we introduce for convenience the following sequence of matrices,
$$
\operatorname{U}_n\doteq\frac{d}{d\lambda}\left[\lambda^n\operatorname{V}_n(\lambda^{-1})\right](1)=\begin{pmatrix}
n-1 & 0 & \ldots & 0\\
0 & n-2 & \ldots & 0\\
\ldots & \ldots & \ldots & \ldots\\
0 & 0 & 0 & 0
\end{pmatrix},\quad\forall n\in\mathbb{N}.
$$
These matrices are good for the following property.
\begin{lemma}\label{U_nN_nCommLemma} For all $n\in\mathbb{N}$ we have
$$
\operatorname{U}_n\operatorname{N}_n-\operatorname{N}_n\operatorname{U}_n=\operatorname{N}_n.
$$
\end{lemma}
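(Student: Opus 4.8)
The plan is to reduce the identity to a direct diagonal-versus-superdiagonal computation, since $\operatorname{U}_n$ is diagonal and $\operatorname{N}_n$ is supported on the superdiagonal. First I would confirm the explicit form of $\operatorname{U}_n$ already recorded in its definition: because $\operatorname{V}_n(\lambda^{-1})=\diag(\lambda^{-1},\ldots,\lambda^{-n})$, the product $\lambda^n\operatorname{V}_n(\lambda^{-1})$ equals $\diag(\lambda^{n-1},\lambda^{n-2},\ldots,\lambda,1)$, whose $\lambda$-derivative evaluated at $\lambda=1$ is the diagonal matrix with $(k,k)$ entry $n-k$ for $1\le k\le n$. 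Thus $\operatorname{U}_n=\diag(d_1,\ldots,d_n)$ with $d_k=n-k$.

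Next I would exploit the elementary behaviour of a diagonal matrix under commutation with an arbitrary matrix. For $D=\diag(d_1,\ldots,d_n)$ one has $(D\operatorname{N}_n)_{ij}=d_i(\operatorname{N}_n)_{ij}$ and $(\operatorname{N}_n D)_{ij}=(\operatorname{N}_n)_{ij}d_j$, so that the commutator has entries $[D,\operatorname{N}_n]_{ij}=(d_i-d_j)(\operatorname{N}_n)_{ij}$. Since $\operatorname{N}_n$ vanishes off the superdiagonal, where $(\operatorname{N}_n)_{k,k+1}=1$ for $1\le k\le n-1$, the only possibly nonzero entries of $[\operatorname{U}_n,\operatorname{N}_n]$ sit at positions $(k,k+1)$ and equal $d_k-d_{k+1}$.

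Finally I would substitute $d_k=n-k$ to obtain $d_k-d_{k+1}=(n-k)-(n-k-1)=1$ for every $k=1,\ldots,n-1$, so that $[\operatorname{U}_n,\operatorname{N}_n]$ carries $1$'s precisely on the superdiagonal and zeros elsewhere, which is exactly $\operatorname{N}_n$. This yields $\operatorname{U}_n\operatorname{N}_n-\operatorname{N}_n\operatorname{U}_n=\operatorname{N}_n$. There is no genuine obstacle: the computation is forced once the two matrices are written out, and the only point deserving a moment's care is the last diagonal slot $k=n$, where $d_n=0$ and there is no superdiagonal entry to its right, so it contributes nothing, consistently with $\operatorname{N}_n$ having a zero bottom row.
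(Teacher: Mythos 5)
Your proof is correct and follows the same route as the paper, which simply asserts that the identity holds ``by direct computation of matrix products''; you have just carried out that computation explicitly via the standard fact that $[\operatorname{diag}(d_1,\ldots,d_n),\operatorname{N}_n]_{k,k+1}=(d_k-d_{k+1})(\operatorname{N}_n)_{k,k+1}$ and the observation $d_k-d_{k+1}=1$ for $d_k=n-k$. Nothing is missing.
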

\begin{proof}
The proof is by direct computation of matrix products. $\Box$
\end{proof}

Let us proceed to the equation $\operatorname{Y}\operatorname{T}-\operatorname{T}\operatorname{Y}=\operatorname{T}$ for a Jordanable operator $\operatorname{T}$. The monic irreducible polynomial $p(X)=X$ will be denoted simply by $X\in\sigma_\F$.
\begin{proposition}\label{InhomCommProp} For a nonzero Jordanable operator $\operatorname{T}=\operatorname{S}^{-1}\operatorname{J}(\aleph)\operatorname{S}$ the equation $\operatorname{Y}\operatorname{T}-\operatorname{T}\operatorname{Y}=\operatorname{T}$ has solutions $\operatorname{Y}$ if and only if $\supp\aleph=\{X\}$, in which case the set of all solutions is
$$
\operatorname{S}^{-1}\bigl(\operatorname{U}(\aleph)+\mathrm{C}_\F(\operatorname{J}(\aleph))\bigr)\operatorname{S},
$$
$$
\operatorname{U}(\aleph)\doteq\bigoplus_{n=1}^\infty\bigoplus_{\aleph(X,n)}\operatorname{U}_n
$$
where $\mathrm{C}_\F(\operatorname{J}(\aleph))=\mathrm{C}_\F(\operatorname{J}(\aleph),\operatorname{J}(\aleph))$ can be found using Proposition \ref{lambdaCommProp} with $\lambda=1$.
\end{proposition}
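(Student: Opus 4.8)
The plan is to peel off the similarity $\operatorname{S}$ at the outset: writing $\operatorname{Y}=\operatorname{S}^{-1}\operatorname{Z}\operatorname{S}$ and abbreviating $\operatorname{M}\doteq\operatorname{J}(\aleph)$, the equation $\operatorname{Y}\operatorname{T}-\operatorname{T}\operatorname{Y}=\operatorname{T}$ is equivalent to $\operatorname{Z}\operatorname{M}-\operatorname{M}\operatorname{Z}=\operatorname{M}$. The solution set of this inhomogeneous linear equation, if nonempty, is a coset of the solution set of its homogeneous part $\operatorname{Z}_0\operatorname{M}-\operatorname{M}\operatorname{Z}_0=0$, which is exactly $\operatorname{Z}_0\in\mathrm{C}_\F(\operatorname{J}(\aleph))$; the latter is read off from Proposition \ref{lambdaCommProp} at $\lambda=1$. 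Thus the whole proof reduces to (i) deciding when a particular solution exists and (ii) exhibiting one, after which conjugation of the coset by $\operatorname{S}$ yields the stated answer.

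For sufficiency, suppose $\supp\aleph=\{X\}$. Then $p(X)=X$ has $x_p=0$ and $\Fp=\F$, so $\operatorname{J}(X,n)=\operatorname{N}_n$ and $\operatorname{J}(\aleph)=\bigoplus_{n}\bigoplus_{\aleph(X,n)}\operatorname{N}_n$ is block diagonal with purely nilpotent blocks. Lemma \ref{U_nN_nCommLemma} gives $\operatorname{U}_n\operatorname{N}_n-\operatorname{N}_n\operatorname{U}_n=\operatorname{N}_n$ blockwise, so summing over the same index set shows that $\operatorname{U}(\aleph)=\bigoplus_n\bigoplus_{\aleph(X,n)}\operatorname{U}_n$ satisfies $\operatorname{U}(\aleph)\operatorname{J}(\aleph)-\operatorname{J}(\aleph)\operatorname{U}(\aleph)=\operatorname{J}(\aleph)$. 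This is the required particular solution, and steps (i)--(ii) finish this direction.

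For necessity I would show that if some $p\neq X$ lies in $\supp\aleph$ then no $\operatorname{Z}$ exists. The key is the derivation identity: since $\ad_{\operatorname{Z}}$ is a derivation with $\ad_{\operatorname{Z}}(\operatorname{M})=\operatorname{M}$, induction gives $\ad_{\operatorname{Z}}(\operatorname{M}^k)=k\operatorname{M}^k$, i.e.\ $\operatorname{Z}\operatorname{M}^k-\operatorname{M}^k\operatorname{Z}=k\operatorname{M}^k$ for all $k\ge 1$. A one-line computation with this relation shows that every $\ker(\operatorname{M}^k)$ and every $\operatorname{im}(\operatorname{M}^k)$ is $\operatorname{Z}$-invariant; consequently the \emph{invertible part} $\mathbf{V}_{\neq X}\doteq\bigcap_k\operatorname{im}(\operatorname{M}^k)=\bigoplus_{p\neq X}\mathbf{V}_p$, on which $\operatorname{M}$ is invertible, is $\operatorname{Z}$-invariant (the equality of the two descriptions uses that each vector in a Jordan decomposition has finite support). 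It then remains to rule out a nonzero space carrying an invertible $\operatorname{M}$ with $\operatorname{Z}\operatorname{M}-\operatorname{M}\operatorname{Z}=\operatorname{M}$: restricting to $\mathbf{V}_{\neq X}$ one obtains $\operatorname{M}^{-1}\operatorname{Z}\operatorname{M}=\operatorname{Z}+\id$, so $\operatorname{Z}$ is similar to $\operatorname{Z}+\id$, and equivalently $\tr(\operatorname{M}^k)=0$ for all $k\ge1$, forcing the eigenvalues of $\operatorname{M}$ to vanish and contradicting invertibility. Hence $\mathbf{V}_{\neq X}=0$ and $\supp\aleph=\{X\}$.

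The main obstacle is exactly this last step. The reduction to the invertible part is clean and field-independent, but converting $\operatorname{M}^{-1}\operatorname{Z}\operatorname{M}=\operatorname{Z}+\id$ into a contradiction rests on a spectral argument that is transparent only in finite dimensions and \emph{characteristic zero}, where vanishing of all power sums $\tr(\operatorname{M}^k)$ yields nilpotency of $\operatorname{M}$ through Newton's identities. In infinite dimensions one must first localize to a finite-dimensional $\operatorname{Z}$-invariant subquotient before taking traces, and in small positive characteristic the conclusion genuinely fails (already over $\mathbb{F}_2$ the block $\operatorname{J}(X+1,2)$ admits such a $\operatorname{Z}$), so the precise hypotheses under which the necessity argument runs must be fixed at this point.
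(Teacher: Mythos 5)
Your reduction to $\operatorname{J}(\aleph)$, the coset description of the solution set, and the sufficiency direction via Lemma \ref{U_nN_nCommLemma} are all correct and coincide with the paper's own argument. The gap is in the necessity direction, where you genuinely diverge from the paper. Your reduction to the invertible part $\mathbf{V}_{\neq X}=\bigcap_k\operatorname{im}(\operatorname{M}^k)$ is sound (the invariance of kernels and images under $\operatorname{Z}$ follows correctly from $\operatorname{Z}\operatorname{M}^k=\operatorname{M}^k(\operatorname{Z}+k\id)$), but the contradiction you then extract is purely a trace argument --- trace of a commutator vanishes, power sums of $\operatorname{M}$ vanish, Newton's identities force nilpotency --- and traces simply do not exist on $\mathbf{V}_{\neq X}$, which is infinite dimensional in general. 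You acknowledge this and defer to ``localizing to a finite-dimensional $\operatorname{Z}$-invariant subquotient,'' but you never produce one, and none comes for free: the finite-dimensional $\operatorname{M}$-invariant subspaces (Jordan blocks) need not be $\operatorname{Z}$-invariant, and the subspace generated from a block by $\operatorname{Z}$ and $\operatorname{M}$ can be infinite dimensional (nothing prevents $\operatorname{Z}$ from shifting blocks around). Also, your phrase ``and equivalently $\tr(\operatorname{M}^k)=0$'' conflates two different finite-dimensional arguments (similarity of $\operatorname{Z}$ and $\operatorname{Z}+\id$ versus vanishing power sums); neither is available as stated. So as written, necessity is proved only when $\dim_\F\aleph<\infty$.

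The missing localization is exactly what the paper's blockwise route supplies, and it is cheap: the projection $\pi_b$ onto a single Jordan block commutes with $\operatorname{J}(\aleph)$, so compressing $\operatorname{Z}\operatorname{J}(\aleph)-\operatorname{J}(\aleph)\operatorname{Z}=\operatorname{J}(\aleph)$ to that block gives the same inhomogeneous equation for the diagonal block $\operatorname{Z}_{b;b}=\pi_b\operatorname{Z}|_b$ on the finite-dimensional space $\Fp^m$, namely $\operatorname{Z}_{b;b}\operatorname{J}(p,m)-\operatorname{J}(p,m)\operatorname{Z}_{b;b}=\operatorname{J}(p,m)$. From there your own power-sum argument closes the proof in characteristic zero: $\F$-traces of $\operatorname{Z}_{b;b}\operatorname{J}(p,m)^k-\operatorname{J}(p,m)^k\operatorname{Z}_{b;b}=k\operatorname{J}(p,m)^k$ give $\tr\operatorname{J}(p,m)^k=0$ for all $k$, hence $\operatorname{J}(p,m)$ is nilpotent and $p=X$; this is in fact lighter than the paper's proof, which first establishes $\Fp$-linearity of the diagonal blocks (through Corollary \ref{xpCommFptoFpCor}) in order to take $\Fp$-traces and conclude $mx_p=0$. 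Finally, your positive-characteristic caveat is a genuine and correct observation, not a defect: over $\mathbb{F}_2$ the block $\operatorname{J}(X+1,2)$ does admit such a $\operatorname{Z}$ (e.g.\ $\operatorname{Z}$ with rows $(1,0)$ and $(1,0)$), which shows that the proposition --- and the paper's own step ``$mx_p=0$ whence $x_p=0$'' --- tacitly requires $\operatorname{char}\F=0$, or at least that $\operatorname{char}\F$ divides no block size $m$ occurring in $\aleph$. But flagging that hypothesis does not repair the missing infinite-dimensional localization, which is the actual gap in your argument.
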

\begin{proof} The original equation $\operatorname{Y}\operatorname{T}-\operatorname{T}\operatorname{Y}=\operatorname{T}$ with the substitution $\operatorname{T}=\operatorname{S}^{-1}\operatorname{J}(\aleph)\operatorname{S}$ gives $\operatorname{Y}=\operatorname{S}^{-1}\Delta\operatorname{S}$ where the operator $\Delta$ satisfies $\Delta\operatorname{J}(\aleph)-\operatorname{J}(\aleph)\Delta=\operatorname{J}(\aleph)$. If we decompose the operator $\Delta$ into blocks $\Delta_{q,n,\beta;p,m,\alpha}$ then the latter equation reduces to
$$
\Delta_{q,n,\beta;p,m,\alpha}\operatorname{J}(p,m)-\operatorname{J}(q,n)\Delta_{q,n,\beta;p,m,\alpha}=\delta_{q,p}\delta_{n,m}\delta_{\beta,\alpha}\operatorname{J}(p,m),
$$
$$
\forall p,q\in\sigma_\F,\quad\forall m,n\in\mathbb{N},\quad\forall\alpha\in\aleph(p,m),\quad\forall\beta\in\aleph(q,n).
$$
On the diagonal $q=p$, $n=m$, $\beta=\alpha$ we get
\begin{equation}
\Delta_{p,m,\alpha;p,m,\alpha}\operatorname{J}(p,m)-\operatorname{J}(p,m)\Delta_{p,m,\alpha;p,m,\alpha}=\operatorname{J}(p,m)\in\Hom_\F(\Fp^m,\Fp^m),\label{DJCommJ}
\end{equation}
which is equivalent to
\begin{equation}
[\Delta_{p,m,\alpha;p,m,\alpha},x_p\id]=[(\operatorname{N}_m\otimes\id_{\deg p}),\Delta_{p,m,\alpha;p,m,\alpha}]+\operatorname{J}(p,m).\label{DxpNDJ}
\end{equation}
From (\ref{DJCommJ}) one can show inductively that
$$
\Delta_{p,m,\alpha;p,m,\alpha}\operatorname{J}(p,m)^k-\operatorname{J}(p,m)^k\Delta_{p,m,\alpha;p,m,\alpha}=k\operatorname{J}(p,m)^k,\quad\forall k\in\mathbb{N},
$$
and thus
$$
\Delta_{p,m,\alpha;p,m,\alpha}P(\operatorname{J}(p,m))-P(\operatorname{J}(p,m))\Delta_{p,m,\alpha;p,m,\alpha}=P'(\operatorname{J}(p,m))\operatorname{J}(p,m),\quad\forall P(X)\in\F[X].
$$
Choosing $P(X)=p(X)$ we obtain
$$
\Delta_{p,m,\alpha;p,m,\alpha}(\operatorname{N}_m\otimes\id_{\deg p})-(\operatorname{N}_m\otimes\id_{\deg p})\Delta_{p,m,\alpha;p,m,\alpha}=p'(\operatorname{J}(p,m))\operatorname{J}(p,m),
$$
which combined with (\ref{DxpNDJ}) yields
$$
[\Delta_{p,m,\alpha;p,m,\alpha},x_p\id]=-p'(\operatorname{J}(p,m))\operatorname{J}(p,m)+\operatorname{J}(p,m)\in\Hom_\Fp(\Fp^m,\Fp^m).
$$
Therefore by Corollary \ref{xpCommFptoFpCor} we find that $\Delta_{p,m,\alpha;p,m,\alpha}\in\Hom_\Fp(\Fp^m,\Fp^m)$, i.e., it is $\Fp$-linear. Thus we can view (\ref{DJCommJ}) as an $\Fp$-linear equation and take the $\Fp$-trace of both parts,
$$
\tr_\Fp[\Delta_{p,m,\alpha;p,m,\alpha},\operatorname{J}(p,m)]=0=\tr_\Fp\operatorname{J}(p,m)=mx_p,
$$
whence $x_p=0$ and $p(X)=X$. This proves that if a solution $\operatorname{Y}$ exists then necessarily $\supp\aleph=\{X\}$.

Now assume that $\supp\aleph=\{X\}$. The equation $[\Delta,\operatorname{J}(\aleph)]=\operatorname{J}(\aleph)$ is linear inhomogeneous, and its general solution is of the form
$$
\Delta=\Delta_0+\operatorname{X},
$$
where $\Delta_0$ is a particular solution and $\operatorname{X}$ is an arbitrary solution of the homogeneous equation $[\operatorname{X},\operatorname{J}(\aleph)]=0$, i.e.,
$$
\Delta\in\Delta_0+\mathrm{C}_\F(\operatorname{J}(\aleph)),
$$
as desired. It remains to show that
$$
\Delta_0=\operatorname{U}(\aleph)=\bigoplus_{n=1}^\infty\bigoplus_{\aleph(X,n)}\operatorname{U}_n
$$
is indeed a particular solution,
$$
[\operatorname{U}(\aleph),\operatorname{J}(\aleph)]=\bigoplus_{n=1}^\infty\bigoplus_{\aleph(X,n)}[\operatorname{U}_n,\operatorname{N}_n]=\bigoplus_{n=1}^\infty\bigoplus_{\aleph(X,n)}\operatorname{N}_n=\operatorname{J}(\aleph),
$$
where $\operatorname{J}(X,n)=\operatorname{N}_n$ and Lemma \ref{U_nN_nCommLemma} were used. $\Box$
\end{proof}

Our final task in this section will be to solve the equation $\operatorname{Z}\operatorname{T}+\operatorname{T}^\top\operatorname{Z}=0$ for a Jordanable operator $\operatorname{T}$. Since the transpose of a linear operator does not generally make an invariant sence in infinite dimensions (but only in a basis where the operator has finitely many nonzero entries in every row), we restrict to the case $\operatorname{T}=\operatorname{J}(\aleph)$. To this avail we first introduce the matrices
$$
\operatorname{P}_n=\begin{pmatrix}
0 & \ldots & 0 & 1\\
0 & \ldots & 1 & 0\\
\ldots & \ldots & \ldots & \ldots\\
1 & \ldots & 0 & 0
\end{pmatrix}\in\End_\F(\F^n),\quad\forall n\in\mathbb{N}.
$$
For $n=1$ we have $\operatorname{P}_1=1$.
\begin{lemma}\label{PnNnLemma} For all $n\in\mathbb{N}$ we have
$$
\operatorname{P}_n\operatorname{N}_n=\operatorname{N}_n^\top\operatorname{P}_n.
$$
\end{lemma}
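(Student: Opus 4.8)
The plan is to prove the identity by comparing matrix entries, exactly in the spirit of the direct computations used for Lemmas \ref{xlambdapLemma} and \ref{U_nN_nCommLemma}. I record the entries of the three matrices involved: the shift $\operatorname{N}_n$ has $(\operatorname{N}_n)_{i,j}=\delta_{j,i+1}$, so that its transpose has $(\operatorname{N}_n^\top)_{i,j}=\delta_{i,j+1}$, while the exchange matrix $\operatorname{P}_n$ has $(\operatorname{P}_n)_{i,j}=\delta_{i+j,n+1}$, all indices running from $1$ to $n$.

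First I would expand the left-hand side. Using $(\operatorname{P}_n\operatorname{N}_n)_{i,j}=\sum_{k=1}^n(\operatorname{P}_n)_{i,k}(\operatorname{N}_n)_{k,j}=\sum_{k=1}^n\delta_{i+k,n+1}\delta_{j,k+1}$, the product of Kronecker deltas forces $k=n+1-i$ and $j=k+1$, hence the entry equals $1$ precisely when $i+j=n+2$ and is zero otherwise. Next I would expand the right-hand side symmetrically: $(\operatorname{N}_n^\top\operatorname{P}_n)_{i,j}=\sum_{k=1}^n\delta_{i,k+1}\delta_{k+j,n+1}$ forces $k=i-1$ and $k=n+1-j$, so that this entry also equals $1$ exactly when $i+j=n+2$. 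Since the two matrices agree at every position, the identity $\operatorname{P}_n\operatorname{N}_n=\operatorname{N}_n^\top\operatorname{P}_n$ follows, the degenerate case $n=1$ (where $\operatorname{N}_1=0$ and $\operatorname{P}_1=1$) being trivially true.

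There are no real obstacles here; the single point to watch is that the constraint $i+j=n+2$ automatically keeps all summation indices in the admissible range $1\le k\le n$ on both sides, so that no nonzero entry is spuriously created or lost at the boundary. If one prefers a basis-free formulation, one may instead observe that $\operatorname{P}_n$ is a symmetric involution, $\operatorname{P}_n^2=\id_n$, whose action $e_j\mapsto e_{n+1-j}$ reverses the ordered basis; conjugation by $\operatorname{P}_n$ therefore turns the down-shift $\operatorname{N}_n\colon e_j\mapsto e_{j-1}$ into the up-shift $e_j\mapsto e_{j+1}$, which is exactly $\operatorname{N}_n^\top$. This gives $\operatorname{P}_n\operatorname{N}_n\operatorname{P}_n=\operatorname{N}_n^\top$, and multiplying by $\operatorname{P}_n$ on the right yields the claim.
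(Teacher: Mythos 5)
Your proof is correct and takes essentially the same approach as the paper: the paper's own proof of this lemma is just the one-line statement ``The proof is by direct computation of matrix products,'' and your entrywise Kronecker-delta calculation (both sides having a $1$ exactly where $i+j=n+2$) is precisely that computation carried out in full, with the boundary index issue correctly handled. The closing basis-free observation that conjugation by the involution $\operatorname{P}_n$ exchanges the up- and down-shifts is a nice, equally valid alternative packaging of the same fact.
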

\begin{proof} The proof is by direct computation of matrix products. $\Box$
\end{proof}

For every $p\in\sigma_\F$ let
$$
p(X)=X^d+a_{d-1}X^{d-1}+\ldots+a_0,\quad d=\deg p,
$$
and define $d-1$ numbers $\mu_1,\ldots,\mu_{d-1}\in\F$ recursively by
\begin{equation}
\mu_n=-a_{d-n}-\sum_{k=1}^{n-1}a_{d-n+k}\,\mu_k,\quad n=1,\ldots,d-1.\label{mundef}
\end{equation}
We introduce the symmetric matrix
$$
\operatorname{W}^\epsilon_p=\begin{pmatrix}
0 & 0 & \ldots & 0 & 1\\
0 & 0 & \ldots & 1 & \epsilon\mu_1\\
0 & 0 & \ldots & \epsilon\mu_1 & \mu_2\\
\ldots & \ldots & \ldots & \ldots & \ldots\\
1 & \epsilon\mu_1 & \ldots & \mu_{d-2} & \mu_{d-1}
\end{pmatrix}\in\End_\F(\Fp).
$$
If $\deg p=1$ then $x_p\in\F$ and $\operatorname{W}^\epsilon_p=1$.
\begin{lemma}\label{WpxpLemma} For all $p\in\sigma_f$ we have
$$
\operatorname{W}^\epsilon_px_p=x_p^\top\operatorname{W}^\epsilon_p\in\End_\F(\Fp),
$$
where $\epsilon$ is per (\ref{epsilonDef}).
\end{lemma}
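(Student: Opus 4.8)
The plan is to reduce the intertwining identity to a single symmetry statement and then to recognise the recursion (\ref{mundef}) inside it. Because $\operatorname{W}^\epsilon_p$ is symmetric, one has $(\operatorname{W}^\epsilon_p x_p)^\top = x_p^\top\operatorname{W}^\epsilon_p$, so the asserted equality $\operatorname{W}^\epsilon_p x_p = x_p^\top\operatorname{W}^\epsilon_p$ holds if and only if the product $\operatorname{W}^\epsilon_p x_p$ is itself a symmetric matrix. The case $\deg p = 1$ is trivial, since then $\operatorname{W}^\epsilon_p = 1$ and $x_p\in\F$. I would therefore set $d = \deg p \ge 2$, treat the companion form ($\epsilon = 1$) in the main argument, and dispose of the real $2\times 2$ form ($\epsilon = 0$) separately at the end.

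First I would record the two structural facts that drive the computation. On one hand $\operatorname{W}^\epsilon_p$ is a Hankel matrix: its $(i,j)$ entry $W_{ij}$ depends only on $s = i+j$, and writing $w_s\doteq W_{ij}$ for $i+j=s$ one has $w_s = 0$ for $s\le d$, $w_{d+1} = 1$, and $w_{d+1+k} = \mu_k$ for $1\le k\le d-1$ (for $\epsilon = 1$ the entry $\epsilon\mu_1$ is just $\mu_1$; it is convenient to also set $\mu_0 = 1$, so that $w_{d+1+k} = \mu_k$ uniformly for $0\le k\le d-1$). On the other hand $x_p$ acts as the companion matrix: its $j$-th column is $e_{j+1}$ for $j<d$, while its last column is $-(a_0,\dots,a_{d-1})^\top$. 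Multiplying, the entries of $M\doteq\operatorname{W}^\epsilon_p x_p$ are $M_{ij} = W_{i,j+1} = w_{i+j+1}$ when $j<d$, and $M_{id} = -\sum_{k=1}^d a_{k-1}w_{i+k}$.

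With these formulae the symmetry of $M$ splits into cases. When $i,j<d$ it is automatic, since $M_{ij} = w_{i+j+1} = M_{ji}$ by the Hankel property, and the diagonal entry $M_{dd}$ imposes no condition. The only real content is the comparison of the last column with the last row, i.e. $M_{id} = M_{di}$ for $1\le i\le d-1$, which reads $-\sum_{k=1}^d a_{k-1}w_{i+k} = w_{d+i+1}$. Since $w_{i+k}$ vanishes for $i+k\le d$, only the indices $d+1-i\le k\le d$ survive; putting $j = i+k-d-1$ rewrites the left-hand side as $-a_{d-i} - \sum_{j=1}^{i-1}a_{d-i+j}\mu_j$ and the right-hand side as $\mu_i$. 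This is exactly (\ref{mundef}) with $n=i$, so the symmetry holds precisely because the numbers $\mu_n$ were defined to enforce it. The main obstacle is nothing conceptual but this index bookkeeping: correctly identifying the surviving range of $k$, performing the shift $j = i+k-d-1$, and isolating the $\mu_0 = 1$ boundary term so that the convolution matches the recursion term by term.

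It remains to handle the real form $\epsilon = 0$, which by (\ref{xpMatrixFormR}) occurs only for $d=2$, where $x_p$ is the rotation-type matrix rather than a companion matrix and $\operatorname{W}^0_p = \operatorname{P}_2$. Here I would simply multiply out the two $2\times 2$ products and check that $\operatorname{W}^0_p x_p$ and $x_p^\top\operatorname{W}^0_p$ coincide, using $p(X) = (X-a)^2 + b^2$; this is a one-line verification that closes the remaining case. $\Box$
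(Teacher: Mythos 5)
Your proof is correct and is essentially the paper's own argument: a direct entrywise computation of the product of $\operatorname{W}^\epsilon_p$ with the companion matrix, in which the recursion (\ref{mundef}) is precisely what makes the entries along the last column come out right, together with a separate $2\times 2$ verification for the case $\epsilon=0$. Your preliminary reduction --- since $\operatorname{W}^\epsilon_p$ is symmetric, the asserted identity is equivalent to the symmetry of the single product $\operatorname{W}^\epsilon_p x_p$ --- is a tidy reorganization that halves the computation (the paper computes both $\operatorname{W}^1_p x_p$ and $x_p^\top\operatorname{W}^1_p$ in full), but the substance of the two arguments is the same.
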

\begin{proof}The proof is by direct computation of matrix products. If $\epsilon=1$ then
$$
\operatorname{W}^1_px_p=\begin{pmatrix}
0 & 0 & \ldots & 0 & 0 & 1\\
0 & 0 & \ldots & 0 & 1 & \mu_1\\
0 & 0 & \ldots & 1 & \mu_1 & \mu_2\\
\ldots & \ldots & \ldots & \ldots & \ldots & \ldots\\
1 & \mu_1 & \ldots & \mu_{d-3} & \mu_{d-2} & \mu_{d-1}
\end{pmatrix}\begin{pmatrix}
0 & 0 & \ldots & 0 & 0 & -a_0\\
1 & 0 & \ldots & 0 & 0 & -a_1\\
0 & 1 & \ldots & 0 & 0 & -a_2\\
\ldots & \ldots & \ldots & \ldots & \ldots & \ldots\\
0 & 0 & \ldots & 0 & 1 & -a_{d-1}
\end{pmatrix}
$$
$$
=\begin{pmatrix}
0 & 0 & \ldots & 0 & 1 & -a_{d-1}\\
0 & 0 & \ldots & 1 & \mu_1 & -a_{d-2}-\mu_1a_{d-1}\\
0 & 0 & \ldots & \mu_1 & \mu_2 & -a_{d-3}-\mu_1a_{d-2}-\mu_2a_{d-1}\\
\ldots & \ldots & \ldots & \ldots & \ldots & \ldots\\
\mu_1 & \mu_2 & \ldots & \mu_{d-2} & \mu_{d-1} & -a_0-\mu_1a_1-\ldots-\mu_{d-1}a_{d-1}
\end{pmatrix}
$$
$$
=\begin{pmatrix}
0 & 0 & \ldots & 0 & 1 & \mu_1\\
0 & 0 & \ldots & 1 & \mu_1 & \mu_2\\
0 & 0 & \ldots & \mu_1 & \mu_2 & \mu_3\\
\ldots & \ldots & \ldots & \ldots & \ldots & \ldots\\
\mu_1 & \mu_2 & \ldots & \mu_{d-2} & \mu_{d-1} & -a_0-\mu_1a_1-\ldots-\mu_{d-1}a_{d-1}
\end{pmatrix}
$$
$$
=\begin{pmatrix}
0 & 1 & 0 & \ldots & 0 & 0\\
0 & 0 & 1 & \ldots & 0 & 0\\
0 & 0 & 0 & \ldots & 0 & 0\\
\ldots & \ldots & \ldots & \ldots & \ldots & \ldots\\
-a_0 & -a_1 & -a_2 & \ldots & -a_{d-2} & -a_{d-1}
\end{pmatrix}\begin{pmatrix}
0 & 0 & \ldots & 0 & 0 & 1\\
0 & 0 & \ldots & 0 & 1 & \mu_1\\
0 & 0 & \ldots & 1 & \mu_1 & \mu_2\\
\ldots & \ldots & \ldots & \ldots & \ldots & \ldots\\
1 & \mu_1 & \ldots & \mu_{d-3} & \mu_{d-2} & \mu_{d-1}
\end{pmatrix}=x_p^\top\operatorname{W}^1_p,
$$
where the defintion (\ref{mundef}) of the numbers $\mu_n$ was used in the third equality. If on the other hand $\epsilon=0$ then
$$
\operatorname{W}^0_px_p=\begin{pmatrix}
0 & 1\\
1 & 0
\end{pmatrix}\begin{pmatrix}
a & -b\\
b & a
\end{pmatrix}=\begin{pmatrix}
b & a\\
a & -b
\end{pmatrix}=\begin{pmatrix}
a & b\\
-b & a
\end{pmatrix}\begin{pmatrix}
0 & 1\\
1 & 0
\end{pmatrix}=x_p^\top\operatorname{W}^0_p,
$$ 
as desired. $\Box$
\end{proof}
\begin{proposition}\label{TTtransCommProp} For a nonzero multiplicity function $\aleph$, the solutions of the\newline equation $\operatorname{Z}\operatorname{J}(\aleph)+\operatorname{J}(\aleph)^\top\operatorname{Z}=0$ are
$$
\operatorname{Z}=\operatorname{W}(\aleph)\operatorname{A},
$$
where
$$
\operatorname{W}(\aleph)=\bigoplus_{p\in\sigma_\F}\bigoplus_{n=1}^\infty\bigoplus_{\aleph(p,n)}\operatorname{P}_n\otimes\operatorname{W}^\epsilon_p,
$$
$\epsilon$ is per (\ref{epsilonDef}) and $\operatorname{A}\in\mathrm{C}_\F(\operatorname{J}(\aleph),-\operatorname{J}(\aleph))$ as in Proposition \ref{lambdaCommProp} with $\lambda=-1$.
\end{proposition}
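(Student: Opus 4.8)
The plan is to use $\operatorname{W}(\aleph)$ as an explicit intertwiner between $\operatorname{J}(\aleph)$ and its transpose, thereby converting the transpose equation $\operatorname{Z}\operatorname{J}(\aleph)+\operatorname{J}(\aleph)^\top\operatorname{Z}=0$ into the already-solved commutation equation $\operatorname{A}\operatorname{J}(\aleph)=-\operatorname{J}(\aleph)\operatorname{A}$ treated in Proposition \ref{lambdaCommProp} with $\lambda=-1$.

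First I would establish the central identity $\operatorname{W}(\aleph)\operatorname{J}(\aleph)=\operatorname{J}(\aleph)^\top\operatorname{W}(\aleph)$ one Jordan block at a time. Writing $\operatorname{J}(p,n)=\id_n\otimes x_p+\operatorname{N}_n\otimes\id_{\deg p}$ and using the mixed-product rule for the block tensor product (\ref{MatrixTensorProd}) together with Lemma \ref{WpxpLemma} ($\operatorname{W}^\epsilon_p x_p=x_p^\top\operatorname{W}^\epsilon_p$) and Lemma \ref{PnNnLemma} ($\operatorname{P}_n\operatorname{N}_n=\operatorname{N}_n^\top\operatorname{P}_n$), I compute
\[
(\operatorname{P}_n\otimes\operatorname{W}^\epsilon_p)\operatorname{J}(p,n)=\operatorname{P}_n\otimes(x_p^\top\operatorname{W}^\epsilon_p)+(\operatorname{N}_n^\top\operatorname{P}_n)\otimes\operatorname{W}^\epsilon_p=(\id_n\otimes x_p^\top+\operatorname{N}_n^\top\otimes\id_{\deg p})(\operatorname{P}_n\otimes\operatorname{W}^\epsilon_p).
\]
Since $(\operatorname{A}\otimes\operatorname{B})^\top=\operatorname{A}^\top\otimes\operatorname{B}^\top$ for the block product, the left factor on the right-hand side is exactly $\operatorname{J}(p,n)^\top$, so $(\operatorname{P}_n\otimes\operatorname{W}^\epsilon_p)\operatorname{J}(p,n)=\operatorname{J}(p,n)^\top(\operatorname{P}_n\otimes\operatorname{W}^\epsilon_p)$. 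Taking the direct sum over all blocks of $\aleph$ yields the identity for $\operatorname{W}(\aleph)$.

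Next I would note that $\operatorname{W}(\aleph)$ is invertible: it is block diagonal, each block $\operatorname{P}_n\otimes\operatorname{W}^\epsilon_p$ being the block tensor product of a reversal permutation matrix and an anti-triangular matrix with units on the anti-diagonal, hence of determinant $\pm1$, and its inverse is the direct sum of the block inverses. With this in hand, substituting $\operatorname{Z}=\operatorname{W}(\aleph)\operatorname{A}$ and applying the intertwining identity gives
\[
\operatorname{Z}\operatorname{J}(\aleph)+\operatorname{J}(\aleph)^\top\operatorname{Z}=\operatorname{W}(\aleph)\operatorname{A}\operatorname{J}(\aleph)+\operatorname{W}(\aleph)\operatorname{J}(\aleph)\operatorname{A}=\operatorname{W}(\aleph)\bigl(\operatorname{A}\operatorname{J}(\aleph)+\operatorname{J}(\aleph)\operatorname{A}\bigr),
\]
so $\operatorname{Z}$ solves the transpose equation if and only if $\operatorname{A}=\operatorname{W}(\aleph)^{-1}\operatorname{Z}$ satisfies $\operatorname{A}\operatorname{J}(\aleph)=-\operatorname{J}(\aleph)\operatorname{A}$, i.e.\ $\operatorname{A}\in\mathrm{C}_\F(\operatorname{J}(\aleph),-\operatorname{J}(\aleph))$. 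As $\operatorname{W}(\aleph)$ is invertible, the assignment $\operatorname{A}\mapsto\operatorname{W}(\aleph)\operatorname{A}$ is a bijection between the two solution sets, and $\mathrm{C}_\F(\operatorname{J}(\aleph),-\operatorname{J}(\aleph))$ is precisely what Proposition \ref{lambdaCommProp} describes at $\lambda=-1$, which completes the description.

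The steps requiring care are bookkeeping rather than genuine obstacles: verifying the block-tensor identity above (a routine consequence of the two preceding lemmas) and checking that all the infinite matrix products are legitimate. For the latter I would observe that $\operatorname{W}(\aleph)$ and $\operatorname{W}(\aleph)^{-1}$ are block diagonal with finite blocks, so left-multiplication by them preserves the column-finiteness that Proposition \ref{lambdaCommProp} guarantees for $\operatorname{A}$; moreover $\operatorname{J}(\aleph)$ is column-finite and $\operatorname{J}(\aleph)^\top$ is row-finite in the Jordan basis, which makes both products in the equation well defined and the substitution argument valid.
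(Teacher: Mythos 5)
Your proposal is correct and takes essentially the same route as the paper's own proof: the same blockwise identity $(\operatorname{P}_n\otimes\operatorname{W}^\epsilon_p)\operatorname{J}(p,n)=\operatorname{J}(p,n)^\top(\operatorname{P}_n\otimes\operatorname{W}^\epsilon_p)$ derived from Lemmas \ref{PnNnLemma} and \ref{WpxpLemma}, followed by invertibility of $\operatorname{W}(\aleph)$ and the substitution $\operatorname{Z}=\operatorname{W}(\aleph)\operatorname{A}$ reducing the problem to $\mathrm{C}_\F(\operatorname{J}(\aleph),-\operatorname{J}(\aleph))$ as in Proposition \ref{lambdaCommProp} with $\lambda=-1$. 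Your added remarks on the determinant of the blocks and on column-finiteness of the infinite matrix products are details the paper leaves implicit, but they do not change the argument.
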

\begin{proof} Using Lemma \ref{PnNnLemma} and Lemma \ref{WpxpLemma} we establish for every $n\in\mathbb{N}$ and $p\in\sigma_\F$ that
$$
(\operatorname{P}_n\otimes\operatorname{W}^\epsilon_p)\operatorname{J}(p,n)=(\operatorname{P}_n\otimes\operatorname{W}^\epsilon_p)(\id_n\otimes x_p+\operatorname{N}_n\otimes\id_{\deg p})=\operatorname{P}_n\otimes(\operatorname{W}^\epsilon_px_p)+(\operatorname{P}_n\operatorname{N}_n)\otimes\operatorname{W}^\epsilon_p
$$
$$
=\operatorname{P}_n\otimes(x_p^\top\operatorname{W}^\epsilon_p)+(\operatorname{N}_n^\top\operatorname{P}_n)\otimes\operatorname{W}^\epsilon_p=(\id_n\otimes x_p^\top+\operatorname{N}_n^\top\otimes\id_{\deg p})(\operatorname{P}_n\otimes\operatorname{W}^\epsilon_p)=\operatorname{J}(p,n)^\top(\operatorname{P}_n\otimes\operatorname{W}^\epsilon_p),
$$
which implies $\operatorname{W}(\aleph)\operatorname{J}(\aleph)=\operatorname{J}(\aleph)^\top\operatorname{W}(\aleph)$ with operator $\operatorname{W}(\aleph)$ as in the statement. The operator $\operatorname{W}(\aleph)$ is invertible, and we are free to make a substitution $\operatorname{Z}=\operatorname{W}(\aleph)\operatorname{A}$, which yields $\operatorname{A}\operatorname{J}(\aleph)+\operatorname{J}(\aleph)\operatorname{A}=0$, i.e., $\operatorname{A}\in\mathrm{C}_\F(\operatorname{J}(\aleph),-\operatorname{J}(\aleph))$ as desired. $\Box$
\end{proof}





\section{Jordanable almost Abelian Lie algebras}\label{JordAALieAlg}

We finally come to the main subject of this paper, which is the study of Jordanable almost Abelian Lie algebras. An almost Abelian Lie algebra is a non-Abelian Lie algebra over a field $\F$ which contains a codimension 1 Abelian subalgebra. We refer the reader to \cite{Avetisyan2016} for all relevant definitions and facts about this class of Lie algebras that are going to be used here. An almost Abelian Lie algebra can be written as the semidirect product $\mathbf{L}=\F e_0\rtimes\mathbf{V}$ of the 1-dimensional Abelian Lie algebra $\F e_0$ with a $(\dim_\F\mathbf{L}-1)$-dimensional Abelian Lie algebra $\mathbf{V}$. The Lie algebra structure is completely determined by the nonzero operator $\ad_{e_0}\in\End_\F(\mathbf{V})$ defined by
$$
\ad_{e_0}v=[e_0,v],\quad\forall v\in\mathbf{V}.
$$
In Proposition 11 of \cite{Avetisyan2016} it was shown that isomorphism classes of almost Abelian Lie algebras are in a bijective correspondence with projective similarity classes $[\F^*\ad_{e_0}]$. Here we want to consider a subclass of almost Abelian Lie algebras which allows for a more explicit description of structure than in \cite{Avetisyan2016}. In particular, several results in that paper are formulated in terms of solutions to operator (matrix) equations, which are hard to solve explicitly in general. In order to achieve explicit formulae we assume that the operators (matrices) in question are Jordanable, which makes these matrix equations tractable algebraically.
\begin{definition} We will say that an almost Abelian Lie algebra $\mathbf{L}=\F e_0\rtimes\mathbf{V}$ is Jordanable if the operator $\ad_{e_0}$ over $\mathbf{V}$ is Jordanable.
\end{definition}
\begin{remark} In particular, every finite dimensional almost Abelian Lie algebra is Jordanable.
\end{remark}
If $\mathbf{L}=\F e_0\rtimes\mathbf{V}$ is a Jordanable almost Abelian Lie algebra then by Proposition 11 of \cite{Avetisyan2016} and Corollary \ref{ProjMultProjSimCor} of the last section we see that the composite map
\begin{equation}
[\mathbf{L}]\mapsto[\F^*\ad_{e_0}]=[\F^*\mbox{\normalfont J}](\aleph_{\ad_{e_0}})\mapsto\F^*\star\aleph_{\ad_{e_0}}\label{LtoProjMult}
\end{equation}
from isomorphism classes of Jordanable almost Abelian Lie algebras to projective multiplicity functions is a bijection. Denote by
$$
\aA_\F(\aleph)=\aA(\aleph)\doteq\F e_0\rtimes\mathbf{V},\quad\ad_{e_0}=\operatorname{J}(\aleph),\quad\mathbf{V}=\F^{\dim_\F\aleph}
$$
the Jordanable almost Abelian Lie algebra associated to the multiplicity function $\aleph$, and by $\aA(\F^*\star\aleph)=[\aA(\aleph)]$ its isomorphism class. Then the map $\F^*\star\aleph\mapsto\aA(\F^*\star\aleph)$ is the inverse of the map in (\ref{LtoProjMult}). In the sequel we will be interested only in almost Abelian Lie algebras up to isomorphism, and from every isomorphism class $\aA(\F^*\star\aleph)$ we will always choose the convenient representative $\aA(\aleph)$ defined above. Every Jordanable almost Abelian Lie algebra $\mathbf{L}=\F e_0\rtimes\mathbf{V}$ can be brought to this canonical form by choosing a suitable basis in $\mathbf{V}$ and rescaling $e_0$.

Let us now fix a non-zero multiplicity function $\aleph$ (remember that almost Abelian Lie algebras are assumed to be non-Abelian) and consider the associated canonical almost Abelian Lie algebra $\mathbf{L}=\aA(\aleph)=\F e_0\rtimes\mathbf{V}$. Let us for every $p\in\sigma_\F$, $n\in\mathbb{N}$ and $\alpha\in\aleph(p,n)$ choose standard $\Fp$-basis $\{e_\alpha^m(p,n)\}$, $m=1,...,n$ of the block space $\Fp^n$ so that
\begin{equation}
\mathbf{V}=\bigoplus_{p\in\sigma_\F}\bigoplus_{n=1}^\infty\bigoplus_{\alpha\in\aleph(p,n)}\bigoplus_{m=1}^n\Fp e_\alpha^m(p,n),\quad\operatorname{N}_ne_\alpha^{m}(p,n)=\begin{cases}
e_\alpha^{m-1}(p,n),\quad m>1,\\
0\quad m=1
\end{cases}.\label{AdaptBasis}
\end{equation}
According to Remark 2 from \cite{Avetisyan2016} we have for the centre $\mathcal{Z}(\mathbf{L})=\ker\ad_{e_0}$ and for the lower central ceries $\mathbf{L}_{(k)}=[\mathbf{L},...,[\mathbf{L},\mathbf{L}]...]=\ad_{e_0}^k\mathbf{V}$ for all $k\in\mathbb{N}$. Now $\ad_{e_0}=\operatorname{J}(\aleph)$ allows us to explicitly write
$$
\mathcal{Z}(\mathbf{L})=\bigoplus_{p\in\sigma_\F}\bigoplus_{n=1}^\infty\bigoplus_{\aleph(p,n)}\ker\operatorname{J}(p,n)=\bigoplus_{n=1}^\infty\bigoplus_{\aleph(X,n)}\ker\operatorname{N}_n,
$$
where we write $X$ for the polynomial $p(X)=X$ with $x_p=0$, and $\operatorname{J}(X,n)=\operatorname{N}_n$ by formula (\ref{JpnDef}). In the standard basis this reads
\begin{equation}
\mathcal{Z}(\mathbf{L})=\bigoplus_{n=1}^\infty\bigoplus_{\alpha\in\aleph(X,n)}\F e_\alpha^1(X,n)\label{ZLBasis}
\end{equation}
(we used the fact that $\Fp=\F$ for $p(X)=X$). Meanwhile
$$
\mathbf{L}_{(k)}=\bigoplus_{p\in\sigma_\F}\bigoplus_{n=1}^\infty\bigoplus_{\aleph(p,n)}\operatorname{J}(p,n)^k\Fp^n=\left[\bigoplus_{p\in\sigma_\F\setminus\{X\}}\bigoplus_{n=1}^\infty\bigoplus_{\aleph(p,n)}\Fp^n\right]\bigoplus\left[\bigoplus_{n=1}^\infty\bigoplus_{\aleph(X,n)}\operatorname{N}_n^k\F^n\right],
$$
which in the standard basis becomes
\begin{equation}
\mathbf{L}_{(k)}=\left[\bigoplus_{p\in\sigma_\F\setminus\{X\}}\bigoplus_{n=1}^\infty\bigoplus_{\aleph(p,n)}\bigoplus_{m=1}^n\Fp e_\alpha^m(p,n)\right]\bigoplus\left[\bigoplus_{n=1}^\infty\bigoplus_{\alpha\in\aleph(X,n)}\bigoplus_{m=1}^{n-k}\F e_\alpha^m(X,n)\right].\label{LkBasis}
\end{equation}
In particular, we arrive at the following.
\begin{remark}\label{NilpoRem} $\aA(\aleph)$ is nilpotent if and only if $\supp\aleph=\{X\}$.
\end{remark}
Let us turn now to the decomposition $\mathbf{L}=\mathbf{L}_0\oplus\mathbf{W}$ from Proposition 5 of \cite{Avetisyan2016}. Here only blocks with $p(X)=X$ and $n=1$ enter the direct summand $\mathbf{W}$,
\begin{equation}
\mathbf{W}=\bigoplus_{\aleph(X,1)}\F=\bigoplus_{\alpha\in\aleph(X,1)}\F e_\alpha^1(X,1),\label{LDecompBasis}
\end{equation}
and the rest falls into $\mathbf{L}_0$.

Another important structural property of a Lie algebra is the family of its Lie subalgebras and ideals. Proposition 4 in \cite{Avetisyan2016} describes these for a given almost Abelian Lie algebra in terms of the kernel and the image of $\ad_{e_0}$ and subspaces of $\mathbf{V}$ invariant under $\ad_{e_0}$. For a Jordanable almost Abelian Lie algebra an explicit description is available for all three constituents. The kernel and the image of $\ad_{e_0}$ were described above, and $\ad_{e_0}$-invariant subspaces were dealt with in Corollary \ref{InvSubspCorr}. One important fact implied by Proposition \ref{JordOpRestrProp} is the following.
\begin{remark} Every almost Abelian Lie subalgebra of a Jordanable almost Abelian Lie algebra is Jordanable.
\end{remark}

Next we will describe the automorphism group of a Jordanable almost Abelian Lie algebra other than the Heisenberg algebra with all details spelled out (for the Heisenberg algebra the answer is well known and can be found, for instance, in \cite{Avetisyan2016}). The following is an easy corollary of Lemma \ref{NCommutantLem}, Proposition \ref{lambdaCommProp} and Proposition 10 in \cite{Avetisyan2016}.
\begin{proposition}\label{AutaA} The automorphism group of the indecomposable Jordanable almost Abelian Lie algebra $\aA(\aleph)=\F e_0\rtimes\mathbf{V}$ with $\ad_{e_0}=\operatorname{J}(\aleph)$ other than $\mathbf{H}_\F$ is
$$
\Aut\left(\aA(\aleph)\right)=\left\{\begin{pmatrix}
\nu & 0\\
\gamma & \Delta
\end{pmatrix}\,\vline\quad\nu\in\Dil(\aleph),\quad\gamma\in\mathbf{V},\quad\Delta\in\Aut(\F^{\dim_\F\aleph})\right\},
$$
$$
\Delta_{q,n,\beta;p,m,\alpha}=\delta_{p,\nu\star q}\left(\operatorname{V}_n(\nu^{-1})\otimes\operatorname{V}_d(\nu|\nu|^{\epsilon-1})\right)\operatorname{R}_{p;n,\beta;m,\alpha}\in\Hom_\F(\Fp^m,\Fp^n),
$$

$$
\left(\operatorname{V}_n(\nu^{-1})\otimes\operatorname{V}_d(\nu|\nu|^{\epsilon-1})\right)=
$$

$$
\begin{pmatrix}
\begin{matrix}
1 & 0 & \ldots & 0\\
0 & \nu & \ldots & 0\\
\ldots & \ldots & \ldots & \ldots\\
0 & 0 & \ldots & \nu^{d-1}
\end{matrix} & 0 & \ldots & 0\\
0 & \begin{matrix}
\nu^{-1} & 0 & \ldots & 0\\
0 & 1 & \ldots & 0\\
\ldots & \ldots & \ldots & \ldots\\
0 & 0 & \ldots & \nu^{d-2}
\end{matrix} & \ldots & 0\\
\ldots & \ldots & \ldots & \ldots\\
0 & 0 & \ldots & \begin{matrix}
\nu^{1-n} & 0 & \ldots & 0\\
0 & \nu^{2-n} & \ldots & 0\\
\ldots & \ldots & \ldots & \ldots\\
0 & 0 & \ldots & \nu^{d-n}
\end{matrix}
\end{pmatrix}
$$

for $\epsilon=1$ and 

$$
\left(\operatorname{V}_n(\nu^{-1})\otimes\operatorname{V}_d(\nu|\nu|^{\epsilon-1})\right)=\begin{pmatrix}
\begin{matrix}
|\nu|^{-1} & 0\\
0 & \nu|\nu|^{-2}
\end{matrix} & 0 & \ldots & 0\\
0 & \begin{matrix}
\nu^{-1}|\nu|^{-1} & 0\\
0 & |\nu|^{-2}
\end{matrix} & \ldots & 0\\
\ldots & \ldots & \ldots & \ldots\\
0 & 0 & \ldots & \begin{matrix}
\nu^{1-n}|\nu|^{-1} & 0\\
0 & \nu^{2-n}|\nu|^{-2}\end{matrix}
\end{pmatrix}
$$

for $\epsilon=0$,

$$
\operatorname{R}_{p;n,\beta;m,\alpha}=\begin{pmatrix}
0 & \ldots & 0 & r_1 & r_2 & \ldots & r_l\\
0 & \ldots & 0 & 0 & r_1 & \ldots & r_{l-1}\\
\ldots & \ldots & \ldots & \ldots & \ldots & \ldots & \ldots\\
0 & \ldots & 0 & 0 & 0 & \ldots & r_1
\end{pmatrix}\in\End_\Fp(\Fp^m,\Fp^n)\quad\mbox{if}\quad m\ge n,
$$

$$
\operatorname{R}_{p;n,\beta;m,\alpha}=\begin{pmatrix}
r_1 & r_2 & \ldots & r_l\\
0 & r_1 & \ldots & r_{l-1}\\
\ldots & \ldots & \ldots & \ldots\\
0 & 0 & \ldots & r_1\\
0 & 0 & \ldots & 0\\
\ldots & \ldots & \ldots & \ldots\\
0 & 0 & \ldots & 0
\end{pmatrix}\in\End_\Fp(\Fp^m,\Fp^n)\quad\mbox{if}\quad m<n,
$$

$$
l=\min\{m,n\},\quad d=\deg p,\quad r_1,\ldots,r_l\in\Fp,
$$
and $\epsilon$ is per (\ref{epsilonDef}). For every $p,m,\alpha$ only finitely many $q,n,\beta$ correspond to nonzero blocks.
\end{proposition}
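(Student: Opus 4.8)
The plan is to specialize the general description of automorphisms of almost Abelian Lie algebras from Proposition 10 of \cite{Avetisyan2016} to the case $\ad_{e_0}=\operatorname{J}(\aleph)$, and then read off the explicit block structure from the linear-equation results of Section \ref{LinEqJordOp}. First I would recall from Proposition 10 of \cite{Avetisyan2016} that, for an almost Abelian Lie algebra $\mathbf{L}=\F e_0\rtimes\mathbf{V}$ other than $\mathbf{H}_\F$, every automorphism preserves the distinguished ideal $\mathbf{V}$ and hence has the block-triangular form $\left(\begin{smallmatrix}\nu & 0\\ \gamma & \Delta\end{smallmatrix}\right)$ relative to the splitting $\mathbf{L}=\F e_0\oplus\mathbf{V}$, where $\nu\in\F^*$ records the action on $e_0$ modulo $\mathbf{V}$, $\gamma\in\mathbf{V}$ is the translational part, and $\Delta\in\GL_\F(\mathbf{V})$. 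The sole compatibility condition imposed by the bracket $[e_0,v]=\ad_{e_0}v$ (abelianness of $\mathbf{V}$ annihilating the $\gamma$-contribution) is precisely $\Delta\,\ad_{e_0}=\nu\,\ad_{e_0}\,\Delta$. With $\ad_{e_0}=\operatorname{J}(\aleph)$ this reads $\Delta\in\mathrm{C}_\F(\operatorname{J}(\aleph),\nu\operatorname{J}(\aleph))$, so the remaining task is to describe the invertible elements of this twisted commutant together with the admissible scalars $\nu$.

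Next I would pin down $\nu$. An invertible $\Delta$ with $\Delta\operatorname{J}(\aleph)=\nu\operatorname{J}(\aleph)\Delta$ is exactly a similarity $\operatorname{J}(\aleph)\sim\nu\operatorname{J}(\aleph)$, so by Proposition \ref{DilSimMultProp} such a $\Delta$ exists if and only if $\nu\star\aleph=\aleph$, that is $\nu\in\Dil(\aleph)$. This accounts for the constraint $\nu\in\Dil(\aleph)$ in the statement and reduces the problem to scalars with $\nu\star\aleph=\aleph$.

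Then I would invoke Proposition \ref{lambdaCommProp} with $\lambda=\nu$ to factor every $\Delta\in\mathrm{C}_\F(\operatorname{J}(\aleph),\nu\operatorname{J}(\aleph))$ as $\Delta=\operatorname{V}(\nu;\aleph)\,\operatorname{R}$ with $\operatorname{R}\in\mathrm{C}_\F(\operatorname{J}(\aleph),\operatorname{J}(\nu\star\aleph))=\mathrm{C}_\F(\operatorname{J}(\aleph))$, the last equality using $\nu\star\aleph=\aleph$. The diagonal intertwiner $\operatorname{V}(\nu;\aleph)$ has blocks $\operatorname{V}_n(\nu^{-1})\otimes\operatorname{V}_{\deg p}(\nu|\nu|^{\epsilon-1})$; expanding this via convention (\ref{MatrixTensorProd}) and the definition of $\operatorname{V}_n$ makes the $i$-th diagonal sub-block equal to $\nu^{-i}\operatorname{V}_{\deg p}(\nu|\nu|^{\epsilon-1})$, which reproduces the explicit diagonal matrices displayed for both $\epsilon=1$ and $\epsilon=0$. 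For the factor $\operatorname{R}$, Proposition \ref{lambdaCommProp} places its blocks in $\delta_{p,\nu\star q}\mathrm{C}_\Fp(\operatorname{N}_m,\operatorname{N}_n)$ with finitely many nonzero blocks per column, and Lemma \ref{NCommutantLem}, applied over the extension field $\Fp$ in place of $\F$, identifies $\mathrm{C}_\Fp(\operatorname{N}_m,\operatorname{N}_n)$ with the shifted upper-triangular Toeplitz matrices $\mathrm{0}\llcorner\Delta$, $\Delta\in\Fp[\operatorname{N}_{\min\{m,n\}}]$; writing out the two shapes according to $m\ge n$ or $m<n$ yields exactly the matrices $\operatorname{R}_{p;n,\beta;m,\alpha}$ with entries $r_1,\ldots,r_{\min\{m,n\}}\in\Fp$. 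Assembling the two factors gives the stated formula for $\Delta_{q,n,\beta;p,m,\alpha}$, and intersecting with $\GL_\F(\mathbf{V})$ imposes invertibility.

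The step I expect to demand the most care is not any single computation but the bookkeeping that ties the three cited results together consistently: matching the source/target multi-indices $(p,m,\alpha)$ and $(q,n,\beta)$ through the dilation $p=\nu\star q$, verifying that the finite-column condition of Proposition \ref{lambdaCommProp} is exactly what guarantees $\operatorname{R}$, and hence $\Delta$, is a well-defined operator, and confirming that the explicit tensor-product expansions of $\operatorname{V}_n(\nu^{-1})\otimes\operatorname{V}_{\deg p}(\nu|\nu|^{\epsilon-1})$ agree with the displayed matrices in both conventions $\epsilon\in\{0,1\}$. Once the indices are aligned the underlying algebra is routine, which is why the result is only a corollary of the machinery already developed.
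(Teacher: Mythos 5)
Your proposal is correct and follows essentially the same route as the paper, which presents this proposition precisely as a corollary of Proposition 10 of \cite{Avetisyan2016} (block-triangular form and the intertwining condition $\Delta\,\ad_{e_0}=\nu\,\ad_{e_0}\Delta$), Proposition \ref{lambdaCommProp} (the factorization $\Delta=\operatorname{V}(\nu;\aleph)\operatorname{R}$ with the $\delta_{p,\nu\star q}$ block structure and finite-column condition), and Lemma \ref{NCommutantLem} (the Toeplitz form of the blocks); your additional appeal to Proposition \ref{DilSimMultProp} to force $\nu\in\Dil(\aleph)$ is exactly the detail the paper leaves implicit.
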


We can also describe the Lie algebra of derivations in a similar manner. Here we will distinguish between nilpotent and non-nilpotent Lie algebras $\aA(\aleph)$, see Remark \ref{NilpoRem}. The below results are an immediate consequence of Lemma \ref{NCommutantLem}, Proposition \ref{lambdaCommProp}, Proposition \ref{InhomCommProp} and Proposition 14 in \cite{Avetisyan2016}.
\begin{proposition}\label{DeraANonnil} The algebra of derivations of the indecomposable Jordanable non-nilpotent almost Abelian Lie algebra $\aA(\aleph)=\F e_0\rtimes\mathbf{V}$ with $\ad_{e_0}=\operatorname{J}(\aleph)$ is
$$
\Der\left(\aA(\aleph)\right)=\left\{\begin{pmatrix}
0 & 0\\
\gamma & \Delta
\end{pmatrix}\,\vline\quad\gamma\in\mathbf{V},\quad\Delta\in\End_\F(\F^{\dim_\F\aleph})\right\},
$$
$$
\Delta_{q,n,\beta;p,m,\alpha}=\delta_{p,q}\operatorname{R}_{p;n,\beta;m,\alpha}\in\Hom_\F(\Fp^m,\Fp^n),
$$

$$
\operatorname{R}_{p;n,\beta;m,\alpha}=\begin{pmatrix}
0 & \ldots & 0 & r_1 & r_2 & \ldots & r_l\\
0 & \ldots & 0 & 0 & r_1 & \ldots & r_{l-1}\\
\ldots & \ldots & \ldots & \ldots & \ldots & \ldots & \ldots\\
0 & \ldots & 0 & 0 & 0 & \ldots & r_1
\end{pmatrix}\in\End_\Fp(\Fp^m,\Fp^n)\quad\mbox{if}\quad m\ge n,
$$

$$
\operatorname{R}_{p;n,\beta;m,\alpha}=\begin{pmatrix}
r_1 & r_2 & \ldots & r_l\\
0 & r_1 & \ldots & r_{l-1}\\
\ldots & \ldots & \ldots & \ldots\\
0 & 0 & \ldots & r_1\\
0 & 0 & \ldots & 0\\
\ldots & \ldots & \ldots & \ldots\\
0 & 0 & \ldots & 0
\end{pmatrix}\in\End_\Fp(\Fp^m,\Fp^n)\quad\mbox{if}\quad m<n,
$$

$$
l=\min\{m,n\},\quad d=\deg p,\quad r_1,\ldots,r_l\in\Fp.
$$
For every $p,m,\alpha$ only finitely many $q,n,\beta$ correspond to nonzero blocks.
\end{proposition}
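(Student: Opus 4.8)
The plan is to begin from Proposition~14 of \cite{Avetisyan2016}, which writes an arbitrary derivation $D$ of $\mathbf{L}=\F e_0\rtimes\mathbf{V}$, with $\ad_{e_0}=\operatorname{T}=\operatorname{J}(\aleph)$, as a block operator relative to the splitting $\mathbf{L}=\F e_0\oplus\mathbf{V}$,
$$
D=\begin{pmatrix} a & b\\ \gamma & \Delta\end{pmatrix},\quad a\in\F,\quad\gamma\in\mathbf{V},\quad\Delta\in\End_\F(\mathbf{V}),
$$
with $b$ a linear functional on $\mathbf{V}$. Imposing the Leibniz rule on the brackets $[e_0,v]=\operatorname{T}v$ and $[v,w]=0$ produces the defining relations $\Delta\operatorname{T}-\operatorname{T}\Delta=a\operatorname{T}$, $b\circ\operatorname{T}=0$ and $b(v)\operatorname{T}w=b(w)\operatorname{T}v$ for all $v,w\in\mathbf{V}$, while $\gamma$ stays entirely free. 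Thus the whole computation reduces to the linear operator problems already treated in Section~\ref{LinEqJordOp}.

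First I would pin down the scalar $a$. If $a\neq0$, then $a^{-1}\Delta$ solves $\operatorname{Y}\operatorname{T}-\operatorname{T}\operatorname{Y}=\operatorname{T}$, which by Proposition~\ref{InhomCommProp} admits a solution only when $\supp\aleph=\{X\}$, that is, only when $\aA(\aleph)$ is nilpotent by Remark~\ref{NilpoRem}. Since we are in the non-nilpotent case this forces $a=0$. Next I would eliminate the functional $b$. A nonzero $b$, say $b(v_0)\neq0$, combined with $b(v)\operatorname{T}w=b(w)\operatorname{T}v$ gives $\operatorname{T}w=b(v_0)^{-1}b(w)\,\operatorname{T}v_0$ for every $w$, so $\operatorname{rank}\operatorname{T}\le1$; together with $b\circ\operatorname{T}=0$ this yields $\operatorname{T}^2=0$ (or $\operatorname{T}=0$ if $\operatorname{T}v_0=0$), hence $\operatorname{T}$ is nilpotent or zero, again contradicting non-nilpotency. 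Therefore $b=0$ and the top row of $D$ vanishes, matching the stated block shape; the assumption that $\aA(\aleph)$ is indecomposable (equivalently $\mathbf{W}=0$, i.e.\ $\aleph(X,1)=0$) is what keeps Proposition~14 in the clean form used here by removing the extra derivations supported on the central summand $\mathbf{W}$.

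With $a=0$ the surviving equation is $[\Delta,\operatorname{J}(\aleph)]=0$, i.e.\ $\Delta\in\mathrm{C}_\F(\operatorname{J}(\aleph))$. Here I would invoke Proposition~\ref{lambdaCommProp} at $\lambda=1$: since $\operatorname{V}(1;\aleph)=\id$ and $1\star\aleph=\aleph$, each block of $\Delta$ lies in $\delta_{p,q}\mathrm{C}_\Fp(\operatorname{N}_m,\operatorname{N}_n)$, with only finitely many nonzero blocks in every column. Finally Lemma~\ref{NCommutantLem} identifies $\mathrm{C}_\Fp(\operatorname{N}_m,\operatorname{N}_n)$ with the matrices $\mathrm{0}\llcorner\Delta$, $\Delta\in\Fp[\operatorname{N}_{\min\{m,n\}}]$; spelling these out separately for $m\ge n$ and for $m<n$ gives exactly the upper-triangular Toeplitz-corner forms $\operatorname{R}_{p;n,\beta;m,\alpha}$ displayed in the statement, with free parameters $r_1,\ldots,r_{\min\{m,n\}}\in\Fp$.

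The routine part is this last step, where the answer is merely assembled from results already proved. The genuine content, and the step I expect to be the main obstacle, is the elimination of the scalar $a$ and the functional $b$: one must recognize that the case $a\neq0$ is precisely the inhomogeneous equation of Proposition~\ref{InhomCommProp} and is therefore obstructed exactly by non-nilpotency, and one must run the short rank argument for $b$ carefully, checking that $b\circ\operatorname{T}=0$ together with $\operatorname{rank}\operatorname{T}\le1$ forces $\operatorname{T}$ to be nilpotent. A secondary point requiring care is verifying that the constraints coming from Proposition~14 of \cite{Avetisyan2016} coincide with the relations used above.
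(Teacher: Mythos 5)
Your proposal is correct and follows essentially the same route as the paper: the paper offers no written proof beyond declaring the result an immediate consequence of Lemma~\ref{NCommutantLem}, Proposition~\ref{lambdaCommProp}, Proposition~\ref{InhomCommProp} and Proposition~14 of \cite{Avetisyan2016}, and your argument is precisely that assembly --- Proposition~\ref{InhomCommProp} plus Remark~\ref{NilpoRem} to kill the scalar $a$, and Proposition~\ref{lambdaCommProp} at $\lambda=1$ together with Lemma~\ref{NCommutantLem} to identify the blocks of $\Delta$. The extra material you supply (deriving the constraint equations from the Leibniz rule and the rank argument eliminating the functional $b$) simply fills in the content of the cited Proposition~14, which the paper treats as a black box, so there is no substantive divergence.
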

\begin{proposition} The algebra of derivations of the indecomposable Jordanable nilpotent almost Abelian Lie algebra $\aA(\aleph)=\F e_0\rtimes\mathbf{V}$ with $\ad_{e_0}=\operatorname{J}(\aleph)$ other than $\mathbf{H}_\F$ is
$$
\Der\left(\aA(\aleph)\right)=\left\{\begin{pmatrix}
\alpha & 0\\
\gamma & \Delta
\end{pmatrix}\,\vline\quad\alpha\in\F,\quad\gamma\in\mathbf{V},\quad\Delta\in\End_\F(\F^{\dim_\F\aleph})\right\},
$$
$$
\Delta_{X,n,\beta;X,m,\alpha}=\alpha\delta_{n,m}\operatorname{U}_n+\operatorname{R}_{n,\beta;m,\alpha}\in\Hom_\F(\F^m,\F^n),
$$

$$
\operatorname{U}_n=\begin{pmatrix}
n-1 & 0 & \ldots & 0\\
0 & n-2 & \ldots & 0\\
\ldots & \ldots & \ldots & \ldots\\
0 & 0 & \ldots & 0
\end{pmatrix}\in\End_\F(\F^n),
$$

$$
\operatorname{R}_{n,\beta;m,\alpha}=\begin{pmatrix}
0 & \ldots & 0 & r_1 & r_2 & \ldots & r_l\\
0 & \ldots & 0 & 0 & r_1 & \ldots & r_{l-1}\\
\ldots & \ldots & \ldots & \ldots & \ldots & \ldots & \ldots\\
0 & \ldots & 0 & 0 & 0 & \ldots & r_1
\end{pmatrix}\in\End_\F(\F^m,\F^n)\quad\mbox{if}\quad m\ge n,
$$

$$
\operatorname{R}_{n,\beta;m,\alpha}=\begin{pmatrix}
r_1 & r_2 & \ldots & r_l\\
0 & r_1 & \ldots & r_{l-1}\\
\ldots & \ldots & \ldots & \ldots\\
0 & 0 & \ldots & r_1\\
0 & 0 & \ldots & 0\\
\ldots & \ldots & \ldots & \ldots\\
0 & 0 & \ldots & 0
\end{pmatrix}\in\End_\F(\F^m,\F^n)\quad\mbox{if}\quad m<n,
$$

$$
l=\min\{m,n\},\quad r_1,\ldots,r_l\in\F.
$$
For every $m,\alpha$ only finitely many $n,\beta$ correspond to nonzero blocks.
\end{proposition}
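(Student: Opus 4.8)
The plan is to obtain the statement by specialising Proposition 14 of \cite{Avetisyan2016} to the nilpotent Jordan form $\ad_{e_0}=\operatorname{J}(\aleph)$ and then making the resulting operator equation explicit. Writing a linear map on $\mathbf{L}=\F e_0\rtimes\mathbf{V}$ in block form $\begin{pmatrix}\alpha&\varphi\\\gamma&\Delta\end{pmatrix}$ relative to the splitting $\F e_0\oplus\mathbf{V}$, with $\alpha\in\F$, $\varphi\in\mathbf{V}^*$, $\gamma\in\mathbf{V}$ and $\Delta\in\End_\F(\mathbf{V})$, the Leibniz rule applied to the three bracket types $[e_0,e_0]=0$, $[v,w]=0$ and $[e_0,v]=\ad_{e_0}v$ is equivalent to three conditions: $\varphi(v)\ad_{e_0}w=\varphi(w)\ad_{e_0}v$ for all $v,w$; the vanishing of $\varphi$ on $\operatorname{im}\ad_{e_0}$; and the inhomogeneous commutator relation $\Delta\ad_{e_0}-\ad_{e_0}\Delta=\alpha\,\ad_{e_0}$. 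The component $\gamma$ is unconstrained, because $[\gamma,v]=0$ for all $v\in\mathbf{V}$. This is precisely the parametrisation recorded in Proposition 14 of \cite{Avetisyan2016}.

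The one genuinely structural step, where the hypotheses \emph{indecomposable} and \emph{other than $\mathbf{H}_\F$} are spent, is to show $\varphi=0$. By Remark \ref{NilpoRem} nilpotency means $\supp\aleph=\{X\}$, so $\ad_{e_0}=\bigoplus_{n}\bigoplus_{\aleph(X,n)}\operatorname{N}_n$; and indecomposability forces $\aleph(X,1)=0$, since a block $\operatorname{N}_1$ would split off a one-dimensional central ideal. Hence every block has size $n\ge2$ and $\dim_\F\operatorname{im}\ad_{e_0}=\sum_{n\ge2}(n-1)\aleph(X,n)\ge1$, with equality exactly when $\aleph(X,2)=1$ and all other entries vanish, i.e.\ exactly for $\mathbf{H}_\F$. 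Excluding that case gives $\dim_\F\operatorname{im}\ad_{e_0}\ge2$, so a nonzero $\varphi$ is impossible: fixing $v_0$ with $\varphi(v_0)\ne0$, the rank-one condition forces $\ad_{e_0}w=\tfrac{\varphi(w)}{\varphi(v_0)}\ad_{e_0}v_0$ for every $w$, confining the image to the line $\F\,\ad_{e_0}v_0$. Therefore $\varphi=0$ and the top-right block vanishes.

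It then remains to solve $\Delta\operatorname{J}(\aleph)-\operatorname{J}(\aleph)\Delta=\alpha\operatorname{J}(\aleph)$ and to display the solution blockwise. Since $\supp\aleph=\{X\}$, Proposition \ref{InhomCommProp} applies for every $\alpha\in\F$: scaling its particular solution gives $\alpha\operatorname{U}(\aleph)$, whose bracket with $\operatorname{J}(\aleph)$ is $\alpha\operatorname{J}(\aleph)$, while the homogeneous part ranges over $\mathrm{C}_\F(\operatorname{J}(\aleph))$; as this commutant is an $\F$-subspace, the cases $\alpha=0$ and $\alpha\ne0$ merge into $\Delta\in\alpha\operatorname{U}(\aleph)+\mathrm{C}_\F(\operatorname{J}(\aleph))$. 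The term $\alpha\operatorname{U}(\aleph)$ contributes precisely the diagonal pieces $\alpha\delta_{n,m}\operatorname{U}_n$. For the commutant, Proposition \ref{lambdaCommProp} with $\lambda=1$ (noting $\Fp=\F$ because $p=X$) reduces every block to an element of $\mathrm{C}_\F(\operatorname{N}_m,\operatorname{N}_n)$, and Lemma \ref{NCommutantLem} identifies these with the shifted upper-triangular Toeplitz matrices $\operatorname{R}_{n,\beta;m,\alpha}$ of the statement, subject to the column-finiteness condition. Assembling the two contributions yields the claimed form of $\Delta$, and hence of $\Der(\aA(\aleph))$.

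I expect the $\varphi=0$ argument to be the main obstacle, as it is the only point where the global algebraic structure — indecomposability together with the exclusion of $\mathbf{H}_\F$ — must be invoked; once $\varphi$ is removed, the description reduces to the already-solved equations of Proposition \ref{InhomCommProp} and Lemma \ref{NCommutantLem} (via Proposition \ref{lambdaCommProp}), and the remaining work is purely computational.
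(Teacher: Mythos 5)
Your proof is correct and takes essentially the same route as the paper, whose entire written proof is the declaration that the result is ``an immediate consequence of Lemma \ref{NCommutantLem}, Proposition \ref{lambdaCommProp}, Proposition \ref{InhomCommProp} and Proposition 14 in \cite{Avetisyan2016}'' --- precisely the ingredients you assemble. Your only additions are to re-derive the content of Proposition 14 (the block form, the rank-one condition on $\varphi$, and the inhomogeneous commutator equation) from the Leibniz rule, and to spell out the $\varphi=0$ argument where indecomposability and the exclusion of $\mathbf{H}_\F$ are spent; these details are correct and are exactly what the paper delegates to the cited reference.
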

Again, for the Heisenberg algebra the answer is well known and can be found, for instance, in \cite{Avetisyan2016}.

We now proceed to the description of quadratic Casimir elements of the Jordanable almost Abelian Lie algebra $\aA(\aleph)=\F e_0\rtimes\mathbf{V}$. Following Section 6 of \cite{Avetisyan2016} we denote by $\mathfrak{h}:\aA(\aleph)\to\mathrm{U}(\aA(\aleph))$ the embedding of the Lie algebra into its universal enveloping algebra, and choose a basis $\{e_i\}_{i\in\dim_\F\aleph}$ in $\mathbf{V}$. Note the difference in the meaning of the symbol $\aleph$ between the preset paper and \cite{Avetisyan2016}. What was $\aleph$ in \cite{Avetisyan2016} now has become $\dim_\F\aleph$, where we identify the cardinality of a set with the set itself. One particular basis can be constructed starting from the $\Fp$-bases $\{e_\alpha^m(p,n)\}$ in each $\Fp^n$ as above. In order to build an $\F$-basis in $\Fp^n$ we generate elements $e_\alpha^{m,k}(p,n)=x_p^ke_\alpha^m(p,n)$, $k=1,\ldots,\deg p-1$. This corresponds to the labelling $i=(p,n,\alpha,m,k)$. If we let $x_i=\mathfrak{h}(e_i)$ and $\ad x_i=\mathfrak{h}(\ad_{e_0}e_i)$ then since $\ad_{e_0}=\operatorname{J}(\aleph)$ in the basis $\{e_i\}$ we have that in the basis $\{x_i\}$ of $\mathfrak{h}(\mathbf{V})$ the operator $\ad=\operatorname{J}(\aleph)$ as well.

A quadratic form $Q(x)$ on $\mathfrak{h}(\mathbf{V})$ can be written as
$$
Q(x)=\sum_{i,j\in\dim_\F\aleph}\operatorname{A}_{i;j}x_ix_j,
$$
where $\operatorname{A}\in\End_\F(\mathfrak{h}(\mathbf{V}))$ is symmetric in the basis $\{e_i\}$ and has finitely many nonzero entries in every row (it always has finitely many non-zero entries in every column as long as it is a linear operator). Formally we can also write $Q(x)=x^\top\operatorname{A}x$.
\begin{proposition}\label{Cas2aA} Quadratic Casimir elements $Q(x)\in\mathcal{Z}(\mathrm{U}(\aA(\aleph)))$ of the Jordanable almost Abelian Lie algebra $\aA(\aleph)$ are $x^\top\operatorname{A}x$, where the operator $\operatorname{A}\in\End_\F(\mathfrak{h}(\mathbf{V}))$ is symmetric, has finitely many nonzero entries in every column, and is of the form
$$
\operatorname{A}_{q,n,\beta;p,m,\alpha}=\delta_{p,(-1)\star q}(\operatorname{P}_n\otimes\operatorname{W}^\epsilon_q)(\operatorname{V}_n(-1)\otimes\operatorname{V}_{\deg q}(-1))\operatorname{R}_{q,n,\beta;p,m,\alpha},
$$
$$
(\operatorname{P}_n\otimes\operatorname{W}^\epsilon_q)=
$$

$$
\begin{pmatrix}
0 & \ldots & 0 & \begin{matrix}
0 & 0 & \ldots & 0 & 1\\
0 & 0 & \ldots & 1 & \epsilon\mu_1\\
0 & 0 & \ldots & \epsilon\mu_1 & \mu_2\\
\ldots & \ldots & \ldots & \ldots & \ldots\\
1 & \epsilon\mu_1 & \ldots & \mu_{d-2} & \mu_{d-1}
\end{matrix}\\
0 & \ldots & \begin{matrix}
0 & 0 & \ldots & 0 & 1\\
0 & 0 & \ldots & 1 & \epsilon\mu_1\\
0 & 0 & \ldots & \epsilon\mu_1 & \mu_2\\
\ldots & \ldots & \ldots & \ldots & \ldots\\
1 & \epsilon\mu_1 & \ldots & \mu_{d-2} & \mu_{d-1}
\end{matrix} & 0\\
\ldots & \ldots & \ldots & \ldots\\
\begin{matrix}
0 & 0 & \ldots & 0 & 1\\
0 & 0 & \ldots & 1 & \epsilon\mu_1\\
0 & 0 & \ldots & \epsilon\mu_1 & \mu_2\\
\ldots & \ldots & \ldots & \ldots & \ldots\\
1 & \epsilon\mu_1 & \ldots & \mu_{d-2} & \mu_{d-1}
\end{matrix} & \ldots & 0 & 0
\end{pmatrix}
$$

$$
\left(\operatorname{V}_n(-1)\otimes\operatorname{V}_{\deg q}(-1)\right)=
$$

$$
\begin{pmatrix}
\begin{matrix}
1 & 0 & \ldots & 0\\
0 & -1 & \ldots & 0\\
\ldots & \ldots & \ldots & \ldots\\
0 & 0 & \ldots & (-1)^{d-1}
\end{matrix} & 0 & \ldots & 0\\
0 & \begin{matrix}
-1 & 0 & \ldots & 0\\
0 & 1 & \ldots & 0\\
\ldots & \ldots & \ldots & \ldots\\
0 & 0 & \ldots & (-1)^{d-2}
\end{matrix} & \ldots & 0\\
\ldots & \ldots & \ldots & \ldots\\
0 & 0 & \ldots & \begin{matrix}
(-1)^{1-n} & 0 & \ldots & 0\\
0 & (-1)^{2-n} & \ldots & 0\\
\ldots & \ldots & \ldots & \ldots\\
0 & 0 & \ldots & (-1)^{d-n}
\end{matrix}
\end{pmatrix},
$$

$$
\operatorname{R}_{p;n,\beta;m,\alpha}=\begin{pmatrix}
0 & \ldots & 0 & r_1 & r_2 & \ldots & r_l\\
0 & \ldots & 0 & 0 & r_1 & \ldots & r_{l-1}\\
\ldots & \ldots & \ldots & \ldots & \ldots & \ldots & \ldots\\
0 & \ldots & 0 & 0 & 0 & \ldots & r_1
\end{pmatrix}\in\End_\Fp(\Fp^m,\Fp^n)\quad\mbox{if}\quad m\ge n,
$$

$$
\operatorname{R}_{p;n,\beta;m,\alpha}=\begin{pmatrix}
r_1 & r_2 & \ldots & r_l\\
0 & r_1 & \ldots & r_{l-1}\\
\ldots & \ldots & \ldots & \ldots\\
0 & 0 & \ldots & r_1\\
0 & 0 & \ldots & 0\\
\ldots & \ldots & \ldots & \ldots\\
0 & 0 & \ldots & 0
\end{pmatrix}\in\End_\Fp(\Fp^m,\Fp^n)\quad\mbox{if}\quad m<n,
$$

$$
l=\min\{m,n\},\quad d=\deg q,\quad r_1,\ldots,r_l\in\Fp,
$$
and $\epsilon$ is per (\ref{epsilonDef}). For every $q\in\sigma_\F$ the numbers $\mu_1,\ldots,\mu_{d-1}$ are defined as in (\ref{mundef}).
\end{proposition}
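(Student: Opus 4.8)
The plan is to reduce the centrality of $Q(x)=x^\top\operatorname{A}x$ to a single matrix equation and then read off its solutions from the results of Section \ref{LinEqJordOp}. Following Section 6 of \cite{Avetisyan2016}, I would first note that $Q(x)\in\mathcal{Z}(\mathrm{U}(\aA(\aleph)))$ precisely when it commutes with every generator of $\aA(\aleph)$. Since $\mathbf{V}$ is Abelian, $[\mathfrak{h}(v),x_ix_j]=0$ for all $v\in\mathbf{V}$ by the Leibniz rule, so the only non-trivial condition is $[e_0,Q(x)]=0$. Using $[e_0,x_i]=\mathfrak{h}(\ad_{e_0}e_i)$ together with $\ad=\operatorname{J}(\aleph)$ in the basis $\{x_i\}$, the Leibniz rule, and the commutativity $x_ix_j=x_jx_i$ in the enveloping algebra, I would expand $[e_0,Q(x)]$ and collect the coefficient of each monomial $x_ix_j$. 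Because the monomials $\{x_ix_j\}_{i\le j}$ are linearly independent by the PBW theorem, centrality collapses to
$$
\operatorname{J}(\aleph)^\top\operatorname{A}+\operatorname{A}\operatorname{J}(\aleph)=0,
$$
to be solved among symmetric operators $\operatorname{A}$ that have finitely many non-zero entries in every row and column.

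The key observation is that this is exactly the equation $\operatorname{Z}\operatorname{J}(\aleph)+\operatorname{J}(\aleph)^\top\operatorname{Z}=0$ already solved in Proposition \ref{TTtransCommProp}, with $\operatorname{Z}=\operatorname{A}$. Applying that proposition, every solution has the form $\operatorname{A}=\operatorname{W}(\aleph)\operatorname{B}$ with $\operatorname{B}\in\mathrm{C}_\F(\operatorname{J}(\aleph),-\operatorname{J}(\aleph))$. I would then invoke Proposition \ref{lambdaCommProp} with $\lambda=-1$ to factor $\operatorname{B}=\operatorname{V}(-1;\aleph)\operatorname{R}$, where $\operatorname{R}\in\mathrm{C}_\F(\operatorname{J}(\aleph),\operatorname{J}((-1)\star\aleph))$, and finally Proposition \ref{CJpnJqmProp} together with Lemma \ref{NCommutantLem} to describe the individual blocks $\operatorname{R}_{q,n,\beta;p,m,\alpha}\in\delta_{p,(-1)\star q}\mathrm{C}_\Fp(\operatorname{N}_m,\operatorname{N}_n)$ as the upper-triangular Toeplitz matrices with entries $r_1,\ldots,r_l$ displayed in the statement. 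Substituting the block-diagonal forms of $\operatorname{W}(\aleph)$ and $\operatorname{V}(-1;\aleph)$, whose $(q,n,\beta)$-block is $\operatorname{P}_n\otimes\operatorname{W}^\epsilon_q$ respectively $\operatorname{V}_n(-1)\otimes\operatorname{V}_{\deg q}(-1)$, then produces the displayed block formula for $\operatorname{A}_{q,n,\beta;p,m,\alpha}$.

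It remains to handle the two structural requirements in the statement. The column-finiteness of $\operatorname{A}$ (equivalently, that for every $q$ only finitely many $(p,m,\alpha)$ with $p=(-1)\star q$ give non-zero blocks) is inherited directly from the hypothesis that $\operatorname{A}$ is a linear operator, exactly as in Proposition \ref{TTtransCommProp}. The symmetry $\operatorname{A}=\operatorname{A}^\top$ is a genuine extra constraint that is not automatic for arbitrary block-form solutions of the commutation equation: under $\operatorname{A}=\operatorname{A}^\top$ the $(q,n,\beta;p,m,\alpha)$-block must equal the transpose of the $(p,m,\alpha;q,n,\beta)$-block, and since $\delta_{p,(-1)\star q}$ pairs the root $q$ with $p=(-1)\star q$, this couples the Toeplitz parameters of paired blocks. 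I would retain symmetry as a standing hypothesis on $\operatorname{A}$, exactly as stated, rather than solving these coupling relations in closed form. The converse is then immediate: any symmetric, column-finite $\operatorname{A}$ of the displayed block form solves $\operatorname{J}(\aleph)^\top\operatorname{A}+\operatorname{A}\operatorname{J}(\aleph)=0$ by Proposition \ref{TTtransCommProp}, whence $[e_0,Q(x)]=0$ and $Q(x)$ is a Casimir element.

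The step I expect to be the main obstacle is the index bookkeeping in the reduction to the matrix equation, in particular getting the transpose placement right: the symmetric pairing of the commuting monomials $x_ix_j$ and the convention $\ad=\operatorname{J}(\aleph)$ in the basis $\{x_i\}$ are precisely what turn the raw commutator into the transposed equation $\operatorname{J}(\aleph)^\top\operatorname{A}+\operatorname{A}\operatorname{J}(\aleph)=0$, matching Proposition \ref{TTtransCommProp}. The remaining delicate point is to confirm that the intertwining identities $\operatorname{P}_n\operatorname{N}_n=\operatorname{N}_n^\top\operatorname{P}_n$ and $\operatorname{W}^\epsilon_px_p=x_p^\top\operatorname{W}^\epsilon_p$ from Lemmas \ref{PnNnLemma} and \ref{WpxpLemma} are exactly what convert this transposed equation into the ordinary commutant problem whose solution supplies the displayed $\operatorname{W}(\aleph)$, $\operatorname{V}(-1;\aleph)$ and $\operatorname{R}$ factors.
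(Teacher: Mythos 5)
Your proposal is correct and follows essentially the same route as the paper: reduce centrality of $Q(x)=x^\top\operatorname{A}x$ to the matrix equation $\operatorname{A}\operatorname{J}(\aleph)+\operatorname{J}(\aleph)^\top\operatorname{A}=0$ (the paper simply cites Proposition 18 of \cite{Avetisyan2016} for this step, which you re-derive via Leibniz and PBW), then factor the solutions through Proposition \ref{TTtransCommProp}, Proposition \ref{lambdaCommProp} with $\lambda=-1$, and Lemma \ref{NCommutantLem}, keeping symmetry and column-finiteness as side constraints exactly as in the statement. The only difference is expository: you spell out the reduction and the bookkeeping that the paper compresses into citations.
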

\begin{proof} By Proposition 18 of \cite{Avetisyan2016} we know that a quadratic form $Q(x)$ is a quadratic Casimir if and only if
$$
\ad Q(x)=x^\top\ad^\top\operatorname{A}x+x^\top\operatorname{A}\ad x=x^\top(\operatorname{A}\ad+\ad^\top\operatorname{A})x=0,
$$
that is,
$$
\operatorname{A}\ad+\ad^\top\operatorname{A}=\operatorname{A}\operatorname{J}(\aleph)+\operatorname{J}(\aleph)^\top\operatorname{A}=0.
$$
By Proposition \ref{TTtransCommProp} and Proposition \ref{lambdaCommProp} we know that this is equivalent to
$$
\operatorname{A}=\operatorname{W}(\aleph)\operatorname{V}(-1;\aleph)\operatorname{C},
$$
where $\operatorname{C}\in\mathrm{C}_\F(\operatorname{J}(\aleph),\operatorname{J}((-1)\star\aleph))$. Together with Lemma \ref{NCommutantLem} this yields the assertion. $\Box$ 
\end{proof}





\section{Examples}\label{Examples}
\subsection{Bianchi VI$_1$ algebra}

Let $\F=\mathbb{R}$ and $p,q\in\sigma_\F$,
$$
p(X)=X-1,\quad q(X)=X+1,
$$
i.e., $x_p=1$ and $x_q=-1$. Consider the multiplicity function $\aleph=(1\times p^1,1\times q^1)$ with notation from (\ref{AlephShort}), or even $\aleph=(1\times 1^1,1\times(-1)^1)$ if no confusion may arise. Bianchi VI$_1$ is the Jordanable almost Abelian Lie algebra
$$
\mathrm{Bi}(\mathrm{VI}_0)=\aA(\aleph)=\aA(1\times p^1,1\times q^1)=\mathbb{R}e_0\rtimes\mathbb{R}^2,
$$
with
$$
\ad_{e_0}=\operatorname{J}(\aleph)=\operatorname{J}(p,1)\oplus\operatorname{J}(q,1)=\begin{pmatrix}
1 & 0\\
0 & -1
\end{pmatrix}.
$$
We set up the adapted basis as in (\ref{AdaptBasis}),
$$
\mathbb{R}^2=\mathbb{R}e^1_1(p,1)\oplus\mathbb{R}e^1_1(q,1),\quad e^1_1(p,1)=(1,0)^\top,\quad e^1_1(q,1)=(0,1)^\top.
$$
From (\ref{ZLBasis}) we see that the centre of the algebra is
$$
\mathcal{Z}(\aA(1\times p^1,1\times q^1))=\ker\ad_{e_0}=0,
$$
while from (\ref{LkBasis}) we read that the lower central series is
$$
\aA(1\times p^1,1\times q^1)_{(k)}=\mathbb{R}e^1_1(p,1)\oplus\mathbb{R}e^1_1(q,1)=\mathbb{R}^2.
$$
Formula (\ref{LDecompBasis}) tells us that $\aA(1\times p^1,1\times q^1)$ is indecomposable.

Let us now investigate the invariant (proper) subspaces $\mathbf{W}$ of $\ad_{e_0}$ which we do with the help of Corollary \ref{InvSubspCorr}. Since $\aleph(.,n)=0$ for $n>1$ and $\aleph(p,1)=\aleph(q,1)=1$, in formula (\ref{InvSubspTransform}) we have $k=1$, $\alpha=1$ and $l=1$, so that for all $n\in\mathbb{N}$ and $m=1,\ldots, n$ we get
$$
\eta_\beta^m(p,n)=\mu_p(n,\beta;1,1,m-1)e_1^1(p,1),\quad\eta_\beta^m(q,n)=\mu_q(n,\beta;1,1,m-1)e_1^1(q,1).
$$
Corollary states that for every $\beta\in\beth(p,n)$ there exists an $\alpha\in\aleph(p,\bar n)$ such that $\bar n\ge n$ and $\mu_p(n,\beta;n,\alpha,0)\neq0$. Since $\not\exists\alpha\in\aleph(p,n)$ and $\not\exists\alpha\in\aleph(q,n)$ for $n>1$ we know that $\beth(p,n)=0$ and $\beth(q,n)=0$ for $n>1$. Therefore only $n=1$ and $m=1$ are attained,
$$
\eta_\beta^1(p,1)=\mu_p(1,\beta;1,1,0)e_1^1(p,1),\quad\eta_\beta^1(q,1)=\mu_q(1,\beta;1,1,0)e_1^1(q,1).
$$
Further, since the sum in formula (\ref{InvSubspForm}) must be direct, the vectors $\eta_\beta^1(p,1)$ must be linearly independent for different $\beta\in\beth(p,1)$, which forces $\beth(p,1)=1$. Similarly, $\beth(q,1)=1$. Thus we are left with only two possibilites,
$$
\mu_q(1,1;1,1,0)=0\quad\mbox{and}\quad\mathbf{W}=\mathbb{R}e_1^1(p,1)\oplus0
$$
or
$$
\mu_p(1,1;1,1,0)=0\quad\mbox{and}\quad\mathbf{W}=0\oplus\mathbb{R}e_1^1(q,1).
$$
This result was immediately obvious from the form of the matrix $\ad_{e_0}$, but our discussion above aimed at demonstrating how to use Corollary \ref{InvSubspCorr} rather than at the result as such.

We now proceed to study the Lie subalgebras and ideals of $\aA(1\times p^1,1\times q^1)$, which we do with the help of Proposition 4 in \cite{Avetisyan2016}. The (proper) subalgebras are of one of the following forms:
\begin{itemize}
\item Abelian Lie subalgebras of the form $\mathbf{L}=\mathbf{W}\subsetneq\mathbb{R}^2$

\item Abelian Lie subalgebras of the form $\mathbf{L}=\mathbb{R}(e_0+v_0)$ for $v_0\in\mathbb{R}^2$, since $\ker\ad_{e_0}=0$

\item Almost Abelian Lie subalgebras of the form $\mathbf{L}=\mathbb{R}(e_0+v_0)\rtimes\mathbf{W}$ with $v_0\in\mathbb{R}^2$ and $\mathbf{W}=\mathbb{R}e_1^1(p,1)\oplus0$ or $\mathbf{W}=0\oplus\mathbb{R}e_1^1(q,1)$. The Lie brackets in these subalgebras are $[e_0+v_0,e^1_1(p,1)]=e^1_1(p,1)$ and $[e_0+v_0,e^1_1(q,1)]=-e^1_1(q,1)$, and both subalgebras are isomorphic to the 2-dimensional almost Abelian Lie algebra $\mathbf{ax+b}=\aA(1\times1^1)$
\end{itemize}
The proper ideals of $\aA(1\times p^1,1\times q^1)$ are among the following:
\begin{itemize}
\item Abelian ideals $\mathbb{R}e_1^1(p,1)\oplus0$, $0\oplus\mathbb{R}e_1^1(q,1)$ and $\mathbb{R}^2$
\end{itemize}
The other two possibilities in Proposition 4 in \cite{Avetisyan2016} offer $\mathbf{L}=\mathbb{R}(e_0+v_0)\rtimes\mathbf{W}$ with $\ad_{e_0}\mathbb{R}^2\subset\mathbf{W}$. But since in our case $\ad_{e_0}\mathbb{R}^2=\mathbb{R}^2$ we obtain $\mathbf{L}=\mathbb{R}(e_0+v_0)\rtimes\mathbb{R}^2$, which is not a proper ideal.

Proceeding to automorphisms we first note that $\Dil(\aleph)=\{1,-1\}$ where
$$
(-1)\star p=q,\quad(-1)\star q=p.
$$
By Proposition \ref{AutaA} we establish that
$$
\Aut(\aA(1\times p^1,1\times q^1))=\left\{\begin{pmatrix}
1 & 0 & 0\\
\gamma_1 & \Delta_{p,1,1;p,1,1} & 0\\
\gamma_2 & 0 & \Delta_{q,1,1;q,1,1}
\end{pmatrix},\begin{pmatrix}
-1 & 0 & 0\\
\gamma_1 & 0 & \Delta_{p,1,1;q,1,1}\\
\gamma_2 & \Delta_{q,1,1;p,1,1} & 0
\end{pmatrix}\right\},
$$
$$
\gamma_1,\,\gamma_2,\,\Delta_{p,1,1;p,1,1},\,\Delta_{q,1,1;q,1,1},\,\Delta_{p,1,1;q,1,1},\,\Delta_{q,1,1;p,1,1}\in\mathbb{R},
$$
$$
\Delta_{p,1,1;p,1,1}\Delta_{q,1,1;q,1,1}\neq0\quad\Delta_{p,1,1;q,1,1}\Delta_{q,1,1;p,1,1}\neq0.
$$
For derivations we use Proposition \ref{DeraANonnil},
$$
\Der(\aA(1\times p^1,1\times q^1))=\left\{\begin{pmatrix}
0 & 0 & 0\\
\gamma_1 & \Delta_{p,1,1;p,1,1} & 0\\
\gamma_2 & 0 & \Delta_{q,1,1;q,1,1}
\end{pmatrix}\right\},
$$
$$
\gamma_1,\,\gamma_2,\,\Delta_{p,1,1;p,1,1},\,\Delta_{q,1,1;p,1,1}\in\mathbb{R}.
$$
Finally, quadratic Casimir elements are found from Proposition \ref{Cas2aA}. It follows that the matrix $\operatorname{A}$ has the form
$$
\operatorname{A}=\begin{pmatrix}
0 & a\\
b & 0
\end{pmatrix},\quad a,b\in\mathbb{R},
$$
and the symmetry forces $a=b$. If we denote $x=\mathfrak{h}(e_1^1(p,1))$ and $y=\mathfrak{h}(e_1^1(q,1))$ then the quadratic Casimir is
$$
Q=axy,\quad a\in\mathbb{R}.
$$

\subsection{Mautner algebra}

Let $\F=\mathbb{R}$ and $p,q\in\sigma_\F$,
$$
p(X)=X^2+1,\quad q(X)=X+4\pi^2.
$$
On this occasion let us adopt the traditional representation (\ref{xpMatrixFormR}),
$$
x_p=\begin{pmatrix}
0 & -1\\
1 & 0
\end{pmatrix},\quad x_q=\begin{pmatrix}
0 & -2\pi\\
2\pi & 0
\end{pmatrix},
$$
so that $\epsilon=0$. Consider the multiplicity function $\aleph=(1\times p^1,1\times q^1)$. The Mautner algebra (the Lie algebra of the prominent Mautner group) is the Jordanable almost Abelian Lie algebra
$$
\aA(\aleph)=\aA(1\times p^1,1\times q^1)=\mathbb{R}e_0\rtimes\mathbb{R}^4,
$$
with
$$
\ad_{e_0}=\operatorname{J}(\aleph)=\operatorname{J}(p,1)\oplus\operatorname{J}(q,1)=\begin{pmatrix}
0 & -1 & 0 & 0\\
1 & 0 & 0 & 0\\
0 & 0 & 0 & -2\pi\\
0 & 0 & -2\pi & 0
\end{pmatrix}.
$$
In both cases
$$
\Fp=\mathbb{R}(x_p)=\mathbb{R}(x_q)=\left\{\begin{pmatrix}
a & -b\\
b & a
\end{pmatrix}\,\vline\quad a,b\in\mathbb{R}\right\}\simeq\mathbb{C},
$$
and we will denote this representation of the field extension simply by $\mathbb{C}$. Had we chosen (\ref{xpMatrixForm}) instead we would need to distinguish between $\mathbb{R}(x_p)=\mathbb{C}_p$ and $\mathbb{R}(x_q)=\mathbb{C}_q$ which are both isomorphic to $\mathbb{C}$ but have different representations as $\mathbb{R}$-matrices. We set up the adapted basis as in (\ref{AdaptBasis}),
$$
\mathbb{R}^4=\mathbb{C}e^1_1(p,1)\oplus\mathbb{C}e^1_1(q,1),\quad e^1_1(p,1)=(1,0,0,0)^\top,\quad e^1_1(q,1)=(0,0,1,0)^\top.
$$
From (\ref{ZLBasis}) we see that the centre of the algebra is
$$
\mathcal{Z}(\aA(1\times p^1,1\times q^1))=\ker\ad_{e_0}=0,
$$
while from (\ref{LkBasis}) we read that the lower central series is
$$
\aA(1\times p^1,1\times q^1)_{(k)}=\mathbb{C}e^1_1(p,1)\oplus\mathbb{C}e^1_1(q,1)=\mathbb{R}^4.
$$
Formula (\ref{LDecompBasis}) tells us that $\aA(1\times p^1,1\times q^1)$ is indecomposable.

Let us now investigate the invariant (proper) subspaces $\mathbf{W}$ of $\ad_{e_0}$ which we do with the help of Corollary \ref{InvSubspCorr}. Since $\aleph(.,n)=0$ for $n>1$ and $\aleph(p,1)=\aleph(q,1)=1$, in formula (\ref{InvSubspTransform}) we have $k=1$, $\alpha=1$ and $l=1$, so that for all $n\in\mathbb{N}$ and $m=1,\ldots, n$ we get
$$
\eta_\beta^m(p,n)=\mu_p(n,\beta;1,1,m-1)e_1^1(p,1),\quad\eta_\beta^m(q,n)=\mu_q(n,\beta;1,1,m-1)e_1^1(q,1).
$$
Corollary states that for every $\beta\in\beth(p,n)$ there exists an $\alpha\in\aleph(p,\bar n)$ such that $\bar n\ge n$ and $\mu_p(n,\beta;n,\alpha,0)\neq0$. Since $\not\exists\alpha\in\aleph(p,n)$ and $\not\exists\alpha\in\aleph(q,n)$ for $n>1$ we know that $\beth(p,n)=0$ and $\beth(q,n)=0$ for $n>1$. Therefore only $n=1$ and $m=1$ are attained,
$$
\eta_\beta^1(p,1)=\mu_p(1,\beta;1,1,0)e_1^1(p,1),\quad\eta_\beta^1(q,1)=\mu_q(1,\beta;1,1,0)e_1^1(q,1).
$$
Further, since the sum in formula (\ref{InvSubspForm}) must be direct, the vectors $\eta_\beta^1(p,1)$ must be linearly independent for different $\beta\in\beth(p,1)$, which forces $\beth(p,1)=1$. Similarly, $\beth(q,1)=1$. Thus we are left with only two possibilites,
$$
\mu_q(1,1;1,1,0)=0\quad\mbox{and}\quad\mathbf{W}=\mathbb{C}e_1^1(p,1)\oplus0
$$
or
$$
\mu_p(1,1;1,1,0)=0\quad\mbox{and}\quad\mathbf{W}=0\oplus\mathbb{C}e_1^1(q,1).
$$

We now proceed to study the Lie subalgebras and ideals of $\aA(1\times p^1,1\times q^1)$, which we do with the help of Proposition 4 in \cite{Avetisyan2016}. The (proper) subalgebras are of one of the following forms:
\begin{itemize}
\item Abelian Lie subalgebras of the form $\mathbf{L}=\mathbf{W}\subsetneq\mathbb{R}^4$

\item Abelian Lie subalgebras of the form $\mathbf{L}=\mathbb{R}(e_0+v_0)$ for $v_0\in\mathbb{R}^4$, since $\ker\ad_{e_0}=0$

\item Almost Abelian Lie subalgebras of the form $\mathbf{L}=\mathbb{R}(e_0+v_0)\rtimes\mathbf{W}$ with $v_0\in\mathbb{R}^4$ and $\mathbf{W}=\mathbb{C}e_1^1(p,1)\oplus0$ or $\mathbf{W}=0\oplus\mathbb{C}e_1^1(q,1)$. The Lie brackets in these subalgebras are $[e_0+v_0,e^1_1(p,1)]=x_pe^1_1(p,1)$ and $[e_0+v_0,e^1_1(q,1)]=x_qe^1_1(q,1)$, and both subalgebras are isomorphic to the 3-dimensional almost Abelian Lie algebra $\mathrm{Bi}(\mathrm{VII}_0)=\mathbf{E}(2)=\aA(1\times p^1)$.
\end{itemize}
The proper ideals of $\aA(1\times p^1,1\times q^1)$ are among the following:
\begin{itemize}
\item Abelian ideals $\mathbb{C}e_1^1(p,1)\oplus0$, $0\oplus\mathbb{C}e_1^1(q,1)$ and $\mathbb{R}^4$
\end{itemize}
The other two possibilities in Proposition 4 in \cite{Avetisyan2016} offer $\mathbf{L}=\mathbb{R}(e_0+v_0)\rtimes\mathbf{W}$ with $\ad_{e_0}\mathbb{R}^4\subset\mathbf{W}$. But since in our case $\ad_{e_0}\mathbb{R}^4=\mathbb{R}^4$ we obtain $\mathbf{L}=\mathbb{R}(e_0+v_0)\rtimes\mathbb{R}^4$, which is not a proper ideal.

Proceeding to automorphisms we first note that $\Dil(\aleph)=\{1,-1\}$ where
$$
(-1)\star p=p,\quad(-1)\star q=q.
$$
By Proposition \ref{AutaA} we establish that
$$
\Aut(\aA(1\times p^1,1\times q^1))=\left\{\begin{pmatrix}
\pm1 & 0 & 0 & 0 & 0\\
\gamma_1 & \Delta_{p,1,1;p,1,1}^r & -\Delta_{p,1,1;p,1,1}^i & 0 & 0\\
\gamma_2 & \Delta_{p,1,1;p,1,1}^i & \Delta_{p,1,1;p,1,1}^r & 0 & 0\\
\gamma_3 & 0 & 0 & \Delta_{q,1,1;q,1,1}^r & -\Delta_{q,1,1;q,1,1}^i\\
\gamma_4 & 0 & 0 & \Delta_{q,1,1;q,1,1}^i & \Delta_{q,1,1;q,1,1}^r\\
\end{pmatrix}\right\},
$$

$$
\gamma_1,\,\gamma_2,\,\gamma_3,\,\gamma_4\in\mathbb{R},
$$

$$
\begin{pmatrix}
\Delta_{p,1,1;p,1,1}^r & -\Delta_{p,1,1;p,1,1}^i\\
\Delta_{p,1,1;p,1,1}^i & \Delta_{p,1,1;p,1,1}^r
\end{pmatrix},\begin{pmatrix}
\Delta_{q,1,1;p,1,1}^r & -\Delta_{q,1,1;p,1,1}^i\\
\Delta_{q,1,1;p,1,1}^i & \Delta_{q,1,1;p,1,1}^r
\end{pmatrix}\in\mathbb{C},
$$

$$
(\Delta_{p,1,1;p,1,1}^r)^2+(\Delta_{p,1,1;p,1,1}^i)^2+(\Delta_{q,1,1;q,1,1}^r)^2+(\Delta_{q,1,1;q,1,1}^i)^2>0.
$$
For derivations we use Proposition \ref{DeraANonnil},
$$
\Der(\aA(1\times p^1,1\times q^1))=\left\{\begin{pmatrix}
0 & 0 & 0 & 0 & 0\\
\gamma_1 & \Delta_{p,1,1;p,1,1}^r & -\Delta_{p,1,1;p,1,1}^i & 0 & 0\\
\gamma_2 & \Delta_{p,1,1;p,1,1}^i & \Delta_{p,1,1;p,1,1}^r & 0 & 0\\
\gamma_3 & 0 & 0 & \Delta_{q,1,1;q,1,1}^r & -\Delta_{q,1,1;q,1,1}^i\\
\gamma_4 & 0 & 0 & \Delta_{q,1,1;q,1,1}^i & \Delta_{q,1,1;q,1,1}^r\\
\end{pmatrix}\right\},
$$
$$
\begin{pmatrix}
\Delta_{p,1,1;p,1,1}^r & -\Delta_{p,1,1;p,1,1}^i\\
\Delta_{p,1,1;p,1,1}^i & \Delta_{p,1,1;p,1,1}^r
\end{pmatrix},\begin{pmatrix}
\Delta_{q,1,1;p,1,1}^r & -\Delta_{q,1,1;p,1,1}^i\\
\Delta_{q,1,1;p,1,1}^i & \Delta_{q,1,1;p,1,1}^r
\end{pmatrix}\in\mathbb{C}.
$$
Finally, quadratic Casimir elements are found from Proposition \ref{Cas2aA}. It follows that
$$
(\operatorname{P}_1\otimes\operatorname{W}_p^0)(\operatorname{V}_1(-1)\otimes\operatorname{V}_2(-1))=(\operatorname{P}_1\otimes\operatorname{W}_q^0)(\operatorname{V}_1(-1)\otimes\operatorname{V}_2(-1))=\begin{pmatrix}
0 & -1\\
1 & 0
\end{pmatrix},
$$
and the matrix $\operatorname{A}$ has the form
$$
\operatorname{A}=\begin{pmatrix}
a & -b & 0 & 0\\
b & a & 0 &0\\
0 & 0 & c & -d\\
0 & 0 & d & c
\end{pmatrix},\quad a,b,c,d\in\mathbb{R},
$$
while symmetry requires $b=d=0$. If we denote 
$$
x=\mathfrak{h}(e_1^1(p,1)),\quad y=\mathfrak{h}(x_pe_1^1(p,1)),\quad z=\mathfrak{h}(e_1^1(q,1)),\quad w=\mathfrak{h}(x_qe_1^1(q,1)),
$$
then the quadratic Casimir is
$$
Q=a(x^2+y^2)+c(z^2+w^2),\quad a,c\in\mathbb{R}.
$$

\subsection{Algebra $\aA_\mathbb{Q}(1\times\sqrt[3]{2}\,^2)\oplus\mathbb{Q}$}

Let $\F=\mathbb{Q}$ and $p,q\in\sigma_\F$,
$$
p(X)=X^3-2,\quad q(X)=X.
$$
According to (\ref{xpMatrixForm}),
$$
x_p=\begin{pmatrix}
0 & 0 & 2\\
1 & 0 & 0\\
0 & 1 & 0
\end{pmatrix},\quad x_q=0,
$$
and $\epsilon=1$. Consider the multiplicity function $\aleph=(1\times p^2,1\times q^1)$ and the Jordanable almost Abelian Lie algebra
$$
\aA_\mathbb{Q}(\aleph)=\aA(\aleph)=\aA(1\times p^2,1\times q^1)=\mathbb{Q}e_0\rtimes\mathbb{Q}^7,
$$
with
$$
\ad_{e_0}=\operatorname{J}(\aleph)=\operatorname{J}(p,2)\oplus\operatorname{J}(q,1)=\begin{pmatrix}[ccc|ccc|c]
0 & 0 & 2 & 1 & 0 & 0 & 0\\
1 & 0 & 0 & 0 & 1 & 0 & 0\\
0 & 1 & 0 & 0 & 0 & 1 & 0\\\hline
0 & 0 & 0 & 0 & 0 & 2 & 0\\
0 & 0 & 0 & 1 & 0 & 0 & 0\\
0 & 0 & 0 & 0 & 1 & 0 & 0\\\hline
0 & 0 & 0 & 0 & 0 & 0 & 0
\end{pmatrix}.
$$
In this case we have
$$
\Fp=\mathbb{Q}(x_p)=\left\{\begin{pmatrix}
a & 2c & 2b\\
b & a & 2c\\
c & b & a
\end{pmatrix}\,\vline\quad a,b,c\in\mathbb{Q}\right\}\simeq\mathbb{Q}(\sqrt[3]{2}),
$$
whereas $\mathbb{Q}(x_q)=\mathbb{Q}$. We set up the adapted basis as in (\ref{AdaptBasis}),
$$
\mathbb{Q}^7=\Fp e^1_1(p,2)\oplus\Fp e^2_1(p,2)\oplus\mathbb{Q}e^1_1(q,1),
$$
$$
e^1_1(p,2)=(1,0,0,0,0,0,0)^\top,\quad e^2_1(p,2)=(0,0,0,1,0,0,0)^\top,\quad e^1_1(q,1)=(0,0,0,0,0,0,1)^\top.
$$
From (\ref{ZLBasis}) we see that the centre of the algebra is
$$
\mathcal{Z}(\aA(1\times p^2,1\times q^1))=\ker\ad_{e_0}=\mathbb{Q}e^1_1(q,1),
$$
while from (\ref{LkBasis}) we read that the lower central series is
$$
\aA(1\times p^2,1\times q^1)_{(k)}=\Fp e^1_1(p,2)\oplus\Fp e^2_1(p,2)\oplus0.
$$
Formula (\ref{LDecompBasis}) tells us that 
$$
\aA(1\times p^2,1\times q^1)=\aA(1\times p^2)\oplus\mathbb{Q},
$$
hence the title of this subsection. 

Let us now investigate the invariant (proper) subspaces $\mathbf{W}$ of $\ad_{e_0}$ which we do with the help of Corollary \ref{InvSubspCorr}. First consider $p$. Since $\aleph(p,2)=1$ and $\aleph(p,n)=0$ for $n\neq2$ we have $k=2$ and $\alpha=1$ only in formula (\ref{InvSubspTransform}). Moreover, the corollary states that for every $\beta\in\beth(p,n)$ there exists an $\alpha\in\aleph(p,\bar n)$ such that $\bar n\ge n$ and $\mu_p(n,\beta;n,\alpha,0)\neq0$. Since $\not\exists\alpha\in\aleph(p,n)$ for $n\neq2$ we find that $n\in\{1,2\}$. For $n=1$ we get
$$
\eta_\beta^1(p,1)=\mu_p(1,\beta;2,1,0)\,e_1^1(p,2),\quad\forall\beta\in\beth(p,1).
$$
For $n=2$ we obtain for $\forall\beta\in\beth(p,2)$
$$
\eta_\beta^1(p,2)=\mu_p(2,\beta;2,1,0)\,e_1^1(p,2),\quad\eta_\beta^2(p,2)=\mu_p(2,\beta;2,1,1)\,e_1^1(p,2)+\mu_p(2,\beta;2,1,0)\,e_2^1(p,2).
$$
Now since the sum in formula (\ref{InvSubspForm}) must be direct, we see that either $\mu_p(1,\beta;2,1,0)=0$ and $\beth(p,1)=0$ or $\mu_p(2,\beta;2,1,0)=0$ and $\beth(p,2)=0$. Following similar reasoning, from $\aleph(q,1)=1$ and $\aleph(q,n)=0$ for $n>1$ we establish that $k=1$, $\alpha=1$ and $n=1$ in (\ref{InvSubspTransform}). Therefore
$$
\eta_\beta^1(q,1)=\mu_p(1,\beta;1,1,0)\,e_1^1(q,1),\quad\forall\beta\in\beth(q,1).
$$
Again arguments of linear independence show that $\beth(q,1)\le1$. To conclude, we have the following possibilities for an invariant proper subspace:
\begin{itemize}

\item $\beth=(1\times p^2)$ and $\mathbf{W}=\Fp e^1_1(p,2)\oplus\Fp e^2_1(p,2)$

\item $\beth=(1\times p^1)$ and $\mathbf{W}=\Fp e^1_1(p,2)$

\item $\beth=(1\times q^1)$ and $\mathbf{W}=\mathbb{Q}e^1_1(q,1)$

\item $\beth=(1\times p^1,1\times q^1)$ and $\mathbf{W}=\Fp e^1_1(p,2)\oplus\mathbb{Q}e^1_1(q,1)$

\end{itemize}

We now proceed to study the Lie subalgebras and ideals of $\aA(1\times p^2,1\times q^1)$, which we do with the help of Proposition 4 in \cite{Avetisyan2016}. The (proper) subalgebras are of one of the following forms:
\begin{itemize}
\item Abelian Lie subalgebras of the form $\mathbf{L}=\mathbf{W}\subsetneq\mathbb{Q}^7$

\item Abelian Lie subalgebras of the form $\mathbf{L}=\mathbb{Q}(e_0+v_0)$ or $\mathbf{L}=\mathbb{Q}(e_0+v_0)\oplus\mathbb{Q}e_1^1(q,1)$ for $v_0\in\mathbb{Q}^7$

\item Almost Abelian Lie subalgebras of the form $\mathbf{L}=\mathbb{Q}(e_0+v_0)\rtimes\mathbf{W}$ with $v_0\in\mathbb{Q}^7$ and either of the following: a) $\mathbf{W}=\Fp e_1^1(p,2)$ so that $\mathbf{L}\simeq\aA(1\times p^1)$, b) $\mathbf{W}=\Fp e_1^1(p,2)\oplus\Fp e_1^2(p,2)$ so that $\mathbf{L}\simeq\aA(1\times p^2)$, c) $\mathbf{W}=\Fp e_1^1(p,2)\oplus\mathbb{Q}e_1^1(q,1)$ so that $\mathbf{L}\simeq\aA(1\times p^1)\oplus\mathbb{Q}$.

\end{itemize}
The proper ideals of $\aA(1\times p^2,1\times q^1)$ are among the following:
\begin{itemize}

\item Abelian ideals $\mathbf{L}=\mathbf{W}\subseteq\mathbb{Q}^7$ that are invariant subspaces, proper as above or improper

\item Almost Abelian ideal $\mathbf{L}=\mathbb{Q}\rtimes(\Fp e_1^1(p,2)\oplus\Fp e_1^2(p,2))\simeq\aA(1\times p^2)$

\end{itemize}
The remaining possibility in Proposition 4 in \cite{Avetisyan2016} requires $\ad_{e_0}\mathbb{Q}^7\subset\ker\ad_{e_0}$ which does not hold (see Remark 3 in \cite{Avetisyan2016}).

Let us now proceed to the automorphisms. Our algebra has a decomposition
$$
\aA(1\times p^2,1\times q^1)=\mathbf{L}_0\oplus\mathbf{W},
$$
$$
\mathbf{L}_0=\mathbb{Q}e_0\rtimes(\Fp e_1^1(p,2)\oplus\Fp e_1^2(p,2))=\aA(1\times p^2),\quad\mathbf{W}=\mathbb{Q}e_1^1(q,1).
$$
Let us deal with $\mathbf{L}_0$ first. Since $\Dil(\aleph)=\{1\}$, by Proposition \ref{AutaA} we establish that
$$
\Aut(\aA(1\times p^2))=\left\{\begin{pmatrix}[c|ccc|ccc]
1 & 0 & 0 & 0 & 0 & 0 & 0\\\hline
\gamma_1 & \Delta_a & 2\Delta_c & 2\Delta_b & \Delta_d & 2\Delta_f & 2\Delta_e\\
\gamma_2 & \Delta_b & \Delta_a & 2\Delta_c & \Delta_e & \Delta_d & 2\Delta_f\\
\gamma_3 & \Delta_c & \Delta_b & \Delta_a & \Delta_f & \Delta_e & \Delta_d\\\hline
\gamma_4 & 0 & 0 & 0 & \Delta_a & 2\Delta_c & 2\Delta_b\\
\gamma_5 & 0 & 0 & 0 & \Delta_b & \Delta_a & 2\Delta_c\\
\gamma_6 & 0 & 0 & 0 & \Delta_c & \Delta_b & \Delta_a
\end{pmatrix}\right\},
$$

$$
\gamma_1,\,\gamma_2,\,\gamma_3,\,\gamma_4,\,\gamma_5,\,\gamma_6\in\mathbb{Q},
$$

$$
\begin{pmatrix}
\Delta_a & 2\Delta_c & 2\Delta_b\\
\Delta_b & \Delta_a & 2\Delta_c\\
\Delta_c & \Delta_b & \Delta_a
\end{pmatrix},\begin{pmatrix}
\Delta_d & 2\Delta_f & 2\Delta_e\\
\Delta_e & \Delta_d & 2\Delta_f\\
\Delta_f & \Delta_e & \Delta_d
\end{pmatrix}\in\Fp,
$$

$$
(\Delta_a)^2+(\Delta_b)^2+(\Delta_c)^2>0.
$$
Coming back to the direct sum $\aA(1\times p^2,1\times q^1)=\aA(1\times p^2)\oplus\mathbb{Q}$, \mbox{Proposition 7} and \mbox{Proposition 8} in \cite{Avetisyan2016} yield
$$
\Aut(\aA(1\times p^2,1\times q^1))=\left\{\begin{pmatrix}
\phi_{00} & \phi_{01}\\
\phi_{10} & \phi_{11}
\end{pmatrix}\right\},
$$
$$
\phi_{00}\in\Aut(\mathbf{L}_0),\quad\phi_{01}(\mathbf{W})\in\mathcal{Z}(\mathbf{L}_0)=0,\quad(\mathbf{L}_0)_{(1)}=\Fp e_1^1(p,2)\oplus\Fp e_1^2(p,2)\subset\ker\phi_{10},\quad\ker\phi_{11}=0.
$$
Thus
$$
\Aut(\aA(1\times p^2,1\times q^1))=\left\{\begin{pmatrix}[c|ccc|ccc|c]
1 & 0 & 0 & 0 & 0 & 0 & 0 & 0\\\hline
\gamma_1 & \Delta_a & 2\Delta_c & 2\Delta_b & \Delta_d & 2\Delta_f & 2\Delta_e & 0\\
\gamma_2 & \Delta_b & \Delta_a & 2\Delta_c & \Delta_e & \Delta_d & 2\Delta_f & 0\\
\gamma_3 & \Delta_c & \Delta_b & \Delta_a & \Delta_f & \Delta_e & \Delta_d & 0\\\hline
\gamma_4 & 0 & 0 & 0 & \Delta_a & 2\Delta_c & 2\Delta_b & 0\\
\gamma_5 & 0 & 0 & 0 & \Delta_b & \Delta_a & 2\Delta_c & 0\\
\gamma_6 & 0 & 0 & 0 & \Delta_c & \Delta_b & \Delta_a & 0\\\hline
\gamma_7 & 0 & 0 & 0 & 0 & 0 & 0 & \delta
\end{pmatrix}\right\},
$$

$$
\gamma_7,\delta\in\mathbb{Q},\quad\delta\neq0.
$$
In dealing with derivations let us again start with $\mathbf{L}_0=\aA(1\times p^2)$. We use Proposition \ref{DeraANonnil},
$$
\Der(\aA(1\times p^2))=\left\{\begin{pmatrix}[c|ccc|ccc]
0 & 0 & 0 & 0 & 0 & 0 & 0\\\hline
\gamma_1 & \Delta_a & 2\Delta_c & 2\Delta_b & \Delta_d & 2\Delta_f & 2\Delta_e\\
\gamma_2 & \Delta_b & \Delta_a & 2\Delta_c & \Delta_e & \Delta_d & 2\Delta_f\\
\gamma_3 & \Delta_c & \Delta_b & \Delta_a & \Delta_f & \Delta_e & \Delta_d\\\hline
\gamma_4 & 0 & 0 & 0 & \Delta_a & 2\Delta_c & 2\Delta_b\\
\gamma_5 & 0 & 0 & 0 & \Delta_b & \Delta_a & 2\Delta_c\\
\gamma_6 & 0 & 0 & 0 & \Delta_c & \Delta_b & \Delta_a
\end{pmatrix}\right\},
$$

$$
\gamma_1,\,\gamma_2,\,\gamma_3,\,\gamma_4,\,\gamma_5,\,\gamma_6\in\mathbb{Q},
$$

$$
\begin{pmatrix}
\Delta_a & 2\Delta_c & 2\Delta_b\\
\Delta_b & \Delta_a & 2\Delta_c\\
\Delta_c & \Delta_b & \Delta_a
\end{pmatrix},\begin{pmatrix}
\Delta_d & 2\Delta_f & 2\Delta_e\\
\Delta_e & \Delta_d & 2\Delta_f\\
\Delta_f & \Delta_e & \Delta_d
\end{pmatrix}\in\Fp.
$$
For the direct sum $\aA(1\times p^2,1\times q^1)=\aA(1\times p^2)\oplus\mathbb{Q}$ we apply Proposition 12 in \cite{Avetisyan2016}, which tells us that
$$
\Der(\aA(1\times p^2,1\times q^1))=\left\{\begin{pmatrix}
\phi_{00} & \phi_{01}\\
\phi_{10} & \phi_{11}
\end{pmatrix}\right\},
$$
$$
\phi_{00}\in\Der(\mathbf{L}_0),\quad\phi_{01}(\mathbf{W})\in\mathcal{Z}(\mathbf{L}_0)=0,\quad(\mathbf{L}_0)_{(1)}=\Fp e_1^1(p,2)\oplus\Fp e_1^2(p,2)\subset\ker\phi_{10}.
$$
Thus
$$
\Der(\aA(1\times p^2,1\times q^1))=\left\{\begin{pmatrix}[c|ccc|ccc|c]
0 & 0 & 0 & 0 & 0 & 0 & 0 & 0\\\hline
\gamma_1 & \Delta_a & 2\Delta_c & 2\Delta_b & \Delta_d & 2\Delta_f & 2\Delta_e & 0\\
\gamma_2 & \Delta_b & \Delta_a & 2\Delta_c & \Delta_e & \Delta_d & 2\Delta_f & 0\\
\gamma_3 & \Delta_c & \Delta_b & \Delta_a & \Delta_f & \Delta_e & \Delta_d & 0\\\hline
\gamma_4 & 0 & 0 & 0 & \Delta_a & 2\Delta_c & 2\Delta_b & 0\\
\gamma_5 & 0 & 0 & 0 & \Delta_b & \Delta_a & 2\Delta_c & 0\\
\gamma_6 & 0 & 0 & 0 & \Delta_c & \Delta_b & \Delta_a & 0\\\hline
\gamma_7 & 0 & 0 & 0 & 0 & 0 & 0 & \delta
\end{pmatrix}\right\},
$$

$$
\gamma_7,\delta\in\mathbb{Q}.
$$
Finally, quadratic Casimir elements are found from Proposition \ref{Cas2aA}. Since $(-1)\star q=q$ and
$$
\aleph((-1)\star p,.)=0,
$$ 
we see that only the $q;q$-block contributes to a Casimir element,
$$
\operatorname{A}=\begin{pmatrix}[c|ccc|ccc|c]
0 & 0 & 0 & 0 & 0 & 0 & 0 & 0\\\hline
0 & 0 & 0 & 0 & 0 & 0 & 0 & 0\\
0 & 0 & 0 & 0 & 0 & 0 & 0 & 0\\
0 & 0 & 0 & 0 & 0 & 0 & 0 & 0\\\hline
0 & 0 & 0 & 0 & 0 & 0 & 0 & 0\\
0 & 0 & 0 & 0 & 0 & 0 & 0 & 0\\
0 & 0 & 0 & 0 & 0 & 0 & 0 & 0\\\hline
0 & 0 & 0 & 0 & 0 & 0 & 0 & a
\end{pmatrix},\quad a\in\mathbb{Q}.
$$
If we denote $w=\mathfrak{h}(e_1^1(q,1))$ then the Casimir element is
$$
Q=aw^2.
$$








\end{document}